\theoremstyle{definition}
\newtheorem{theorem}{Theorem}[section]
\newtheorem{lemma}[theorem]{Lemma}
\newtheorem{proposition}[theorem]{Proposition}
\newtheorem{corollary}[theorem]{Corollary}
\newtheorem{remark}[theorem]{Remark}
\newtheorem{definition}[theorem]{Definition}
\newtheorem{example}[theorem]{Example}
\newtheorem{convention}[theorem]{Convention}
\newtheorem{notation}[theorem]{Notation}
\DeclareMathOperator{\mdim}{mdim}
\DeclareMathOperator{\supp}{supp}
\DeclareMathOperator{\Aut}{Aut}
\title{Large Subalgebras of Crossed Products of $C(X)$-Algebras}
\author{Xiaochun Fang}
\address{School of Mathematical sciences, Tongji University,
	Shanghai, 200092, China}
\curraddr{}
\email{xfang@tongji.edu.cn}
\thanks{}
\author{N.~C.\  Phillips}
\address{Department of Mathematics, University of Oregon,
	Eugene OR 97403-1222, USA}
\email{ncp@darkwing.uoregon.edu}
\thanks{}
\author{Junqi Yang*}
\address{School of Mathematical sciences, Tongji University,
	Shanghai, 200092, China}
\curraddr{}
\email{yangjq24@tongji.edu.cn}
\thanks{}
\date{}
\subjclass[2020]{Primary: 46L40, 46L55; Secondary: 46L35.}
\keywords{Large subalgebra, crossed product,
	C*-algebra, functional analysis.}
\begin{document}

\maketitle

\begin{abstract}
In this paper we construct large subalgebras of
crossed product C*-algebras
of noncommutative C*-dynamics from ideals.
We apply our results to study locally trivial
unital $C(X)$-algebras such as mapping tori.
\end{abstract}

\section{Introduction}

Large subalgebras were introduced in \cite{Phillips2014large}
by the second author.
They are an abstraction of Putnam's orbit breaking subalgebra of
the crossed product of the Cantor set by a minimal homeomorphism
in \cite{Putnam1989Cast}.
Putnam's construction plays a key role in work on the structure of
the crossed product C*-algebra.
Many results were obtained directly or implicitly by using
the spirit of Putnam's construction.
Here are some of them.
Let $X$ be an infinite compact metric space,
and let $h \colon X \to X$ be a minimal homeomorphism.
Lin and the second author \cite{LP2010crossed} proved that there exists
an orbit breaking subalgebra $C^* (\mathbb{Z}, X, h)_y$ for some $y \in X$
with tracial rank zero whenever
the image of $K_0$ group of $C^* (\mathbb{Z}, X, h)$ is dense in
$\mathrm{Aff}\big( \mathrm{T} (C^* (\mathbb{Z}, X, h)) \big)$ and
$X$ has finite covering dimension.
Then by the large subalgebra approximation,
they proved that $C^* (\mathbb{Z}, X, h)$ has tracial rank zero.
Elliott and Niu \cite{EN2017Cast} proved that if $(X,h)$ has
mean dimension zero, then $C^* (\mathbb{Z}, X, h)$ is $\mathcal{Z}$-stable.
Archey and the second author \cite{AP2020permanence} proved that
if there is a continuous surjection from $X$ to the Cantor set,
then $C^* (\mathbb{Z}, X, h)$ has stable rank one.
By using a sharp embedding theorem in \cite{GT2020embedding},
the second author acturally proved in \cite{Phillips2016Cast} that
the radius comparison of $C^* (\mathbb{Z}, X, h)$ is upper bounded
by $1 + 2 \mdim(X,h)$, where $\mdim(X, h)$ is the mean topological dimension
of $(X,h)$ introduced in~\cite{LW2000mean}.
In \cite{ABP2020structure}, Archey, Buck and the second author constructed
large subalgebras of crossed products of $C(X, D)$ in which
$D$ is simple unital and studied the recursive subhomogeneous (RSH)
structure of the large subalgebra.
In \cite{Suzuki2020almost} Suzuki developed a local version of the
large subalgebra argument to compute the stable rank of
certain groupoid C*-algebras.

Many properties can be inherited from large subalgebras and vice versa.
It is proved in \cite{Phillips2014large} that for an infinite dimensional
simple unital C*-algebra $A$,
its stably large subalgebra $B$ must be simple and infinite dimensional.
The second author also studied other properties such as stably finiteness,
pure infiniteness,
the restriction map between trace state spaces,
and the relationship between their radii comparison.
In \cite{AP2020permanence}, Archey and the second author defined
centrally large subalgebras and studied the property of stable rank one.
In \cite{ABP2018centrally}, Archey, Buck and the second author studied
the property of tracial $\mathcal{Z}$-absorption
when the containing algebra is stably finite.
Fan, Zhao and the first author \cite{FFZ2019comparison} investigated
the properties of $m$-comparison of positive elements and
strong tracial $m$-comparison of positive elements.
Local weak comparison \cite{ZFF2020permanence},
almost divisibility \cite{FFZ2021inheritance}
and inheritance of tracial nuclear dimension for
centrally large subalgebras \cite{ZFF2020inheritance} were also studied.

Large subalgebra methods have flaws.
It is not clear how to explicitly construct a large subalgebra
of the crossed product when the coefficient algebra is simple,
or when the group is not integers.
There are some competing approaches
for obtaining regularity results of crossed products.
Hirshberg, Winter and Zacharias \cite{HWZ2015rokhlin}
developed the concept of Rokhlin dimension
for integers and for finite group actions on C*-algebras,
which is a higher-dimensional version of the
% strict
Rokhlin property \cite{Izumi2004finite_I, Izumi2004finite_II}.
They established permanence of
finite nuclear dimensionality and $\mathcal{Z}$-stability
for crossed products with finite Rokhlin dimension with commuting towers.
This method works well on both commutative and simple C*-algebras.
Gardella \cite{Gardella2017rokhlin} generalized this notion to compact group actions;
in this setting, more permanence properties were studied
in \cite{GHS2021rokhlin} by Gardella, Hirshberg and Santiago.
A nonunital version is developed in \cite{HP2015rokhlin} by Hirshberg
and the second author;
they also find a K-theoretic obstruction for actions of finite groups
to have finite Rokhlin dimension.
Recently, they \cite{HP2023values}
showed that any finite group admits
actions on simple AF algebras with unique trace
which have arbitary large but finite Rokhlin dimension with commuting towers.
Szab{\'o}, Wu and Zacharias \cite{SWZ2019rokhlin} extended the concept of
Rokhlin dimension to actions of residually finite groups,
and established a relation between Rokhlin dimension and
amenability dimension \cite{GWY2017dynamic}.
There are other generalizations of the Rokhlin property;
see \cite{FG2020weak} for some recent work,
and \cite{OP2006stable, Wang2013tracial,
	HO2013tracially, Archey2008crossed, MS2012stability,
	Hua2010tracial, Buck2010crossed}
for a related direction.
%``tracial" versions
%\cite{OP2006stable, Wang2013tracial},
%``weak tracial" versions
%\cite{HO2013tracially, Archey2008crossed, MS2012stability, FG2020weak},
% ``weak tracial" versions with dynamical comparison
%\cite{Hua2010tracial, Buck2010crossed}.

In the commutative case, more tools have been developed.
Szab{\'o} \cite{Szabo2015rokhlin} shows that free $\mathbb{Z}^m$-actions
on finite-dimensional $X$ satisfy a strengthened version of
the marker property,
which implies finite Rokhlin dimensionality.
Kerr \cite{Kerr2020dimension} introduced the notion of almost finiteness
and showed that almost finiteness implies $\mathcal{Z}$-stability
for free minimal action of infinite amenable groups on compact metric spaces.
See \cite{KS2020almost, LT2022almost} for related topics.
Niu \cite{Niu2019comparison_I, Niu2019comparison_II}
introduced two properties, the uniform Rokhlin property
and Cuntz comparison of open sets,
for topological dynamical systems with countable discrete amenable groups.
He proved that these two properties hold for minimal free
topological dynamical systems with $\mathbb{Z}^d$ action,
and showed that with these two properties,
the comparison radius of $C^* (\mathbb{Z}^d, X)$
is at most half of the mean dimension for any minimal free action.
See \cite{Niu2021Zstability, LN2020stable} for applications
to $\mathcal{Z}$-stability and stable rank one.

In this paper, we follow the main idea in \cite{Phillips2014large}.
We try to replace $C_0 (X \setminus \{y\}) \subset C(X)$
by a closed ideal $J$ of a noncommutative C*-algebra $A$,
and construct a large subalgebra of $C^{\ast}(\mathbb{Z}, A, \alpha)$
in Section~\ref{sec_large_subalg}.
As in the commutative case, this subalgebra is supposed to be
the C*-subalgebra generated by $A$ and $Ju$,
where $u$ is the canonical unitary element with respect to $1 \in \mathbb{Z}$.
We apply our result to $C(X)$-algebras and mapping tori in
Section~\ref{bundle_section} and Section~\ref{mapping_torus_sec}.

\section{Preliminaries}

The following two lemmas are elementary and well-known,
so we omit the proof.

\begin{lemma}\label{ideal_lattice_lem}
Let $C$ be a C*-algebra, and let $I$, $J$ be closed ideals of $C$.
\begin{enumerate}
\item\label{ideal_lattice_lem_1}
The closed ideal generated by $I$ and $J$ is $I+J$.
\item\label{ideal_lattice_lem_2}
$I \cap J = \{ab \colon a \in I , b \in J\}$.
\end{enumerate}
\end{lemma}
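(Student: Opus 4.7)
The plan is to handle the two parts separately. For part~(1) I would verify that $I+J$ is itself a closed two-sided ideal, which suffices because $I+J$ is clearly contained in every closed ideal of $C$ that contains both $I$ and $J$. Algebraically $I+J$ is plainly a $*$-subspace absorbing products from $C$ on either side, so the only real content is norm closedness. For this I would use the quotient map $\pi : C \to C/J$: its restriction to $I$ is a $*$-homomorphism between C*-algebras, so it has closed range $\pi(I)$, and this range is moreover a two-sided ideal of $C/J$. Then $I+J = \pi^{-1}(\pi(I))$ is closed as the preimage of a closed set.

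For part~(2) the inclusion $\{ab : a \in I,\ b \in J\} \subseteq I \cap J$ is immediate from the two-sided ideal property. For the reverse inclusion, given $c \in I \cap J$ the task is to produce an explicit factorization $c = ab$ with $a \in I$ and $b \in J$. Since $c^{*}c \in J$ and the function $t \mapsto t^{1/4}$ vanishes at $0$, continuous functional calculus places $(c^{*}c)^{1/4} \in J$; I would take this as the factor $b = |c|^{1/2}$.

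The main obstacle is constructing $a \in I$ with $c = a \cdot |c|^{1/2}$: the natural candidate $v|c|^{1/2}$ coming from the polar decomposition $c = v|c|$ in $C^{**}$ involves a partial isometry $v$ that a priori lies only in the enveloping von Neumann algebra. My plan is to identify $v|c|^{1/2}$ with the norm limit of the sequence $c(c^{*}c + 1/n)^{-1/4}$: substituting $c = v|c|$ rewrites this as $v f_{n}(|c|)$ with $f_{n}(t) = t(t^{2} + 1/n)^{-1/4}$, and a direct estimate shows $f_{n}$ converges uniformly to $t^{1/2}$ on $[0, \|c\|]$. Each approximant equals $c$ times an element of the unitization of $C$ and so lies in $I$, so by closedness of $I$ the limit $a$ lies in $I$, and $a \cdot |c|^{1/2} = v|c| = c$ gives the required factorization. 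A slicker alternative would be to invoke Cohen's factorization theorem on the C*-algebra $I$, writing $c = ez$ with $e \in I$ and $z \in \overline{Ic} \subseteq J$, which sidesteps the polar decomposition entirely.
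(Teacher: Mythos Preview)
Your proof is correct. The paper itself omits the proof entirely, stating only that the lemma is ``elementary and well-known,'' so there is no argument in the paper to compare against. Your approach for part~(1) via the quotient map $\pi\colon C\to C/J$ and the closedness of $*$-homomorphic images is the standard one; your approach for part~(2) via the factorization $c=(v|c|^{1/2})\cdot |c|^{1/2}$ with the explicit approximation $c(c^{*}c+1/n)^{-1/4}\to v|c|^{1/2}$ is also standard and carried out correctly (the uniform convergence of $f_n(t)=t(t^2+1/n)^{-1/4}$ to $t^{1/2}$ on compact subsets of $[0,\infty)$ is an elementary estimate). The Cohen factorization alternative you mention is equally valid and is in fact closer in spirit to how such facts are usually cited; indeed, the paper later refers to \cite[Prop.~1.5.9]{Pedersen2018Cast} for the closely related Riesz decomposition property in the proof of Lemma~\ref{lem24}.
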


\begin{lemma}\label{unital_cpt_lem}
Suppose that $C$ is a unital C*-algebra.
Let $(I_n)_{n=1}^{\infty}$ be a sequence of closed ideals of $C$.
If the closed ideal generated by $\bigcup_{n=1}^{\infty} I_n$
is the whole algebra $C$,
then there exists $N \in \mathbb{Z}_{>0}$ such that
$\sum_{n=1}^{N} I_n = C$.
\end{lemma}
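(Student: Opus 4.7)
The plan is to combine Lemma~\ref{ideal_lattice_lem}(\ref{ideal_lattice_lem_1}) with the standard fact that, in a unital C*-algebra, any element sufficiently close to $1$ is invertible, and the observation that an ideal (two-sided, not necessarily closed) containing an invertible element must be the whole algebra.

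First I would identify the algebraic ideal generated by $\bigcup_{n=1}^{\infty} I_n$. Iterating Lemma~\ref{ideal_lattice_lem}(\ref{ideal_lattice_lem_1}), the (algebraic) ideal generated by $I_1 \cup \cdots \cup I_N$ is the finite sum $\sum_{n=1}^{N} I_n$, and these form an increasing sequence in $N$. Hence the algebraic ideal generated by $\bigcup_{n=1}^{\infty} I_n$ is $\bigcup_{N=1}^{\infty} \sum_{n=1}^{N} I_n$, and the closed ideal generated by $\bigcup_{n=1}^{\infty} I_n$ is the norm closure of this union. By hypothesis this closure is $C$, so in particular $1_C$ lies in it.

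Next, I would exploit the unit. Choose $N \in \mathbb{Z}_{>0}$ and $x \in \sum_{n=1}^{N} I_n$ with $\| 1_C - x \| < 1$. A standard Neumann series argument in the unital C*-algebra $C$ shows that $x$ is invertible. Since $\sum_{n=1}^{N} I_n$ is a two-sided ideal of $C$ containing the invertible element $x$, it contains $x \cdot x^{-1} = 1_C$, and therefore $\sum_{n=1}^{N} I_n = C$.

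There is no serious obstacle; the only thing to be careful about is distinguishing the algebraic ideal from its closure at the outset, so that the approximation by an element of some finite sum $\sum_{n=1}^{N} I_n$ is legitimate. Note that closedness of $\sum_{n=1}^{N} I_n$ (which is in fact automatic for a finite sum of closed ideals in a C*-algebra) is not needed for the argument, because once the sum contains an invertible element it is forced to equal $C$ on purely algebraic grounds.
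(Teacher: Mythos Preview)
Your proof is correct. The paper does not give its own proof of this lemma, explicitly omitting it as ``elementary and well-known,'' so there is nothing to compare against; your Neumann-series argument is exactly the standard one.
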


If $C$ is a C*-algebra,
we let $\Aut(C)$ denote the $*$-automorphism group of $C$.
We say that $C$ is $\alpha$-simple, or that $\alpha$ is minimal,
if the only $\alpha$-invariant closed ideals in $C$ are $\{0\}$ and $C$.
If $S \subset C$ is a subset,
we denote by $\langle S \rangle$ the closed ideal generated by $S$.

\begin{lemma}\label{minimal_equiv_lem}
Let $C$ be a unital C*-algebras and $\alpha \in \Aut(C)$.
The following are equivalent:
\begin{enumerate}
\item $C$ is $\alpha$-simple;
\item for any nonzero closed ideal $I$ of $C$, 
	if $I \subset \alpha (I)$, then $I = C$;
\item for any nonzero closed ideal $I$ of $C$,
	there exists $N \in \mathbb{Z}_{>0}$ such that
	\[
	\alpha^{-1} (I) + \alpha^{-2} (I) + \cdots + \alpha^{-N} (I) = C.
	\]
\end{enumerate}
\end{lemma}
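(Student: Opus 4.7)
The plan is to establish the cycle of implications (1) $\Rightarrow$ (2) $\Rightarrow$ (3) $\Rightarrow$ (1). In each of the first two steps the same device appears: from the given ideal $I$, build an auxiliary ideal $J$ out of the iterates $\alpha^{\pm n}(I)$, verify that $J$ is (sub)invariant under $\alpha$, and then use Lemma~\ref{unital_cpt_lem} to compress an a priori infinite union of ideals into a finite sum, which is legitimate because $C$ is unital.

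For (1) $\Rightarrow$ (2), I assume $I$ is a nonzero closed ideal with $I \subset \alpha(I)$. Iterating $\alpha$ produces the increasing chain $I \subset \alpha(I) \subset \alpha^2(I) \subset \cdots$, whose closed union $J := \overline{\bigcup_{n \geq 0} \alpha^n(I)}$ is nonzero and satisfies $\alpha(J) = J$. Minimality forces $J = C$, so Lemma~\ref{unital_cpt_lem}, applied to the family $\{\alpha^n(I)\}_{n \geq 0}$, yields $N \in \mathbb{Z}_{>0}$ with $\sum_{n=0}^{N} \alpha^n(I) = C$. Because the chain is increasing, this sum collapses to the single term $\alpha^N(I)$, so $\alpha^N(I) = C$ and consequently $I = C$.

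For (2) $\Rightarrow$ (3), given a nonzero closed ideal $I$, I put $J := \overline{\sum_{n \geq 1} \alpha^{-n}(I)}$. A direct computation gives $\alpha^{-1}(J) \subset J$, equivalently $J \subset \alpha(J)$, and $J$ is nonzero because it contains $\alpha^{-1}(I)$. Condition (2) then forces $J = C$, so Lemma~\ref{unital_cpt_lem} applied to $\{\alpha^{-n}(I)\}_{n \geq 1}$ produces the required $N$ with $\sum_{n=1}^{N} \alpha^{-n}(I) = C$.

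For (3) $\Rightarrow$ (1), any nonzero $\alpha$-invariant closed ideal $I$ satisfies $\alpha^{-n}(I) = I$ for every $n$, so the finite sum in (3) degenerates to $I$ itself, yielding $I = C$. The only step that calls for genuine care is the first implication, where one needs to verify honestly that $J$ is $\alpha$-invariant (not merely $\alpha$-subinvariant) and that the finite sum telescopes to a single $\alpha^N(I)$; both issues reduce to tracking how $\alpha$ acts on the ascending chain. The symmetric difficulty in (2) $\Rightarrow$ (3) is circumvented by choosing the negative iterates $\alpha^{-n}(I)$, which package the hypothesis of (2) most naturally.
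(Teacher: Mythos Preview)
Your proof is correct and essentially identical to the paper's. The only cosmetic difference is in (1)$\Rightarrow$(2): you argue directly that $J=C$ by $\alpha$-invariance and then collapse the finite sum to $\alpha^N(I)$, whereas the paper runs the same computation inside a proof by contradiction (assuming $I\neq C$, showing $J\neq C$, and then deriving a nontrivial $\alpha$-invariant ideal); the underlying mechanism---Lemma~\ref{unital_cpt_lem} plus the ascending chain $I\subset\alpha(I)\subset\cdots$---is the same in both.
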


\begin{proof}
(1)$\Rightarrow$(2).
Suppose by contradiction that $I$ is a nontrivial closed ideal
such that $\alpha^{-1}(I) \subset I$.
Let $J$ be the closed ideal generated
by $\bigcup_{n=0}^\infty \alpha^{n} (I)$.
We claim that $J$ is nontrivial.
In fact, if $J = C$, then we can apply Lemma \ref{unital_cpt_lem} to get
$C = J = \sum_{n=0}^N \alpha^n (I) = \alpha^N (I)$
for some $N \in \mathbb{Z}_{\geq 0}$.
It follows that $I = C$,
which is a contradiction.
This proves our claim.
Since $\alpha$ is a $*$-automorphism,
it is easy to see that for any subset $S \subset C$,
we have $\alpha(\langle S\rangle) = \langle \alpha(S)\rangle$.
So $\alpha(J) = \big\langle \bigcup_{n=1}^{\infty} \alpha^n (I)
	\big\rangle	\subset J$ and $\alpha^{-1}(J)
= \big\langle \alpha^{-1} (I)
	\cup \bigcup_{n=0}^{\infty} \alpha^{n} (I) \big\rangle \subset J$.
The $\alpha$-simplicity of $C$ implies that $C_1 = \{0\}$ or $C$,
which leads to the contradiction.
	
(2)$\Rightarrow$(3).
Let $I_1 = \big\langle \bigcup_{n=1}^\infty \alpha^{-n}(I) \big\rangle$.
According to Lemma~\ref{unital_cpt_lem} and Lemma~\ref{ideal_lattice_lem},
it suffices to show that $I_1 = C$,
which follows from $\alpha^{-1} (I_1)
= \big\langle \bigcup_{n=2}^\infty \alpha^{-n} (I)
	\big\rangle \subset I_1$.

(3)$\Rightarrow$(1).
Suppose that $I$ is a nonzero closed ideal of $C$ such that $\alpha(I)=I$.
Then $I = \alpha^{-1} (I) + \cdots + \alpha^{-N} (I) = C$ and so,
$\alpha$ is minimal.
\end{proof}

\begin{lemma}\label{lem24}
Let $C$ be a unital C*-algebra,
and let $I_1, I_2, \ldots , I_n$ be closed ideals of $C$.
If $I_1 + I_2 + \cdots + I_n = C$,
then there exists $r_1, r_2, \ldots, r_n \in C$ such that $r_k \in I_k$
and $0 \leq r_k \leq 1$ for $k = 1, 2, \ldots, n$,
and $r_1 + r_2 + \cdots + r_n =1$.
\end{lemma}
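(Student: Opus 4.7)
The plan is to produce the partition of unity in three stages: obtain an \emph{algebraic} decomposition of $1$ into pieces from the ideals, improve it to a decomposition by \emph{positive} elements, and then normalize to contractions that sum exactly to $1$.

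First, since $1 \in C = I_1 + I_2 + \cdots + I_n$, I write $1 = a_1 + a_2 + \cdots + a_n$ with $a_k \in I_k$. These elements need not be self-adjoint, but every closed (two-sided) ideal in a C*-algebra is automatically $*$-closed, so $b_k := \tfrac{1}{2}(a_k + a_k^*) \in I_k$. Taking self-adjoint parts on both sides of $1 = \sum a_k$ yields $b_1 + b_2 + \cdots + b_n = 1$ with each $b_k = b_k^*$ in $I_k$.

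Next, I would pass to positive parts via continuous functional calculus. Writing $b_k = b_k^+ - b_k^-$ with $b_k^{\pm}\geq 0$, both $b_k^+$ and $b_k^-$ are of the form $f(b_k)$ with $f$ continuous and $f(0)=0$, so they lie in the C*-subalgebra generated by $b_k$, hence in $I_k$. Summing gives
\[
\sum_{k=1}^{n} b_k^+ \;=\; 1 + \sum_{k=1}^{n} b_k^-,
\]
and because $\sum b_k^- \geq 0$, the element $h := \sum_{k=1}^n b_k^+$ satisfies $h \geq 1$. In particular $h$ is invertible in $C$, so $h^{-1/2}$ makes sense.

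Finally, set $r_k := h^{-1/2} b_k^+ h^{-1/2}$. Each $r_k$ is positive, and since $I_k$ is a two-sided ideal containing $b_k^+$, we have $r_k \in I_k$. A direct computation gives $\sum_k r_k = h^{-1/2}\bigl(\sum_k b_k^+\bigr) h^{-1/2} = h^{-1/2} h\, h^{-1/2} = 1$, and the inequality $0 \leq r_k \leq \sum_j r_j = 1$ is then automatic.

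The only potentially subtle step is the second one: verifying that the positive parts $b_k^{\pm}$ stay inside $I_k$. This is a standard application of the fact that closed ideals in C*-algebras are hereditary with respect to functional calculus of their self-adjoint elements vanishing at $0$, so I do not expect any real obstacle here. Everything else is elementary linear algebra together with the self-adjointness of closed C*-ideals.
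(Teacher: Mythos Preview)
Your argument is correct. It differs from the paper's approach: the paper reduces the claim to the identity $C_+ = (I_1)_+ + \cdots + (I_n)_+$ and then simply cites Pedersen's \emph{C*-Algebras and Their Automorphism Groups}, Proposition~1.5.9, which is proved there via a noncommutative Riesz decomposition property. Your route is more direct and entirely self-contained: you decompose $1$ algebraically, pass to self-adjoint and then positive parts (using only that closed ideals are $*$-closed and closed under functional calculus vanishing at $0$), and then conjugate by $h^{-1/2}$ to normalize. The Pedersen/Riesz approach actually yields the stronger statement that every positive element of $C$ decomposes as a sum of positives from the $I_k$, whereas your argument is tailored to the unit; but for the lemma as stated your method is shorter and avoids invoking any outside machinery.
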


\begin{proof}
It suffices to prove that $C_+ = (I_1)_+ + \cdots + (I_n)_+$.
One can find a proof in \cite[Prop.~1.5.9]{Pedersen2018Cast}
by using a noncommutative version of the Riesz decomposition property.
\end{proof}

Let $A$ be a C*-algebra and $\alpha \in \Aut(A)$.
We say that $A$ is $\alpha$-prime,
if every pair of nonzero $\alpha$-invariant closed ideals of $A$
has nonzero intersection.
The notion of $\alpha$-primeness appeared in \cite{Murphy1996crossed}.
From the definition we immediately see that
if $A$ is prime or $\alpha$-simple, then $A$ is $\alpha$-prime.
\begin{lemma}\label{alpha_prime_lem}
Let $A$ be a C*-algebra and $\alpha \in \Aut(A)$.
The following are equivalent:
\begin{enumerate}
	\item $A$ is $\alpha$-prime;
	\item for any pair of closed ideals $I$ and $J$,
		there exists $n \in \mathbb{Z}$ such that
		$I \cap \alpha^n(J) \ne \{0\}$;
	\item for any $x, y \in A \setminus \{0\}$,
		there exists $n \in \mathbb{Z}$ such that
		$y A \alpha^n (x) \ne \{0\}$.
\end{enumerate}
\end{lemma}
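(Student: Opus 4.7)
The plan is to establish the equivalence cyclically: (1)$\Rightarrow$(2)$\Rightarrow$(3)$\Rightarrow$(1). Condition (3) is obviously stronger looking than (2), so (3)$\Rightarrow$(1) should be trivial, while (1)$\Rightarrow$(2) will require passing to $\alpha$-invariant saturations.

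For (1)$\Rightarrow$(2), I would take nonzero closed ideals $I$ and $J$ and form the $\alpha$-invariant saturations
\[
\widetilde{I} = \overline{\textstyle\sum_{n \in \mathbb{Z}} \alpha^n (I)}, \qquad
\widetilde{J} = \overline{\textstyle\sum_{n \in \mathbb{Z}} \alpha^n (J)},
\]
which are nonzero $\alpha$-invariant closed ideals, so $\widetilde{I} \cap \widetilde{J} \neq \{0\}$ by $\alpha$-primeness. Arguing by contradiction, suppose $I \cap \alpha^n(J) = \{0\}$ for every $n \in \mathbb{Z}$. Applying $\alpha^m$ gives $\alpha^m(I) \cap \alpha^{m+n}(J) = \{0\}$ for all $m, n$, and then Lemma~\ref{ideal_lattice_lem}(\ref{ideal_lattice_lem_2}) forces $\alpha^m(I) \cdot \alpha^n(J) = \{0\}$ for every pair $m, n$. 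By bilinearity and continuity this yields $\widetilde{I} \cdot \widetilde{J} = \{0\}$, so $\widetilde{I} \cap \widetilde{J} = \overline{\widetilde{I} \widetilde{J}} = \{0\}$ by Lemma~\ref{ideal_lattice_lem}(\ref{ideal_lattice_lem_2}) again, a contradiction.

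For (2)$\Rightarrow$(3), given $x, y \in A \setminus \{0\}$, I would set $I = \overline{AyA}$ and $J = \overline{AxA}$, the nonzero closed ideals generated by $y$ and $x$. By (2), there exists $n$ with $I \cap \alpha^n(J) \neq \{0\}$. Assume for contradiction that $yA\alpha^n(x) = \{0\}$; then for any $a', a \in A$ one has $(a'ya)\alpha^n(x) = a'\bigl(ya\alpha^n(x)\bigr) = 0$, so $(AyA)\alpha^n(x) = 0$, and by continuity $I \cdot \alpha^n(x) = \{0\}$. Using that $I$ is a two-sided ideal, for any $b,c \in A$ and $u \in I$ we have $ub \in I$, hence $u b \alpha^n(x) c = (ub)\alpha^n(x) c = 0$. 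Therefore $I \cdot \alpha^n(J) = \{0\}$, and Lemma~\ref{ideal_lattice_lem}(\ref{ideal_lattice_lem_2}) gives $I \cap \alpha^n(J) = \{0\}$, a contradiction; so some $n$ must satisfy $yA\alpha^n(x) \neq \{0\}$.

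For (3)$\Rightarrow$(1), given nonzero $\alpha$-invariant closed ideals $I_1, I_2$, pick any $x \in I_1 \setminus \{0\}$ and $y \in I_2 \setminus \{0\}$. By (3), choose $n$ with $yA\alpha^n(x) \neq \{0\}$. Since $I_1$ is $\alpha$-invariant, $\alpha^n(x) \in I_1$, and then $yA\alpha^n(x) \subseteq I_2 \cap I_1$, showing $I_1 \cap I_2 \neq \{0\}$. The main conceptual step is the saturation argument in (1)$\Rightarrow$(2); the rest is routine ideal algebra once one keeps track of which ideal absorbs multiplication from which side.
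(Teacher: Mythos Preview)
Your proof is correct. You cycle through the implications in the opposite direction from the paper: the paper proves (1)$\Rightarrow$(3)$\Rightarrow$(2)$\Rightarrow$(1), whereas you prove (1)$\Rightarrow$(2)$\Rightarrow$(3)$\Rightarrow$(1). The substantive idea---passing to the $\alpha$-invariant saturation and using Lemma~\ref{ideal_lattice_lem}(\ref{ideal_lattice_lem_2}) to convert between ``zero intersection'' and ``zero product''---is the same in both, appearing in the paper's (1)$\Rightarrow$(3) and in your (1)$\Rightarrow$(2). The paper's ordering is slightly more economical: its (3)$\Rightarrow$(2) is a one-liner (pick $y\in I$, $x\in J$, apply (3), and observe $yz\alpha^n(x)\in I\cap\alpha^n(J)$), while your (2)$\Rightarrow$(3) needs a second contradiction argument to descend from ideals back to elements. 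Both routes are entirely valid.
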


\begin{proof}
(1)$\Rightarrow$(3).
Suppose by contradiction that there exists $x,y \in A \setminus \{0\}$
such that $yA\alpha^n(x) = \{0\}$ for all $n \in \mathbb{Z}$.
Then for any $m, n \in \mathbb{Z}$,
the algebraic ideal generated by $\alpha^m(y)$ is orthogonal to
the one generated by $\alpha^n(x)$.
For $z \in A$, let \[
I_z^\alpha = \Bigg\{ \sum_{j=1}^N a_j \alpha^{n_j}(z)b_j \colon
	N \in \mathbb{Z}_{>0} , \, a_j , b_j \in A , \,
	n_j \in \mathbb{Z} \Bigg\} \,.
\]
Then $\overline{I_z^\alpha}$ is the $\alpha$-invariant closed ideal
generated by $z$.
We get $\overline{I_y^\alpha} \cap \overline{I_x^\alpha} = \{0\}$
by Lemma~\ref{ideal_lattice_lem} (\ref{ideal_lattice_lem_2}),
which contradicts to the definition of $\alpha$-prime.

(2)$\Rightarrow$(1).
Trivial.
	
(3)$\Rightarrow$(2).
Taking any nonzero $y \in I$ and nonzero $x \in J$,
we get $n \in \mathbb{Z}$ and $z \in A$
such that $y z \alpha^n(x) \ne 0$.
So $y z \alpha^n(x) \in I \cap \alpha^n(J) \ne \{0\}$.
\end{proof}

\begin{lemma}\label{lem_JJ}
Let $I_1 , I_2 , \ldots , I_n$ and $J$ be closed ideals
of a unital C*-algebra $C$.
If $J + I_k = C$ for all $k=1, 2, \ldots, n$,
then $J + \bigcap_{k=1}^n I_k = C$.
\end{lemma}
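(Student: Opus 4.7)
The plan is to prove this by induction on $n$, reducing to the $n=2$ case which is essentially a coprimality argument analogous to the classical Chinese Remainder Theorem for rings.

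For the base case $n=2$, I would start from the hypotheses $J + I_1 = C$ and $J + I_2 = C$ and pick decompositions of the unit: $1 = j_1 + a_1$ with $j_1 \in J$, $a_1 \in I_1$, and $1 = j_2 + a_2$ with $j_2 \in J$, $a_2 \in I_2$. Then I would multiply these expressions and expand:
\[
1 = (j_1 + a_1)(j_2 + a_2) = j_1 j_2 + j_1 a_2 + a_1 j_2 + a_1 a_2.
\]
The first three summands lie in $J$ since $J$ is a two-sided ideal, while the last summand $a_1 a_2$ lies in $I_1 \cap I_2$; indeed it lies in $I_1$ because $a_1 \in I_1$, and in $I_2$ because $a_2 \in I_2$. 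Hence $1 \in J + (I_1 \cap I_2)$, so $J + (I_1 \cap I_2) = C$.

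For the inductive step, assuming the result holds for $n-1$, I would apply the inductive hypothesis to obtain $J + \bigcap_{k=1}^{n-1} I_k = C$, and then apply the $n=2$ case to the two ideals $I' = \bigcap_{k=1}^{n-1} I_k$ and $I_n$ (noting that $J + I' = C$ and $J + I_n = C$) to conclude $J + \bigcap_{k=1}^{n} I_k = C$.

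I do not expect any serious obstacle here: the only mildly nontrivial ingredient is that the product of elements from two ideals lies in their intersection, which is covered by Lemma \ref{ideal_lattice_lem}(\ref{ideal_lattice_lem_2}). The algebra works identically in the noncommutative setting because we only ever multiply on the left by something in $I_1$ or on the right by something in $I_2$; the existence of a unit is used to write $1$ as a sum of the requisite ideal elements.
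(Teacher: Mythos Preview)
Your proof is correct and uses essentially the same idea as the paper: pick decompositions $1 = (1 - x_k) + x_k$ with $x_k \in I_k$ and $1 - x_k \in J$, then multiply. The only cosmetic difference is that the paper handles all $n$ ideals at once via the telescoping identity
\[
1 - x_1 x_2 \cdots x_n = (1 - x_1) + x_1(1 - x_2) + \cdots + x_1 \cdots x_{n-1}(1 - x_n) \in J,
\]
whereas you package the same computation as an induction reducing to the $n = 2$ case.
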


\begin{proof}
Choose $x_k \in I_k$ for $k = 1, 2, \ldots, n$ such that
$1 - x_k \in J$.
Set $c = x_1 x_2 \cdots x_n$.
Then we have $c \in \bigcap_{k=1}^n I_k$
and $1 - c = (1 - x_1) + x_1 (1-x_2) + \cdots 
	+ x_1 x_2 \cdots x_{n-1} (1-x_n) \in J$.
\end{proof}

Let $A$ be a $\mathrm{C}^*$-algebra, and let $a,b \in A_+$.
Recall that $a$ is Cuntz subequivalent to $b$,
denoted by $a \precsim_A b$,
if there exists a sequences $(v_n)_{n=1}^{\infty}$ in $A$
such that $\| v_n b v_n^* - a\| \to 0$.
If $a \precsim_A b$ and $b \precsim_A a$,
we say that $a$ is Cuntz equivalent to $b$ and written by $a \sim_A b$.
Regard $M_n(A)$ as the upper left conner of $M_{n+1}(A)$,
and denote by $M_{\infty} (A)$ the algebraic direct limit
of the system $(M_n(A))_{n=1}^\infty$.
The Cuntz subequivalence can be defined similarly for
positive elements in $M_\infty(A)$;
see \cite[Def.~1.1]{Phillips2014large}.

The notions of large subalgebras, stably large subalgebras
and large subalgebras of crossed product type were introduced
in \cite[Sec.~4]{Phillips2014large}.
It was proved in \cite[Thm.~3.6]{AP2020permanence} that
large subalgebras of crossed product type of
stably finite C*-algebras are acturally centrally large.

\begin{definition}[Def.~4.9 of \cite{Phillips2014large}]
Let $D$ be an infinite dimensional simple separable unital $C^*$-algebra.
A C*-subalgebra $B \subset D$ is said to be
\emph{a large subalgebra of crossed product type}
if there exist a C*-subalgebra $A \subset B$
and a subset $G$ of the unitary group of $D$
such that:
\begin{enumerate}
	\item \begin{enumerate}
		\item $A$ contains the identity of $D$.
		\item $A$ and $G$ generate $D$ as a C*-algebra.
		\item $uAu^* \subset A$ and $u^* Au \subset A$ for all $u \in G$.
	\end{enumerate}
	\item For every $m \in \mathbb{Z}_{>0}$,
		$a_1 , \, a_2 , \, \ldots , \, a_m \in D$,
		$\epsilon>0$, $x \in D_+$ with $\|x\| = 1$,
		and $y \in B_+ \setminus \{0\}$,
		there exist $c_1 , \, c_2 , \, \ldots , \, c_m \in D$ and
		$g \in A$ such that:
	\begin{enumerate}
		\item $0 \leq g \leq 1$.
		\item For $j = 1,2,\ldots,m$
			we have $\|c_j - a_j\| < \epsilon$.
		\item For $j=1,2,\ldots,m$ we have $(1-g)c_j \in B$.
		\item $g \precsim_{B} y$ and $g \precsim_{D} y$.
		\item $\|(1-g)x(1-g)\| > 1 - \epsilon$.
	\end{enumerate}
\end{enumerate}
\end{definition}

The following lemma is well known and
a useful toolchain when dealing with the Cuntz subequivalence.
Lemma 1.4 of \cite{Phillips2014large} provides a guide
to (\ref{cuntz_comp_lem_1})-(\ref{cuntz_comp_lem_7}),
and the rest is from \cite[Cor.~1.6, Lem.~1.7]{Phillips2014large}.

\begin{lemma}\label{cuntz_comp_lem}
Let $A$ be a $\mathrm{C}^*$-algebra.
\begin{enumerate}
\item\label{cuntz_comp_lem_1}
	Let $a, b \in A_+$.
	If $a \in \overline{bAb}$, then $a \precsim_A b$.
\item\label{cuntz_comp_lem_2}
	Let $c \in A$. Then $c^* c \sim_A c c^*$.
\item\label{cuntz_comp_lem_3}
	Let $c \in A$ and let $\alpha > 0$.
	Then $(c^* c - \alpha)_+ \sim_A (c c^* - \alpha)_+$.
\item\label{cuntz_comp_lem_4}
	Let $a,b \in A_+$ and let $\epsilon > 0$.
	Then $\| a - b \| < \epsilon$
	implies $(a - \epsilon)_+ \precsim_A b$.
\item\label{cuntz_comp_lem_5}
	Let $a, b \in A_+$.
	Then $a + b \precsim_A a \oplus b$.
\item\label{cuntz_comp_lem_6}
	Let $a, b \in A_+$ and $ab = 0$.
	Then $a + b \sim_A a \oplus b$.
\item\label{cuntz_comp_lem_7}
	Let $a_1, \, a_2, \, b_1, \, b_2 \in A_+$,
	and suppose that $a_1 \precsim_A a_2$
	and $b_1 \precsim_A b_2$.
	Then $a_1 \oplus b_1 \precsim_A a_2 \oplus b_2$.
\item\label{cuntz_comp_lem_8}
	Let $\epsilon > 0$ and $\lambda \geq 0$.
	Let $a, b \in A$ satisfy $\| a - b\| < \epsilon$.
	Then $(a - \lambda - \epsilon)_+ \precsim_A (b - \lambda)_+$.
\item\label{cuntz_comp_lem_9}
	Let $a,b \in A$ satisfy $0 \leq a \leq b$.
	Let $\epsilon > 0$.
	Then $(a-\epsilon)_+ \precsim_A (b-\epsilon)_+$.
\end{enumerate}
\end{lemma}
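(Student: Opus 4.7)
The plan is to reduce all nine items to two workhorse ingredients: the polar-decomposition identity $c \cdot h(c^*c) = h(cc^*) \cdot c$, valid for any continuous $h$ on $[0, \|c\|^2]$ with $h(0) = 0$ (proved for polynomials and extended by Stone--Weierstrass); and R\o rdam's approximation lemma, item (\ref{cuntz_comp_lem_4}).

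I would start with (\ref{cuntz_comp_lem_2}): setting $g(t) = (t-\epsilon)_+^{1/2}/t$ for $t>0$ and $g(0)=0$ (continuous on $[0,\infty)$), the element $x_\epsilon = c^* g(cc^*)$ satisfies $x_\epsilon (cc^*) x_\epsilon^* = (c^*c - \epsilon)_+$ after a direct computation with the polar identity; letting $\epsilon \to 0$ yields $c^*c \precsim_A cc^*$, and symmetry gives the reverse. Item (\ref{cuntz_comp_lem_3}) then follows by applying (\ref{cuntz_comp_lem_2}) to $c' = c \, f(c^*c)$ with $f(t) = (t-\alpha)_+^{1/2}/t^{1/2}$, which computes to $(c')^* c' = (c^*c - \alpha)_+$ and $c'(c')^* = (cc^* - \alpha)_+$. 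For (\ref{cuntz_comp_lem_4}), the standard construction produces $d \in A$ with $d^* b d \approx (a-\epsilon)_+$ by using a continuous functional-calculus cutoff on $a$ and the bound $\|a - b\| < \epsilon$; items (\ref{cuntz_comp_lem_8}) and (\ref{cuntz_comp_lem_9}) follow from the same technique applied to the shifted elements $(a - \lambda)_+$ and $(b - \lambda)_+$.

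The remaining items are essentially combinatorial. For (\ref{cuntz_comp_lem_1}), I would use that $a \in \overline{bAb}$ implies $a = \lim_n b^{1/n} a \, b^{1/n}$; then $(a - \epsilon)_+ \precsim_A b^{1/n} a b^{1/n}$ by (\ref{cuntz_comp_lem_4}), and this is Cuntz-equivalent via (\ref{cuntz_comp_lem_2}) to $a^{1/2} b^{2/n} a^{1/2} \leq \|a\| \cdot b^{2/n}$, which is dominated by $b$. For (\ref{cuntz_comp_lem_5}), take $v_\lambda = (e_\lambda, e_\lambda) \in M_{1,2}(A)$ with $e_\lambda$ an approximate identity for a hereditary subalgebra containing both $a$ and $b$: then $v_\lambda \, (a \oplus b) \, v_\lambda^* \to a + b$. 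For (\ref{cuntz_comp_lem_6}), orthogonality $ab = 0$ permits a unitary block rotation in $M_2(\widetilde A)$ conjugating $a \oplus b$ to the diagonal embedding of $a + b$, supplying the reverse inequality. Item (\ref{cuntz_comp_lem_7}) is immediate by block-diagonalizing the intertwiners witnessing $a_1 \precsim_A a_2$ and $b_1 \precsim_A b_2$.

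The main technical subtlety lies in (\ref{cuntz_comp_lem_2})--(\ref{cuntz_comp_lem_3}): one has to design the functional-calculus functions $f$ and $g$ to vanish at $0$ at just the right rate, so that $f(c^*c)$ and $g(cc^*)$ belong to $A$ without requiring that $c$ have closed range (equivalently, so that no genuine inverse of $c^*c$ is needed). Once that regularization is in place, the rest is bookkeeping, and since the paper explicitly defers these items to \cite[Sec.~1]{Phillips2014large} and \cite[Cor.~1.6, Lem.~1.7]{Phillips2014large}, the proof in the text will reduce to citing those references.
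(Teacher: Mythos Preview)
Your final sentence is exactly right: the paper gives no proof of this lemma at all. The text immediately preceding the statement reads ``Lemma 1.4 of \cite{Phillips2014large} provides a guide to (\ref{cuntz_comp_lem_1})--(\ref{cuntz_comp_lem_7}), and the rest is from \cite[Cor.~1.6, Lem.~1.7]{Phillips2014large},'' and that is the entire argument. So your sketch already goes well beyond what the paper offers, and your anticipation of the paper's approach is correct.

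That said, there is one genuine slip in your sketch for (\ref{cuntz_comp_lem_1}). The inequality $a^{1/2} b^{2/n} a^{1/2} \leq \|a\| \, b^{2/n}$ is false in general (try $a$ a rank-one projection and $b^{2/n}$ a rank-one projection onto a non-orthogonal line in $M_2$), and even when an inequality $0 \leq x \leq y$ does hold it does not imply $x \precsim_A y$ without further hypotheses --- that is precisely why (\ref{cuntz_comp_lem_9}) only claims the cut-down version. The clean fix stays closer to your approximate-identity idea: choose continuous $f_\lambda \colon [0,\infty) \to [0,1]$ vanishing on a neighborhood of $0$ with $f_\lambda(b) \to 1$ strictly on $\overline{bAb}$, set $g_\lambda(t) = (f_\lambda(t)/t)^{1/2}$ (continuous since $f_\lambda$ vanishes near $0$), and take $v_\lambda = a^{1/2} g_\lambda(b)$. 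Then $v_\lambda \, b \, v_\lambda^* = a^{1/2} f_\lambda(b) a^{1/2} \to a$, which gives $a \precsim_A b$ directly without any order comparison. The remaining items in your outline are fine.
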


The following lemma is also frequently used.
It is given in \cite[Cor.~2.5]{Phillips2014large};
see also \cite[Lem.~4.7]{Niu2019comparison_I} for another proof.

\begin{lemma}\label{cor_2_5}
Let $A$ be a simple C*-algebra which is not of type I.
Let $a \in A_+ \setminus \{0\}$,
and let $l \in \mathbb{Z}_{>0}$.
Then there exists $b_1 , \, b_2 , \, \ldots , \, b_l \in A_+ \setminus\{0\}$
such that $b_1 \sim_A b_2 \sim_A \cdots \sim_A b_l$,
such that $b_j b_k = 0$ for $j \ne k$,
and such that $b_1 + b_2 + \cdots + b_l \in \overline{aAa}$.
\end{lemma}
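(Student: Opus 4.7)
The plan is to work inside the hereditary subalgebra $B := \overline{aAa}$ of $A$. Since hereditary subalgebras of simple C*-algebras are simple, and type I is inherited by hereditary subalgebras, $B$ is simple and not of type I. It therefore suffices to find pairwise orthogonal, mutually Cuntz equivalent nonzero positive elements $b_1,\ldots,b_l$ inside $B$; their sum will automatically lie in $B \subset \overline{aAa}$.

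The key step is a halving lemma inside any simple non-type-I C*-algebra $C$: for every nonzero $c \in C_+$, there exist nonzero orthogonal $e_1, e_2 \in \overline{cCc}_+$ with $e_1 \sim_C e_2$. To establish this, I would produce a nonzero nilpotent $v \in \overline{cCc}$ with $v^2 = 0$. Existence of such a nilpotent follows from the standard fact that a C*-algebra containing no nonzero nilpotent must be commutative, combined with the observation that no nonzero hereditary subalgebra of a simple non-type-I C*-algebra is commutative (the only simple commutative C*-algebra is $\mathbb{C}$, which would force $C$ to be of type I). Setting $e_1 := v^*v$ and $e_2 := vv^*$, Lemma~\ref{cuntz_comp_lem}(\ref{cuntz_comp_lem_2}) gives $e_1 \sim_C e_2$, and orthogonality follows from $e_1 e_2 = v^*(v^2)v^* = 0$.

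To finish, I would iterate the halving with a transport step designed to preserve mutual Cuntz equivalence across branches. First halve $a$ in $B$ with nilpotent witness $v_0$, producing orthogonal $c_1 = v_0^* v_0$ and $c_2 = v_0 v_0^*$ with $c_1 \sim_A c_2$. Then halve $c_1$ inside $\overline{c_1 B c_1}$ to obtain orthogonal Cuntz equivalent $d_1, d_2 \in \overline{c_1 B c_1}_+$, and transport through $v_0$ by setting $e_i := v_0 d_i v_0^*$, which lie in $\overline{c_2 B c_2}$. Applying Lemma~\ref{cuntz_comp_lem}(\ref{cuntz_comp_lem_2}) to $v_0 d_i^{1/2}$ yields $e_i \sim_A d_i^{1/2} c_1 d_i^{1/2}$, and since $c_1$ serves as a strictly positive element on $\overline{c_1 B c_1} \supset \overline{d_i B d_i}$, one deduces $e_i \sim_A d_i$. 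This gives four pairwise orthogonal, mutually Cuntz equivalent positive elements. Repeating $k$ rounds produces $2^k$ such elements, and any $l$ of them (for $k$ with $2^k \geq l$) yield the required $b_1, \ldots, b_l$.

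The main obstacle is the transport step that certifies $e_i \sim_A d_i$: the Cuntz equivalence witness $v_0$ is only nilpotent, not a genuine partial isometry in $A$, so one cannot directly conjugate. The argument will be completed using the cutoff and approximation tools in Lemma~\ref{cuntz_comp_lem} (items (\ref{cuntz_comp_lem_4}), (\ref{cuntz_comp_lem_8}), (\ref{cuntz_comp_lem_9})) applied to $(c_1 - \varepsilon)_+$ together with $d_i \in \overline{c_1 B c_1}$, passing to the limit $\varepsilon \to 0$ to recover genuine Cuntz equivalence between $d_i$ and its transported image.
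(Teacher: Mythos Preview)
The paper does not prove this lemma; it simply cites \cite[Cor.~2.5]{Phillips2014large} and \cite[Lem.~4.7]{Niu2019comparison_I}. The standard route there is not iterated halving but a single embedding: by Glimm's lemma one places a nonzero $*$-homomorphic copy of $C_0((0,1])\otimes M_l$ inside $\overline{aAa}$, and the images of $t\otimes e_{jj}$ give the required $b_1,\ldots,b_l$ in one stroke.

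Your iterative scheme has a genuine gap, and it is different from the Cuntz-equivalence obstacle you flag. The transported elements $e_i = v_0 d_i v_0^*$ need not be orthogonal to each other:
\[
e_1 e_2 \;=\; v_0\, d_1 (v_0^* v_0) d_2\, v_0^* \;=\; v_0\,(d_1 c_1 d_2)\, v_0^*,
\]
and $d_1 \perp d_2$ inside $\overline{c_1 A c_1}$ does \emph{not} force $d_1 c_1 d_2 = 0$ when $c_1$ is only strictly positive rather than a unit there. A concrete failure: in $A=M_4$, take $v_0 = e_{31}+e_{41}+e_{42}$ (so $v_0^2=0$ and $c_1 = 2e_{11}+e_{12}+e_{21}+e_{22}$), and $d_1=e_{11}$, $d_2=e_{22}$. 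Then $e_1 = e_{33}+e_{34}+e_{43}+e_{44}$, $e_2 = e_{44}$, and $e_1 e_2 = e_{34}+e_{44}\neq 0$. Ironically, the step you worried about is actually fine: $e_i=(v_0 d_i^{1/2})(v_0 d_i^{1/2})^* \sim_A d_i^{1/2} c_1 d_i^{1/2}$, and the latter is strictly positive in $\overline{d_i A d_i}$, hence Cuntz equivalent to $d_i$. But without orthogonality of the transported pair the induction collapses, and there is no evident repair short of passing to the polar partial isometry of $v_0$ in $A^{**}$, which leaves $A$.
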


% section 3
\section{Large subalgebras}\label{sec_large_subalg}

\begin{convention}\label{convention_coefficient_ideal}
Let $A$ be a C*-algebra, and let $J \subset A$ be a closed ideal.
Let $u$ denote the canonical unitary in $C^* (\mathbb{Z}, A, \alpha)$
such that $uf u^* = \alpha(f)$ for all $f \in A$.
Denote by $C_c(\mathbb{Z}, A, \alpha)$ the dense $*$-subalgebra
of $C^* (\mathbb{Z}, A, \alpha)$ given by all elements
of the form $\sum_{k=-N}^N f_k u^k$,
where $N \in \mathbb{Z}_{\geq 0}$ and $f_k \in A$ for all $k$.
Denote by $C^{\ast}(\mathbb{Z}, A, \alpha)_J$ the C*-subalgebra
of $C^* (\mathbb{Z}, A, \alpha)$ generated by $A$ and $Ju$.
Let $E \colon C^* (\mathbb{Z}, A, \alpha) \to A$ be the standard
condition expectation onto $A$.
For $n \in \mathbb{Z}$, set \[
J_n = \begin{cases}
	\bigcap_{i=0}^{n-1} \alpha^i (J) \qquad & n>0 \\
	A \qquad & n=0 \\
	\bigcap_{i=1}^{|n|} \alpha^{-i} (J) \qquad & n<0 .
\end{cases}
\]
\end{convention}

\begin{proposition}\label{piece_1}
Adopt Convention~\ref{convention_coefficient_ideal}.
Then \[
	C^* (\mathbb{Z}, A, \alpha)_J
		= \big\{ x \in C^* (\mathbb{Z}, A, \alpha)
		\colon E(xu^{-n}) \in J_n \enspace
		\text{for all} \enspace n \in \mathbb{Z} \big\}
	\]
and
$C^* (\mathbb{Z}, A, \alpha)_J \cap C_c(\mathbb{Z}, A, \alpha)$
is dense in $C^* (\mathbb{Z}, A, \alpha)_J$.
\end{proposition}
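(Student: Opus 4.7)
The plan is to set $B = C^*(\mathbb{Z}, A, \alpha)_J$ and
\[
	\widetilde{B} = \bigl\{ x \in C^* (\mathbb{Z}, A, \alpha)
	\colon E(xu^{-n}) \in J_n \text{ for all } n \in \mathbb{Z} \bigr\},
\]
and prove $B = \widetilde{B}$; the density statement will fall out of the same approximation argument. First I would verify that $\widetilde{B}$ is a closed $*$-subalgebra. Closedness and linearity are immediate from continuity of $E$ and closedness of each $J_n$. For the $*$-operation, a direct computation gives $E(x^* u^{-n}) = \alpha^n(E(xu^n)^*)$, so $*$-invariance reduces to the identity $\alpha^n(J_{-n}) = J_n$, which follows by reindexing the intersection defining $J_{-n}$. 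Multiplicative closure is first checked on Laurent polynomials $x = \sum_k f_k u^k$ and $y = \sum_l g_l u^l$ with $f_k \in J_k$ and $g_l \in J_l$: the coefficient of $u^n$ in $xy$ is $\sum_{k+l=n} f_k \alpha^k(g_l)$, and here one needs the key containment $J_k \cdot \alpha^k(J_l) \subset J_{k+l}$, which reduces to comparing the lists of powers of $\alpha$ appearing in the two intersections. The general case then follows by density, which I establish next. Once $\widetilde{B}$ is a C*-subalgebra, the containment $B \subset \widetilde{B}$ is immediate: $A \subset \widetilde{B}$ trivially, and the only nonzero Fourier coefficient of $fu$ with $f \in J$ sits in degree $1$ and equals $f \in J = J_1$.

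For the reverse containment, I would use Ces\`aro means, obtained by integrating against the Fej\'er kernel under the dual circle action $\beta_t(fu^n) = t^n f u^n$ of $\mathbb{T}$ (which is strongly continuous on $C^*(\mathbb{Z}, A, \alpha)$). Concretely,
\[
	\sigma_N(x) = \sum_{n=-N}^{N} \left(1 - \frac{|n|}{N+1}\right) E(xu^{-n}) u^n;
\]
the map $\sigma_N$ is contractive, and $\sigma_N(y) \to y$ for every Laurent polynomial $y \in C_c(\mathbb{Z}, A, \alpha)$ (this is just Fej\'er summation of a finite sum), so a standard $\varepsilon/3$ argument gives $\sigma_N(x) \to x$ in norm for every $x \in C^*(\mathbb{Z}, A, \alpha)$. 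If $x \in \widetilde{B}$, then each coefficient of $\sigma_N(x)$ sits in the appropriate $J_n$ (since $J_n$ is closed under scalar multiplication), so $\sigma_N(x) \in C_c(\mathbb{Z}, A, \alpha) \cap \widetilde{B}$. It therefore suffices to show $C_c(\mathbb{Z}, A, \alpha) \cap \widetilde{B} \subset B$; this will simultaneously establish the density claim.

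For this, it is enough to treat an individual monomial $c_n u^n$ with $c_n \in J_n$. For $n > 0$, I would apply Lemma~\ref{ideal_lattice_lem}(\ref{ideal_lattice_lem_2}) inductively to factor $c_n = e_0 \, \alpha(e_1) \cdots \alpha^{n-1}(e_{n-1})$ with each $e_i \in J$; then, using $\alpha^i(e_i) = u^i e_i u^{-i}$ and telescoping,
\[
	c_n u^n = (e_0 u)(e_1 u) \cdots (e_{n-1} u) \in B,
\]
since each $e_i u \in Ju \subset B$. The case $n < 0$ is handled symmetrically via products of elements of $u^{-1}J$. The main obstacle I anticipate is the careful combinatorial bookkeeping of how the intersections $J_n$ interact under $\alpha$ and multiplication, but all three key identities (the computation $\alpha^n(J_{-n}) = J_n$, the containment $J_k \cdot \alpha^k(J_l) \subset J_{k+l}$, and the factorization of $c_n$ as a product) reduce to iterated applications of Lemma~\ref{ideal_lattice_lem}(\ref{ideal_lattice_lem_2}) together with the characterization $I \cap J = IJ$ that it provides.
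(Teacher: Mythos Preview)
Your proposal is correct and follows essentially the same route as the paper: both arguments identify the Fourier-coefficient description as a closed $*$-subalgebra via the identities $\alpha^{n}(J_{-n}) = J_n$ and $J_k \cdot \alpha^k(J_l) \subset J_{k+l}$, use Fej\'er/Ces\`aro approximation to reduce to Laurent polynomials, and then show each monomial $c_n u^n$ with $c_n \in J_n$ lies in the generated subalgebra. The only noteworthy difference is in this last factorization step: the paper reduces to positive $b \in J_n$, sets $c = b^{1/n}$, and writes $bu^n = (cu)(\alpha^{-1}(c)u)\cdots(\alpha^{-(n-1)}(c)u)$, exploiting that $c \in J_n$ forces each $\alpha^{-k}(c) \in J$; you instead invoke Lemma~\ref{ideal_lattice_lem}(\ref{ideal_lattice_lem_2}) to factor $c_n$ as a product $e_0\,\alpha(e_1)\cdots\alpha^{n-1}(e_{n-1})$ with $e_i \in J$. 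Both devices are standard and equally short; the paper's root trick avoids the inductive appeal to $I \cap J = IJ$, while your approach avoids the reduction to positive elements.
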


\begin{proof}
Let $B = \{ x \in C^* (\mathbb{Z}, A, \alpha) \colon
	E(xu^{-n}) \in J_n \enspace \text{for all}
	\enspace n \in \mathbb{Z}\}$ and
$B_0 = B \cap C_c(\mathbb{Z}, A, \alpha)$.
We claim that $B_0$ is dense in $B$.
For any $b \in B$, we have $E(bu^{-k}) \in J_{k}$
for all $k \in \mathbb{Z}$.
By Fej{\'e}r's Theorem \cite[Thm.~VIII.2.2]{Davidson1996Cast},
\[
	\sigma_n (b) = \sum_{|k| \leq n} \Big( 1- \frac{|k|}{n+1} \Big)
	E(bu^{-k})u^k \to b \quad \text{in norm.}
\]
Since $\sigma_n (b) \in B_0$ for all $n \in \mathbb{Z}$,
the claim follows.
It remains to show that $B = C^* (\mathbb{Z}, A, \alpha)_J$.

Next, we prove that $B_0$ is a $*$-algebra.
To prove this,
we claim that $\alpha^{-n}(J_n) = J_{-n}$ and
$J_n \cap \alpha^n (J_m) \subset J_{n+m}$ for all $n,m \in \mathbb{Z}$.
For $n=0$ the claim is trivial. 
For $n>0$, \begin{align*}
\alpha^{-n}(J_n) 
& =\alpha^{-n} \big(J \cap \alpha(J) \cap
	\cdots \cap \alpha^{n-1}(J)\big) \\
& = \alpha^{-n}(J) \cap \alpha^{-(n-1)} (J) \cap
	\cdots \cap \alpha^{-1} (J) \\
& = J_{-n} .
\end{align*}
Acting $\alpha^n$ on both sides gives the case $n < 0$.
	
To see that $J_n \cap \alpha^n (J_m) \subset J_{n+m}$
for all $n,m \in \mathbb{Z}$,
we discuss the combinations of signs of $m,n$ and $m+n$.
The case $n = 0$ or $m = 0$ is trivial.
A routine computation gives the case $n,m>0$ and the case $n,m<0$.
In fact, under these circumstances, $J_n \cap \alpha^n (J_m) = J_{n+m}$.
When $m<0<n$ and $n+m \geq 0$, we have $m < 0 \leq n+m < n$.
It is obviously that $J_n \cap \alpha^n(J_m) \subset J_n \subset J_{n+m}$.
When $m < 0 < n$ and $n + m < 0$,
we have $m < n+m < 0 <n$.
Then \begin{align*}
J_n \cap \alpha^n (J_m)
& = \big( J \cap \alpha (J) \cap \cdots \cap \alpha^{n-1}(J) \big)
	\cap \alpha^n \big( \alpha^{-1} (J) \cap
	\cdots \cap \alpha^{-|m|} (J) \big) \\
& \subset \alpha^{n+m}(J) \cap \cdots \cap \alpha^{-1}(J) \\
& = J_{n+m} .
\end{align*}
A similar argument gives the case $n < 0 < m$.
	
	To prove that $B_0$ is $*$-algebra,
	it remains to show that if $b \in J_n$ and $c \in J_m$,
	then $(bu^n)^* \in B_0$ and $(bu^n) (cu^m) \in B_0$.
	Since $J$ is closed under the $*$ operation,
	it follows that $J_n$ is self adjoint.
	Note that
	$(bu^n)^* = u^{-n}b^* = u^{-n} b^* u^n u^{-n} = \alpha^{-n}(b^*) u^{-n}$.
	The facts $b^* \in J_n$ and $\alpha^{-n}(J_n) = J_{-n}$
	implie $\alpha^{-n}(b^*) \in J_{-n}$.
	So $(bu^n)^* \in B_0$.
	Consider $(bu^n)(cu^m) = b(u^ncu^{-n})u^{n+m} = b \alpha^{n}(c) u^{n+m}$.
	As $c \in J_m$, $\alpha^n(c) \in \alpha^n(J_m)$.
	So $b\alpha^n(c) \in J_n \cap \alpha^n(J_m)$.
	The previous claim implies that $b\alpha^n(c) \in J_{n+m}$.
	Hence $(bu^n)(cu^m) \in B_0$.
	
	Since $B_0$ contains $A$ and $Ju$,
	it follows that
	$C^* (\mathbb{Z}, A, \alpha)_J \subset \overline{B_0} = B$.
	It remains to show that $B_0 \subset C^* (\mathbb{Z}, A, \alpha)_J$,
	or equivalently,
	to show that $J_n u^n \subset C^* (\mathbb{Z}, A, \alpha)_J$
	for all $n \in \mathbb{Z}$.
	For $n=0$ this is trivial. 
	Suppose that $n>0$. Let $b \in J_n$.
	Our goal is to prove that $bu^n \in C^* (\mathbb{Z}, A, \alpha)_J$.
	Without loss of generality, one may assume that $b$ is positive.
	Set $c = b^{1/n}$. Then \begin{align*}
	bu^n & = c^n u^n
	= (c u) [(u^{-1} c u)u][(u^{-2}c u^2)u]
		\cdots [(u^{-(n-1)}c u^{n-1})u] \\
	& = (c u) [\alpha^{-1}(c)u] [\alpha^{-2}(c)u]
		\cdots [\alpha^{-(n-1)}(c)u].
	\end{align*}
	Since $c \in J_n$, one has $\{\alpha^{-k}(c)\}_{k=0}^{n-1} \subset J$.
	It follows that $bu^n$ is the finite product of elmemts of $Ju$.
	Hence $bu^n \in C^* (\mathbb{Z}, A, \alpha)_J$.
	Finally, suppose $n<0$, and let $b \in J_n$.
	It follows from $\alpha^{-n} (J_n) = J_{-n}$
	that $\alpha^{-n}(b^*) \in J_{-n}$.
	We therefore get
	\[
		bu^n = (u^{-n}b^*)^* = (u^{-n} b^* u^n u^{-n})^*
		= (\alpha^{-n}(b^*) u^{-n})^* \in (J_{-n} u^{-n})^*
		\subset C^* (\mathbb{Z}, A, \alpha)_J .
	\]
	This proves $J_n u^n \subset C^* (\mathbb{Z}, A, \alpha)_J$
	for all $n \in \mathbb{Z}$.
	So $B_0 \subset C^* (\mathbb{Z}, A, \alpha)_J$,
	and $\overline{B_0} = B \subset C^* (\mathbb{Z}, A, \alpha)_J$.
	Combining this result with $C^* (\mathbb{Z}, A, \alpha)_J \subset B$,
	we get $C^* (\mathbb{Z}, A, \alpha)_J = B$.
\end{proof}

\begin{corollary}\label{corollary_of_piece_1}
	Adopt Convention~\ref{convention_coefficient_ideal}.
	Suppose that $(J^{(n)})$ is an increasing sequence of
	closed ideals of $A$ such that $J = \overline{\bigcup_{n} J^{(n)}}$.
	Then \[
		C^* (\mathbb{Z}, A, \alpha)_{J}
		= \overline{\bigcup_{n} C^* (\mathbb{Z}, A, \alpha)_{J^{(n)}}}
		= \varinjlim_{n} C^{\ast}(\mathbb{Z}, A, \alpha)_{J^{(n)}} .
	\]
	If there exists $N > 0$ such that $J_N = 0$,
	then $C^{\ast}(\mathbb{Z}, A, \alpha)_{J}$ has
	the Banach space decomposition \[
		C^{\ast}(\mathbb{Z}, A, \alpha)_{J}
		= \bigoplus_{k = -(N-1)}^{N-1} J_k u^k
		= \left(\bigoplus_{k=1}^{N-1} u^{-k}J_k \right)
			\oplus A \oplus \left( \bigoplus_{k=1}^{N-1} J_k u^k \right) \,.
	\]
\end{corollary}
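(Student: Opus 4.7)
My plan is to establish the first equality directly from the generating definition of $C^*(\mathbb{Z}, A, \alpha)_J$, reserving Proposition~\ref{piece_1} for the Banach space decomposition. The inclusion $\overline{\bigcup_n C^*(\mathbb{Z}, A, \alpha)_{J^{(n)}}} \subset C^*(\mathbb{Z}, A, \alpha)_J$ is immediate: $J^{(n)} \subset J$ forces $C^*(\mathbb{Z}, A, \alpha)_{J^{(n)}} \subset C^*(\mathbb{Z}, A, \alpha)_J$ for every $n$, and the right-hand side is closed. For the reverse inclusion, I observe that the closed $*$-subalgebra on the left contains $A$ trivially, and, for $fu$ with $f \in J$, density of $\bigcup_n J^{(n)}$ in $J$ yields $h_m \in J^{(m)}$ with $h_m \to f$, whence $h_m u \in J^{(m)} u \subset C^*(\mathbb{Z}, A, \alpha)_{J^{(m)}}$ converges to $fu$. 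Thus the closure contains the generating set $A \cup Ju$. The identification with the C*-inductive limit is then automatic, since the connecting maps $C^*(\mathbb{Z}, A, \alpha)_{J^{(n)}} \hookrightarrow C^*(\mathbb{Z}, A, \alpha)_{J^{(n+1)}}$ are isometric $*$-homomorphisms.

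For the Banach space decomposition, I first check that $J_k$ vanishes whenever $|k| \geq N$. For $k \geq 0$ this is the chain $J_{k+1} = J_k \cap \alpha^k(J) \subset J_k$ propagating $J_N = 0$ upward, while the identity $\alpha^{-n}(J_n) = J_{-n}$ established inside Proposition~\ref{piece_1} handles the negative indices. Given $b \in C^*(\mathbb{Z}, A, \alpha)_J$, Proposition~\ref{piece_1} gives $E(bu^{-k}) \in J_k$ for every $k \in \mathbb{Z}$, so all Fourier coefficients with $|k| \geq N$ vanish. The Fej\'er sum
\[
    \sigma_n(b) = \sum_{|k| \leq n} \left(1 - \frac{|k|}{n+1}\right) E(bu^{-k}) u^k
\]
then reduces to a sum over $|k| < N$, and as $n \to \infty$ it converges to $b = \sum_{|k| < N} E(bu^{-k}) u^k$. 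Since the maps $b \mapsto E(bu^{-k}) u^k$ are bounded projections onto the closed subspaces $J_k u^k$, and these subspaces are linearly independent (separated by the continuous functionals $b \mapsto E(bu^{-k})$), this realizes $C^*(\mathbb{Z}, A, \alpha)_J$ as the internal Banach space direct sum $\bigoplus_{|k|<N} J_k u^k$. The second form of the decomposition is obtained by rewriting $J_{-k} u^{-k} = \alpha^{-k}(J_k) u^{-k} = u^{-k} J_k$ for $k \geq 1$, using $\alpha^{-k}(a) = u^{-k} a u^k$.

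The step I expect to require a moment of care is the symmetric vanishing $J_{-k} = 0$ whenever $J_k = 0$, because the negatively indexed $J_k$ are defined by intersecting preimages of $J$ rather than its images; this is handled by the identity $\alpha^{-n}(J_n) = J_{-n}$ already proved inside Proposition~\ref{piece_1}. Everything else is either a straightforward density argument or an application of the continuity of the conditional expectation $E$ together with the convergence of Fej\'er sums from Proposition~\ref{piece_1}.
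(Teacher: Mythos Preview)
Your proposal is correct and follows essentially the same route as the paper. For the direct limit, both you and the paper argue by approximating the generators $Ju$ by $J^{(n)}u$; for the decomposition, both rely on Proposition~\ref{piece_1} to see that the Fourier coefficients $E(bu^{-k})$ vanish for $|k|\geq N$, and both invoke the identity $\alpha^{-k}(J_k)=J_{-k}$ to pass to the $u^{-k}J_k$ form. The only stylistic difference is that you exhibit the bounded projections $b\mapsto E(bu^{-k})u^k$ explicitly, whereas the paper simply asserts the algebraic direct sum from Proposition~\ref{piece_1} and then appeals to the Open Mapping Theorem; these are equivalent ways of upgrading the algebraic decomposition to a Banach space one.
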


\begin{proof}
	For any $\epsilon > 0$ and any $a \in J$,
	there exists $n$ and $b \in J^{(n)}$ such that $\|a - b\| < \epsilon$.
	Therefore $\|au - bu\| < \epsilon$.
	Since $J^{(n)}$ is an increasing sequence,
	the connecting map
	$C^* (A , J^{(n)}u) \to C^* (A, J^{(n+1)}u)$ is a unital inclusion.
	So $C^* (\mathbb{Z}, A, \alpha)_{J}
	= \varinjlim_{n} C^*(\mathbb{Z}, A, \alpha)_{J^{(n)}}$.
	
	For the Banach space decomposition,
	note that $u^{-k} J_{k} u^{k} = \alpha^{-k}(J_{k}) = J_{-k}$,
	which gives $J_k u^k = u^{k} J_{-k}$ for $k < 0$.
	So Proposition~\ref{piece_1} gives the algebraic direct sum decomposition
	of $C^{\ast}(\mathbb{Z}, A, \alpha)_{J}$,
	and it is a direct sum decomposition of Banach spaces
	by the Open Mapping Theorem.
\end{proof}

The following lemma is awkward.
It is merely a mechanical analog of the operation on open neigoborhoods 
in an infinite minimal topological dynamical system $(X, h)$.
It is helpful to think of $C$ as $C(X)$ and $J$ as $C_0(X \setminus \{y\})$
for some fixed $y \in X$.
Then $\alpha^{-k}(f) \in J$ is an analog of $f(h^k(y)) = 0$.

\begin{lemma}\label{J_prop}
	Suppose that $C$ is a unital C*-algebra and $\alpha \in \Aut(C)$.
	Let $J$ be a nontrivial closed ideal of $C$,
	such that for every $k \in \mathbb{Z} \setminus \{0\}$,
	there exists $f \in C$ with $0 \leq f \leq 1$ such that
	\begin{enumerate}
		\item $\alpha^{-k}(f) \in J$, $1 - f \in J$, and
		\item $f C \alpha^{-k}(f) = 0$.
	\end{enumerate}
	Then for any $n \in \mathbb{Z}_{>0}$,
	there exists a closed ideal $I \subset \bigcap_{k=1}^{n} \alpha^{-k} (J)$
	such that $\alpha^{-n}(I), \alpha^{-(n-1)}(I), \ldots, \alpha^{n}(I)$
	are pairwise orthogonal and $J + I = C$.
\end{lemma}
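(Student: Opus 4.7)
The plan is to take $I = \bigcap_{k=1}^{2n} I_k$, where each $I_k$ is the closed ideal generated by a single positive element $\phi_k$ supplied by the hypothesis. The motivation is that the required pairwise orthogonality of $\alpha^{-n}(I), \ldots, \alpha^n(I)$ reduces, after translation by an automorphism, to $I \cdot \alpha^m(I) = 0$ for each of the $2n$ displacements $m \in \{1, \ldots, 2n\}$; each $I_k$ will be arranged to handle the displacement $m = k$ on its own, and the intersection then handles them all simultaneously.

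For the construction, I would apply the hypothesis with parameter $-k$ for each $k \in \{1, \ldots, 2n\}$ to obtain $\phi_k \in C$ with $0 \leq \phi_k \leq 1$, $\alpha^k(\phi_k) \in J$, $1 - \phi_k \in J$, and $\phi_k C \alpha^k(\phi_k) = 0$. Setting $I_k = \langle \phi_k \rangle$, three properties follow directly at the level of generators: (i) $J + I_k = C$, using $1 = (1-\phi_k) + \phi_k$; (ii) $I_k \subset \alpha^{-k}(J)$, since $\phi_k \in \alpha^{-k}(J)$ and $\alpha^{-k}(J)$ is a closed ideal; and (iii) $I_k \cdot \alpha^k(I_k) = 0$, because a typical product $(c_1 \phi_k c_2)(c_3 \alpha^k(\phi_k) c_4) = c_1 \phi_k (c_2 c_3) \alpha^k(\phi_k) c_4$ lies in $c_1 \cdot [\phi_k C \alpha^k(\phi_k)] \cdot c_4 = \{0\}$, and such products are dense in $I_k \cdot \alpha^k(I_k)$.

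Finally, with $I = \bigcap_{k=1}^{2n} I_k$, the equality $J + I = C$ follows from (i) by iterating Lemma~\ref{lem_JJ}; the inclusion $I \subset \bigcap_{k=1}^n \alpha^{-k}(J)$ is immediate from (ii) and $I \subset I_k$ for $k = 1, \ldots, n$; and for pairwise orthogonality, given $-n \leq i < j \leq n$ with $m = j - i \in \{1, \ldots, 2n\}$, I apply $\alpha^{-i}$ to reduce to $I \cdot \alpha^m(I) = 0$, which follows from $I \subset I_m$ together with (iii). The main subtlety is sign bookkeeping: the conclusion requires $\alpha^k(I) \subset J$ for \emph{positive} $k$, so the hypothesis must be invoked with the \emph{negative} parameter $-k$ to produce a $\phi_k$ with $\alpha^k(\phi_k) \in J$ rather than $\alpha^{-k}(\phi_k) \in J$; and one must recognize that $2n$ (rather than $n$) displacements need to be controlled, since $i,j$ range over $\{-n,\ldots,n\}$. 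Beyond these, no real obstacle is expected.
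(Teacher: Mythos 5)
Your proof is correct and follows essentially the same route as the paper's: both take $I$ to be a finite intersection of the closed ideals generated by the elements $f$ supplied by the hypothesis, with Lemma~\ref{lem_JJ} guaranteeing $J+I=C$ and the generator relation $fC\alpha^{-k}(f)=0$ passing to the generated ideals to give the orthogonality. The only difference is organizational: the paper assembles the intersection by induction on $n$, invoking the hypothesis at parameters $-1,\ldots,-n$ together with $1,\ldots,2n$, whereas you write down $\bigcap_{k=1}^{2n}\langle\phi_k\rangle$ in one step with the negative parameters doing double duty for both the containment in $\bigcap_{k=1}^{n}\alpha^{-k}(J)$ and the orthogonality at each displacement.
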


\begin{proof}
	From the condition we know that for
	every $k \in \mathbb{Z} \setminus \{0\}$,
	there is a nontrivial closed ideal $\tilde{I}^{(k)}$ of $\alpha^k(J)$
	generated by the corresponding $f$ such that
	$\tilde{I}^{(k)} + J = C$ and
	$\tilde{I}^{(k)}$ is orthogonal to $\alpha^k(\tilde{I}^{(k)})$.

	We prove the conclusion by induction on $n$.
	Let $n = 1$. 
	Let $k=-1,1,2$ and get $\tilde{I}^{(-1)} , \tilde{I}^{(1)}$
	and $\tilde{I}^{(2)}$, respectively.
	Then $I_1 = \tilde{I}^{(-1)} \cap \tilde{I}^{(1)} \cap \tilde{I}^{(2)}$
	is a nontrivial closed ideal such that
	$J + I_1 = C$ by Lemma~\ref{lem_JJ},
	and $\{\alpha^{-1}(I_1) , I_1 , \alpha(I_1)\}$ are pairwise orthogonal.
	We also have $I_1 \subset \alpha^{-1}(J)$
	by $\tilde{I}^{(-1)} \subset \alpha^{-1}(J)$.
		
	Suppose that the proposition is true for $1, 2, \cdots, n$.
	One may find closed ideal $I_n \subset \bigcap_{i=1}^n \alpha^{-i}(J)$
	such that $\{\alpha^k(I_n)\}_{k=-n}^n$ are pairwise orthogonal
	and $J + I_n = C$.
	Applying the condition for $k=-(n+1), 2n+1, 2n+2$,
	we get closed ideals $\tilde{I}^{(-n-1)}$,
	$\tilde{I}^{(2n+1)}$ and $\tilde{I}^{(2n+2)}$, respectively.
	Let $I_{n+1} = I_n \cap \tilde{I}^{(-n-1)}
	\cap \tilde{I}^{(2n+1)} \cap \tilde{I}^{(2n+2)}$.
	Then $I_{n+1} \subset I_n \cap \tilde{I}^{(-n-1)}
	\subset \bigcap_{k=1}^{n+1} \alpha^{-k} (J)$,
	$J + I_{n+1} = C$ and $\{\alpha^k(I_{n+1})\}_{k=-(n+1)}^{n+1}$
	are pairwise orthogonal.
\end{proof}

The following proposition is an analog of Lemma~7.7 in \cite{Phillips2014large}.

\begin{lemma}\label{piece_2}
	Let $A$ be a unital C*-algebra,
	and let $C \subset A$ be a unital C*-subalgebra.
	Suppose that $\alpha$ is a minimal $*$-automorphism of $A$,
	such that $\alpha|_C \in \Aut(C)$ and $C$ is $\alpha|_C$-simple.
	Let $J$ be a nontrivial closed ideal of $A$ such that
	$J|_C = J \cap C$ is a nontrivial closed ideal of $C$
	satisfying the hypotheses in Lemma~\ref{J_prop}.
	Set $B = C^* (\mathbb{Z}, A, \alpha)_{J}$.
	Then for any nonzero closed ideal $I$ of $C$ and any $k \in \mathbb{Z}$,
	there exists nonzero positive contractions $r \in C$ and $s \in I$
	such that $r \precsim_{B} s $ and $1-r \in \alpha^k (J)$.
\end{lemma}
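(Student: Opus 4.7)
The plan is to take $r$ to be a positive contraction in $\alpha^k(L)$ for a suitably small ideal $L \subset C$ supplied by Lemma~\ref{J_prop}, and to produce $s$ by translating a partition-of-unity decomposition of $r$ into $I$ via a power of the canonical unitary $u$. Write $\tilde J = J \cap C$, set $\epsilon = +1$ if $k \ge 0$ and $\epsilon = -1$ if $k < 0$, and note that $C$ is $\alpha^\epsilon|_C$-simple, since $\alpha$- and $\alpha^{-1}$-invariance coincide for ideals. By Lemma~\ref{minimal_equiv_lem}(3) applied to $\alpha^\epsilon|_C$, there exists $M > 0$ with $\sum_{j=1}^M \alpha^{-\epsilon j}(I) = C$. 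Fix $n := |k| + M$.

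Apply Lemma~\ref{J_prop} to $(C, \alpha^\epsilon|_C, \tilde J)$ with parameter $n$; the hypothesis on $\tilde J$ is symmetric under $\alpha \leftrightarrow \alpha^{-1}$ by the substitution $k \leftrightarrow -k$. This yields a nonzero closed ideal $L \subset C$ with $L \subset \bigcap_{l=1}^n \alpha^{-\epsilon l}(\tilde J)$, with $\{\alpha^j(L)\}_{j=-n}^n$ pairwise orthogonal, and with $\tilde J + L = C$. Applying $\alpha^k$ to the last equation and invoking Lemma~\ref{lem24}, I extract a positive contraction $r \in \alpha^k(L)$ with $1 - r \in \alpha^k(\tilde J) \subset \alpha^k(J)$; nontriviality of $\tilde J$ forces $r \ne 0$.

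For $s$: by Lemma~\ref{lem24}, choose $p_j \in \alpha^{-\epsilon j}(I)_+$ contractions with $\sum_{j=1}^M p_j = 1$. Set $a_j := r^{1/2} p_j r^{1/2} \in \alpha^k(L) \cap \alpha^{-\epsilon j}(I)$ (so $r = \sum_j a_j$) and $b_j := \alpha^{\epsilon j}(a_j) \in I$. The key step is to verify that $w_j := a_j^{1/2} u^{-\epsilon j}$ lies in $B$, which by Proposition~\ref{piece_1} reduces to $\alpha^k(L) \subset J_{-\epsilon j}$; unwinding, this amounts to $L \subset \alpha^{-l}(\tilde J)$ for $l \in \{k+1, \ldots, k+j\}$ when $\epsilon = +1$, and to $L \subset \alpha^l(\tilde J)$ for $l \in \{|k|, \ldots, |k|+j-1\}$ when $\epsilon = -1$. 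In either case, these $l$'s lie in $\{1, \ldots, n\}$ and are covered by the containment supplied by Lemma~\ref{J_prop}. Once $w_j \in B$, the identities $w_j w_j^* = a_j$ and $w_j^* w_j = b_j$ give $a_j \sim_B b_j$. Pairwise orthogonality of $\{\alpha^{k + \epsilon j}(L)\}_{j=1}^M$ (the indices lie in $[-n,n]$) forces $b_i b_j = 0$ for $i \ne j$, so Lemma~\ref{cuntz_comp_lem}(\ref{cuntz_comp_lem_6}) yields $\bigoplus_j b_j \sim_B \sum_j b_j \in I_+$; normalize to obtain a nonzero positive contraction $s \in I$ (nonzero since $r \ne 0$ forces some $a_j \ne 0$). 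The chain
\[
r = \sum_j a_j \precsim_B \bigoplus_j a_j \precsim_B \bigoplus_j b_j \sim_B \sum_j b_j \sim_B s,
\]
using parts~(\ref{cuntz_comp_lem_5}), (\ref{cuntz_comp_lem_6}) and (\ref{cuntz_comp_lem_7}) of Lemma~\ref{cuntz_comp_lem}, completes the proof. The main obstacle is the sign-dependent containment $\alpha^k(L) \subset J_{-\epsilon j}$: the direction in which $r$ can be shifted inside $B$ is dictated by the sign of $k$, which is precisely why Lemma~\ref{J_prop} must be applied with the automorphism $\alpha^\epsilon$ rather than always $\alpha$.
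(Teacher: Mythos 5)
Your proof is correct, and it runs on the same engine as the paper's: Lemma~\ref{minimal_equiv_lem} to write $C$ as a sum of translates of $I$, Lemma~\ref{J_prop} to produce a small ideal $L$ with pairwise orthogonal translates contained in a deep intersection of translates of $J|_C$, Lemma~\ref{lem24} to decompose a positive contraction accordingly, Proposition~\ref{piece_1} to certify that the shift elements $a_j^{1/2}u^{-\epsilon j}$ lie in $B$, and Lemma~\ref{cuntz_comp_lem} (\ref{cuntz_comp_lem_2}), (\ref{cuntz_comp_lem_5}), (\ref{cuntz_comp_lem_6}), (\ref{cuntz_comp_lem_7}) to assemble the Cuntz comparison. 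Where you genuinely diverge is the treatment of $k \ne 0$: the paper first proves the case $k = 0$ and then handles general $k$ by a separate cutoff-and-conjugate step (compressing the $k=0$ element $r_0$ by the functions $f_j$ supplied by Lemma~\ref{J_prop} so that it lands in $\bigcap_{j=1}^{k}\alpha^{-j}(J)$, and then conjugating by $u^k$ inside $B$), whereas you absorb $k$ into the parameters from the outset by taking $n = |k| + M$ and running Lemma~\ref{J_prop} for $\alpha^{\epsilon}$ with $\epsilon = \mathrm{sgn}(k)$. The price of your version is the index bookkeeping for $\alpha^k(L) \subset J_{-\epsilon j}$, which you carry out correctly in both sign cases, together with the (correctly noted) observation that the hypothesis of Lemma~\ref{J_prop} is symmetric under $\alpha \leftrightarrow \alpha^{-1}$ via $k \leftrightarrow -k$. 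Your decomposition $a_j = r^{1/2} p_j r^{1/2}$ with $\sum_j p_j = 1$ is an equivalent variant of the paper's direct choice of $r_i \in \alpha^{-i}(I)\cap I'$ from the refined decomposition $J|_C + \sum_i \big(\alpha^{-i}(I)\cap I'\big) = C$; both yield the orthogonality of the translated pieces needed for Lemma~\ref{cuntz_comp_lem} (\ref{cuntz_comp_lem_6}). The uniform treatment is arguably cleaner, and it makes explicit a point the paper leaves somewhat implicit, namely that for $k < 0$ the pieces must be shifted in the opposite direction, which is exactly what your choice of $\epsilon$ encodes.
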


\begin{proof}
	We first prove this when $k=0$.
	
	By Lemma~\ref{minimal_equiv_lem},
	there exists $n \in \mathbb{Z}_{>0}$ such that
	$\sum_{k=1}^n \alpha^{-k} (I) = C$.
	Apply Lemma~\ref{J_prop} and obtain a nontrivial closed ideal
	$I' \subset \bigcap_{j=1}^{n} \alpha^{-j}(J |_C)$
	such that $\alpha^{-n}(I'), \alpha^{-(n-1)}(I'), \ldots, \alpha^{n}(I')$
	are pairwise orthogonal and $J|_C + I' = C$.
	So
	$
		J|_C + I' = J|_C + \sum_{i=1}^n \alpha^{-i}(I)\cap I' = C .
	$
	Use Lemma~\ref{lem24} to choose
	$r_i \in \alpha^{-i}(I) \cap I'$ for $i=1, 2, \ldots, n$
	with $0 \leq r_i \leq 1$
	such that $1 - \sum_{i=1}^n r_i \in J|_C$.
	Then $\alpha(r_1), \alpha^{2}(r_2), \ldots, \alpha^{n}(r_n)$ are
	pairwise orthogonal in $I$.
	Since $r_i \in \bigcap_{j=1}^{n} \alpha^{-j}(J)$,
	we have $r_i^{1/2} u^{i} \in B$ by Proposition~\ref{piece_1}.
	Therefore, by applying Lemma~\ref{cuntz_comp_lem} \eqref{cuntz_comp_lem_2}
	to $B$, we have \[
		\alpha^i(r_i) = (r_i^{1/2} u^{-i})^* (r_i^{1/2} u^{-i})
		\sim_{B} (r_i^{1/2} u^{-i}) (r_i^{1/2} u^{-i})^* = r_i .
	\]
	Let $r = \sum_{i=1}^n r_i$. Then $1-r \in J|_C$.
	Set $s = \sum_{i=1}^n \alpha^i (r_i)$. Then $s \in I$ and
	\[
		r = \sum_{i=1}^N r_i \precsim_{B} \bigoplus_{i=1}^N r_i
		\sim_{B} \bigoplus_{i=1}^N \alpha^i(r_i)
		\sim_{B} \sum_{i=1}^N \alpha^i (r_i) = s
	\]
	due to Lemma~\ref{cuntz_comp_lem}
	\eqref{cuntz_comp_lem_5} and \eqref{cuntz_comp_lem_6}.
	This completes the proof for $k=0$.
	
	Now suppose that $k \ne 0$.
	Find $r_0 \precsim_{B} s \in I$ for the case $k=0$
	such that $1 - r_0 \in J|_C$.
	Using Lemma~\ref{J_prop} for $j=-1, -2, \ldots, -k$,
	one may find $f_j \in C$ with $0 \leq f_j \leq 1$
	satisfying $1 - f_j \in J|_C$ and $f_j \in \alpha^j (J|_C)$.
	Let $h = f_{-k} \cdots f_{-1} r_0 f_{-1} \cdots f_{-k}$.
	Then $h \in \bigcap_{j=1}^k \alpha^{-j}(J)$, $0 \leq h \precsim_{C} r_0$
	and $1-h \in J|_C$.
	Therefore $h^{1/2}u^{-k} \in B$ by Proposition~\ref{piece_1}.
	Set $r = \alpha^k(h)$. Then $1-r \in \alpha^k(J|_C)$ and
	\[
		r = u^k h u^{-k} = (h^{1/2}u^{-k})^* (h^{1/2}u^{-k})
		\sim_{B} (h^{1/2}u^{-k}) (h^{1/2}u^{-k})^* = h \precsim_{B} r_0 \precsim_{B} s .
	\]
	This completes the proof.
\end{proof}

The following definition originally appeared in
\cite[Lem.~3.2]{Kishimoto1981outer} as a conclusion;
we call it \emph{Kishimoto's condition} in \cite[Def.~10.4.20]{GKPT2018Crossed}.

\begin{definition}\label{def_Kishimoto}
	Let $\alpha \colon G \to \Aut(A)$ be an action
	of a discrete group $G$ on a C*-algebra $A$.
	We say that $\alpha$ satisfies \emph{Kishimoto's condition} if,
	for every positive element $x \in A$ with $\|x\| = 1$,
	every finite set $F \subset G$,
	every finite set $S \subset A$, and every $\epsilon > 0$,
	there is a positive element $c \in A$ with $\|c\| = 1$ such that
	$\|cxc\| > 1 - \epsilon$ and $\|{cb\alpha_g(c)}\| < \epsilon$
	for all $g \in F$ and $b \in S$.
\end{definition}

The following theorem is contained in~\cite[Thm.~3.1]{Kishimoto1981outer};
see the proof of \cite[Thm.~10.4.22]{GKPT2018Crossed}
for arbitary discrete group.

\begin{theorem}\label{Kishimoti1981thm}
	Let $A$ be a C*-algebra,
	and let $\alpha \colon \mathbb{Z} \to \Aut(A)$ be
	an action of $\mathbb{Z}$ on $A$.
	Suppose that $A$ is $\alpha$-simple and
	$\alpha$ satisfies Kishimoto's condition.
	Then $\mathrm{C}^*(\mathbb{Z},A,\alpha)$ is simple.
\end{theorem}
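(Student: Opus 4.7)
The plan is the classical Kishimoto argument. Given a nonzero closed two-sided ideal $I$ of $B := C^*(\mathbb{Z},A,\alpha)$, I would first produce a nonzero element of $I \cap A$, then invoke $\alpha$-simplicity to deduce $I \cap A = A$, and finally use an approximate identity of $A$ to conclude $I = B$. The heart of the matter is a cut-down trick: given a positive $x \in I$, Kishimoto's condition furnishes a positive contraction $c \in A$ that essentially diagonalizes $cxc$ onto $cE(x)c \in A$ while keeping $\|cE(x)c\|$ close to $\|E(x)\|$.

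Concretely, I would pick $0 \neq z \in I$ and rely on faithfulness of the conditional expectation $E \colon B \to A$ (valid because $\mathbb{Z}$ is amenable, so the full and reduced crossed products coincide) to get $E(z^*z) \neq 0$. After rescaling, I may assume $x := z^*z/\|E(z^*z)\| \in I_+$ satisfies $\|E(x)\| = 1$. Fix a small $\epsilon > 0$ and approximate $x$ by $y = \sum_{k=-N}^{N} a_k u^k \in C_c(\mathbb{Z},A,\alpha)$ with $\|y - x\| < \epsilon$, so in particular $\|a_0 - E(x)\| < \epsilon$. Now invoke Definition~\ref{def_Kishimoto} with the norm-one positive element $E(x) \in A$, finite set $F = \{k \in \mathbb{Z} : 1 \leq |k| \leq N\}$, $S = \{a_k : k \in F\}$, and tolerance $\epsilon/(2N+1)$, obtaining $c \in A_+$ with $\|c\| = 1$, $\|cE(x)c\| > 1 - \epsilon$, and $\|c a_k \alpha^k(c)\| < \epsilon/(2N+1)$ for $k \in F$. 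Expanding
\[
cyc \;=\; \sum_{k=-N}^{N} c\, a_k\, \alpha^k(c)\, u^k,
\]
the off-diagonal contributions have total norm at most $\epsilon$; combining this with $\|cxc - cyc\| < \epsilon$ and $\|c a_0 c - c E(x) c\| < \epsilon$ yields $\|cxc - cE(x)c\| < 3\epsilon$ while $\|cE(x)c\| > 1 - \epsilon$.

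Setting $a := cE(x)c \in A_+$ and $b := cxc \in I_+$, Lemma~\ref{cuntz_comp_lem}(\ref{cuntz_comp_lem_4}) applied inside $B$ gives $(a - 3\epsilon)_+ \precsim_B b$. The approximating sequence $v_n b v_n^* \in I$ realizing this subequivalence, together with closedness of $I$, forces $(a - 3\epsilon)_+ \in I$; moreover $(a - 3\epsilon)_+$ lies in $A$ and has norm at least $\|a\| - 3\epsilon > 1 - 4\epsilon$, which is strictly positive once $\epsilon < 1/4$. Hence $I \cap A \neq \{0\}$. Because conjugation by $u$ preserves both $A$ and $I$, the nonzero closed ideal $I \cap A$ of $A$ is $\alpha$-invariant, so $\alpha$-simplicity forces $I \cap A = A$. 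Since any approximate identity of $A$ is an approximate identity of $B$, the inclusion $A \subset I$ then yields $I = B$.

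The main obstacle lies in the Kishimoto application above: I must juggle three error sources simultaneously, namely the approximation error $\|y - x\|$, the Kishimoto tolerance $\epsilon/(2N+1)$, and the control $\|a_0 - E(x)\|$, and arrange that their combination makes $\|cxc - cE(x)c\|$ substantially smaller than $\|cE(x)c\|$. Once that quantitative estimate is in hand, the Cuntz-comparison extraction of a nonzero element of $I \cap A$ via Lemma~\ref{cuntz_comp_lem}(\ref{cuntz_comp_lem_4}) and the final passage $I \cap A = A \Rightarrow I = B$ are both routine.
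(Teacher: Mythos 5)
Your argument is correct and is precisely the classical Kishimoto cut-down argument that the paper itself only cites (Kishimoto's Theorem~3.1 and the proof of Theorem~10.4.22 in the crossed products book); it is also the same technique the paper uses internally in Lemma~\ref{Kishimoto_prime_lem}. The error bookkeeping, the extraction of a nonzero element of $I\cap A$ via Lemma~\ref{cuntz_comp_lem}(\ref{cuntz_comp_lem_4}), and the reduction to $\alpha$-simplicity all check out, including the appeal to amenability of $\mathbb{Z}$ to get faithfulness of $E$ on the full crossed product.
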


The following lemma is similar to
\cite[Lem.~7.9]{Phillips2014large} and \cite[Lem.~4.2]{Niu2019comparison_I},
but in the noncommutative setting we use
Kishimoto's condition to replace the freeness condition.

\begin{lemma}\label{Kishimoto_prime_lem}
	Let $A$ be a C*-algebra, and let $\alpha \in \Aut(A)$.
	Suppose that $\alpha$ satisfies Kishimoto's condition.
	Let $B \subset \mathrm{C}^* (\mathbb{Z},A,\alpha)$ be a $C^*$-subalgebra
	such that $A \subset B$
	and $B \cap C_c(\mathbb{Z}, A, \alpha)$ is dense in $B$.
	Let $b \in B_{+} \setminus \{0\}$.
	Then there exists $a \in A_{+} \setminus \{0\}$
	such that $a \precsim_{B} b$.
\end{lemma}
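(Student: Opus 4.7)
The plan is to run a Kishimoto-style cut-down argument. Without loss of generality, normalize so that $\|b\| = 1$. Since the canonical conditional expectation $E \colon C^{\ast}(\mathbb{Z}, A, \alpha) \to A$ is faithful ($\mathbb{Z}$ being amenable) and $b$ is a nonzero positive element, $E(b) \in A_{+} \setminus \{0\}$. Using the density assumption, I would choose a self-adjoint approximant
\[
    b' = \sum_{k = -N}^{N} b_k u^k \in B \cap C_c(\mathbb{Z}, A, \alpha)
\]
with $\|b - b'\|$ arbitrarily small, so that the zeroth coefficient $b_0 = E(b')$ is close to $E(b)$ and in particular nonzero with a positive lower bound on its norm.

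Next, I would apply Kishimoto's condition (Definition~\ref{def_Kishimoto}) to the positive element $x = b_0 / \|b_0\| \in A$, the finite set $F = \{k \in \mathbb{Z} : 1 \leq |k| \leq N\}$, the finite set $S = \{b_k : k \in F\}$, and a small parameter $\epsilon' > 0$. This produces a positive contraction $c \in A$ with $\|c\| = 1$ satisfying $\|c b_0 c\| > \|b_0\|(1 - \epsilon')$ and $\|c b_k \alpha^k(c)\| < \epsilon'$ for every $k \in F$. Using the crossed product relation $u^k c = \alpha^k(c) u^k$ and $c \in A$, the cut-down decomposes as
\[
    c b' c = c b_0 c + \sum_{k \in F} c b_k \alpha^k(c) u^k,
\]
so $\|c b' c - c b_0 c\| \leq 2 N \epsilon'$; combining with $\|c b c - c b' c\| \leq \|b - b'\|$ makes $\|c b c - c b_0 c\|$ as small as desired.

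For the final step, since $c \in A \subset B$, Lemma~\ref{cuntz_comp_lem}(\ref{cuntz_comp_lem_2}) applied to $c b^{1/2}$ gives $c b c = (c b^{1/2})(c b^{1/2})^{\ast} \sim_B (c b^{1/2})^{\ast}(c b^{1/2}) = b^{1/2} c^2 b^{1/2} \in \overline{b B b}$, whence $c b c \precsim_B b$ by Lemma~\ref{cuntz_comp_lem}(\ref{cuntz_comp_lem_1}). Choosing the approximation parameters small enough that the total error $\delta$ satisfies $\delta < \tfrac{1}{2} \|b_0\|(1 - \epsilon')$, Lemma~\ref{cuntz_comp_lem}(\ref{cuntz_comp_lem_8}) yields
\[
    (c b_0 c - \delta)_+ \precsim_B c b c \precsim_B b,
\]
and $(c b_0 c - \delta)_+$ is a nonzero element of $A_{+}$ because $\|c b_0 c\| > \|b_0\|(1 - \epsilon') > \delta$. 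Setting $a := (c b_0 c - \delta)_+$ completes the argument.

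The main obstacle I anticipate is the bookkeeping needed to keep $(c b_0 c - \delta)_+$ from vanishing: one must first exploit faithfulness of the canonical conditional expectation to secure a positive lower bound on $\|E(b)\|$, and then tune the approximation $\|b - b'\|$ and the Kishimoto parameter $\epsilon'$ so that the cumulative perturbation stays strictly below that lower bound. The rest is a routine marriage of the Kishimoto cut-down technique with the Cuntz subequivalence toolkit of Lemma~\ref{cuntz_comp_lem}.
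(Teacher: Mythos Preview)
Your approach is essentially the paper's: a Kishimoto cut-down of a finitely supported approximant, followed by the Cuntz toolkit of Lemma~\ref{cuntz_comp_lem}. There is one small slip worth fixing. You assert that $b_0 = E(b')$ is positive, but a self-adjoint $b' \in B \cap C_c(\mathbb{Z},A,\alpha)$ only guarantees $b_0$ self-adjoint, so $b_0/\|b_0\|$ cannot be fed directly into Definition~\ref{def_Kishimoto}. The paper sidesteps this by approximating $b^{1/2}$ (rather than $b$) by some $c \in B \cap C_c(\mathbb{Z},A,\alpha)$: then $c^*c \in B \cap C_c(\mathbb{Z},A,\alpha)$ is positive and close to $b$, and its zeroth coefficient $E(c^*c)$ is automatically positive. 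With that adjustment (or by applying Kishimoto to $E(b)/\|E(b)\|$ and absorbing $\|b_0 - E(b)\|$ into the error budget) your argument goes through. Your endgame $cbc \sim_B b^{1/2}c^2 b^{1/2} \in \overline{bBb} \precsim_B b$ is in fact a little more direct than the paper's chain through $(cf^2c^* - 2\epsilon)_+ \precsim_B (cc^* - 2\epsilon)_+ \precsim_B b$.
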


\begin{proof}
	Without lose of generality, one may assume that $\|b\| \leq 1$.
	Recall that the condition expectation
	$E : \mathrm{C}^* (\mathbb{Z},A,\alpha) \to A$
	is faithful and contractive.
	Set $\epsilon = \|\mathrm{E}(b)\| / 8$.
	Then $0 < \epsilon \leq 1/8$.
	Choose $c \in B\cap C_c(\mathbb{Z}, A, \alpha)$ such that
	$\| c - b^{{1}/{2}}\| < \epsilon$ and $\| c\| \leq 1$.
	Then $\| c^*c - b\| < 2\epsilon$ and $\| cc^* - b\| < 2\epsilon$.
	Suppose that $c^* c = \sum_{k=-n}^n a_k u^k$,
	where $n \in \mathbb{Z}_{>0}$
	and $a_k \in A$ for $k = -n , \ldots, n$.
	Note that $a_0 = E(c^* c) \in A_{+} \setminus \{0\}$.
	Apply Kishimoto's condition (see Definition \ref{def_Kishimoto}) with
	$\|a_0\|^{-1} a_0$ in place of $x$,
	with $\{-n , \ldots, n\} \setminus \{0\}$ in place of $F$,
	with $\{ a_k : 0 < |k| \leq n\}$ in place of $S$,
	and with ${\epsilon}/{(2n)}$ in place of $\epsilon$,
	getting $f \in A_{+}$ with $\|f\|=1$ such that
	\[
	\|f a_0 f\| > \left( 1 - \frac{\epsilon}{2n} \right) \|a_0\|
	\]
	and
	\[
	\|fc^*cf - f a_0 f\|
	\leq \sum_{0 < |k| \leq n} \| f a_k u^k f \|
	= \sum_{0 < |k| \leq n} \| f a_k \alpha^k (f) \|
	< \frac{2n\epsilon}{2n} = \epsilon .
	\]
	Let $x = f a_0 f$. Then $x \in A_{+}$.
	Since $\|c^*c - b\| < 2\epsilon$,
	we have $\|a_0\| \geq \| E(b)\| - 2\epsilon$.
	Thus
	\[
	\|x\| > \left( 1 - \frac{\epsilon}{2n} \right) \|a_0\|
	\geq \left( 1 - \frac{\epsilon}{2n} \right)
		\left(\|E(b)| - 2\epsilon\right)
	= \left(1 - \frac{\epsilon}{2n}\right) 6\epsilon > 5\epsilon .
	\]
	So $(x - 3\epsilon)_{+}$ is a nonzero positive element in $A$.
	Using Lemma~\ref{cuntz_comp_lem} \eqref{cuntz_comp_lem_8}
	at the first step,
	\eqref{cuntz_comp_lem_3} at the second step,
	\eqref{cuntz_comp_lem_9} at the third step
	and \eqref{cuntz_comp_lem_4} at the last step,
	we then have \[
	(x - 3\epsilon)_{+} \precsim_B (fc^*cf - 2\epsilon)_{+}
	\sim_B (cf^2 c^* - 2\epsilon)_{+} \precsim_B (cc^* - 2\epsilon)_{+}
	\precsim_B b .
	\]
	Thus $a = (x-3\epsilon)_{+}$ satisfies our requirements.
\end{proof}

\begin{corollary}\label{piece_3}
	Let $A$ be a C*-algebra, and let $\alpha \in \Aut(A)$.
	Suppose that $\alpha$ satisfies Kishimoto's condition
	and $A$ is $\alpha$-prime.
	Set $B = C^* (\mathbb{Z}, A, \alpha)_J$.
	Then for any $x \in C^* (\mathbb{Z}, A, \alpha)_{+} \setminus \{0\}$
	and $y \in B_{+} \setminus \{0\}$,
	there exists $z \in A_{+} \setminus \{0\}$
	such that $z \precsim_{C^* (\mathbb{Z}, A, \alpha)} x$
	and $z \precsim_{B} y$.
\end{corollary}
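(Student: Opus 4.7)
The strategy is to reduce both $x$ and $y$ to positive elements of $A$ via Lemma~\ref{Kishimoto_prime_lem}, then glue the two resulting elements of $A$ together using the hypothesis that $A$ is $\alpha$-prime. Concretely, I will first apply Lemma~\ref{Kishimoto_prime_lem} with the ambient algebra $C^*(\mathbb{Z},A,\alpha)$ itself playing the role of $B$ (which trivially contains $A$ and has $C_c(\mathbb{Z},A,\alpha)$ dense in it), applied to the element $x$; this produces $a_x \in A_{+} \setminus \{0\}$ with $a_x \precsim_{C^*(\mathbb{Z},A,\alpha)} x$. Next I apply Lemma~\ref{Kishimoto_prime_lem} with the actual $B = C^*(\mathbb{Z},A,\alpha)_J$ and the element $y$; the density hypothesis on $B \cap C_c(\mathbb{Z},A,\alpha)$ is supplied by Proposition~\ref{piece_1}, so I obtain $a_y \in A_{+} \setminus \{0\}$ with $a_y \precsim_{B} y$.

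The remaining task is to produce a single nonzero $z \in A_{+}$ that sits Cuntz-below $a_y$ (inside $A$, hence inside $B$) and Cuntz-below some $\alpha$-translate of $a_x$ (inside $A$, hence inside $C^*(\mathbb{Z},A,\alpha)$, where the translate is Cuntz-equivalent to $a_x$ via conjugation by $u^n$). Here is where $\alpha$-primeness enters: Lemma~\ref{alpha_prime_lem}(3) applied to the nonzero elements $a_y$ and $a_x$ yields some $n \in \mathbb{Z}$ and $b \in A$ with $a_y b \alpha^n(a_x) \ne 0$. Set
\[
    c = a_y^{1/2} b \alpha^n(a_x)^{1/2} \in A,
    \qquad
    z = c c^{*} = a_y^{1/2} b \alpha^n(a_x) b^{*} a_y^{1/2}.
\]
Then $z$ is a nonzero positive element of $A$.

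To check $z \precsim_{B} y$, observe that $z$ lies in the hereditary subalgebra $\overline{a_y A a_y}$ of $A$, so Lemma~\ref{cuntz_comp_lem}(\ref{cuntz_comp_lem_1}) gives $z \precsim_{A} a_y$, and this sub-equivalence passes to $B$ because $A \subset B$; composing with $a_y \precsim_{B} y$ finishes that half. To check $z \precsim_{C^*(\mathbb{Z},A,\alpha)} x$, use Lemma~\ref{cuntz_comp_lem}(\ref{cuntz_comp_lem_2}) to get $z = cc^{*} \sim_{A} c^{*}c$, note that $c^{*}c = \alpha^n(a_x)^{1/2} b^{*} a_y b \alpha^n(a_x)^{1/2} \in \overline{\alpha^n(a_x) A \alpha^n(a_x)}$ so $c^{*}c \precsim_{A} \alpha^n(a_x)$, and finally observe that inside the crossed product $\alpha^n(a_x) = u^n a_x u^{-n}$ is Cuntz-equivalent to $a_x$ (apply Lemma~\ref{cuntz_comp_lem}(\ref{cuntz_comp_lem_2}) to $u^n a_x^{1/2}$); composing with $a_x \precsim_{C^*(\mathbb{Z},A,\alpha)} x$ completes the argument.

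The only conceptually non-routine step is the invocation of $\alpha$-primeness to ensure that $a_y$ and some $\alpha$-translate of $a_x$ have a nonzero product in $A$; without this hypothesis the two reductions produced by Lemma~\ref{Kishimoto_prime_lem} could be supported on disjoint $\alpha$-invariant pieces of $A$ and there would be no way to connect them. Everything else is a mechanical assembly of the parts of Lemma~\ref{cuntz_comp_lem} together with the unitary invariance of Cuntz sub-equivalence, so I expect no further obstacles.
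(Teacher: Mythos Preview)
Your proposal is correct and follows essentially the same route as the paper: reduce $x$ and $y$ to elements of $A_+$ via Lemma~\ref{Kishimoto_prime_lem} (invoking Proposition~\ref{piece_1} for the density hypothesis on $B$), then use $\alpha$-primeness through Lemma~\ref{alpha_prime_lem} to produce a nonzero $z = a_y^{1/2} b \alpha^n(a_x) b^* a_y^{1/2}$ that sits in $\overline{a_y A a_y}$ on one side and is conjugate via $u^n$ to something in $\overline{a_x A a_x}$ on the other. The only cosmetic differences are that the paper normalizes so that $z \leq y$ rather than invoking the hereditary-subalgebra criterion, and it writes $z$ directly as $(a_y^{1/2} b u^n) a_x (a_y^{1/2} b u^n)^*$ to get the subequivalence to $x$ in one line instead of passing through $c^*c$.
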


\begin{proof}
	Applying Lemma~\ref{Kishimoto_prime_lem} to $x$
	with $C^*(\mathbb{Z}, A, \alpha)$ in place of $B$,
	and to $y$ with $B$ as given,
	we see that it suffices to proof the corollary
	for $x, y \in A_+ \setminus \{0\}$.
	Since $A$ is $\alpha$-prime,
	it follows from Lemma~\ref{alpha_prime_lem} that
	there exists $n \in \mathbb{Z}$ and $d \in A$
	such that $ y^{1/2} d \alpha^n(x^{1/2}) \ne 0$.
	Without loss of generality, one may assume that
	$\| x\| \leq 1$ and $ \|d\|\leq 1$.
	Set $z = y^{1/2} d \alpha^n(x) d^* y^{1/2}$.
	Then $z \in A_{+} \setminus \{0\}$ and $z \leq y$.
	Thus $z \precsim_{B} y$ by Lemma~\ref{cuntz_comp_lem}
	\eqref{cuntz_comp_lem_1}. On the other hand,
	$z = (y^{1/2} b u^n) x (y^{1/2} b u^n)^* 
	\precsim_{C^* (\mathbb{Z}, A, \alpha)} x$.
\end{proof}

\begin{theorem}\label{main_thm}
	Suppose that $A$ is a unital C*-algebra and
	$C \subset A$ is a unital C*-subalgebra.
	Let $\alpha \in \Aut(A)$ be minimal such that $\alpha|_C \in \Aut(C)$,
	and suppose that $\alpha$ satisfies the Kishimoto's condition.
	Let $J$ be a nontrivial closed ideal of $A$ such that
	$J|_C = J \cap C$ is the nontrivial closed ideal of $C$
	as in Lemma~\ref{J_prop}.
	Set $B = C^* (\mathbb{Z}, A, \alpha)_{J}$.
	Assume that $C$ is $\alpha|_C$-simple with the following properties:
	\begin{enumerate}
		\item\label{main_thm_assumption_1}
		for any $n \in \mathbb{Z}_{> 0}$,
		any $c \in C_+ \setminus \{0\}$,
		there exists $n$ orthogonal nonzero positive element
		$e_1 , \ldots ,e_n$ in $\overline{cCc}$;
		\item\label{main_thm_assumption_2}
		for any $a \in A_+ \setminus \{0\}$,
		there exists $e \in C_+ \setminus \{0\}$ such that
		$e \precsim_{B} a$.
	\end{enumerate}
	If $C^* (\mathbb{Z}, A, \alpha)$ is finite, 
	then $B$ is a large subalgebra
	of $C^* (\mathbb{Z}, A, \alpha)$ of crossed product type.
\end{theorem}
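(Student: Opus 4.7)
My plan is to verify directly the two conditions in the definition of a large subalgebra of crossed product type, taking $G = \{u\}$ for $u$ the canonical unitary of $D := C^*(\mathbb{Z},A,\alpha)$. Condition~(1) is immediate: $A$ is unital so contains $1_D$; $A$ together with $u$ generates $D$ by the construction of the crossed product; and $uAu^* = \alpha(A) = A$, with the analogous statement for $u^*$. For condition~(2), fix data $a_1,\ldots,a_m \in D$, $\epsilon>0$, $x \in D_+$ with $\|x\|=1$, and $y \in B_+\setminus\{0\}$, and use Proposition~\ref{piece_1} to approximate each $a_j$ within $\epsilon/2$ by a Laurent polynomial $c_j = \sum_{|k|\leq N}a_{j,k}u^k$ with common $N$. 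Proposition~\ref{piece_1} then shows that $(1-g)c_j \in B$ follows once $1-g$ lies in the ideal $K_N := \bigcap_{i=-N}^{N-1}\alpha^i(J)$, since $K_N \subset J_k$ for every $|k|\leq N$. The remaining task is to produce $g \in C$ with $0\leq g \leq 1$, $1-g \in K_N \cap C$, $g \precsim_B y$, and $\|(1-g)x(1-g)\|>1-\epsilon$.

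To construct $g$, I will combine the orbit-breaking machinery of Lemmas~\ref{J_prop} and~\ref{piece_2} with the two hypotheses of the theorem. Using hypothesis~(\ref{main_thm_assumption_2}), pick $e \in C_+\setminus\{0\}$ with $e\precsim_B y$; using hypothesis~(\ref{main_thm_assumption_1}), extract $2N$ pairwise orthogonal nonzero positive contractions $e_{-N},\ldots,e_{N-1}$ in $\overline{eCe}$, so by Lemma~\ref{cuntz_comp_lem}(\ref{cuntz_comp_lem_6}) and heredity, $\bigoplus_k e_k \sim_C \sum_k e_k \precsim_C e \precsim_B y$. For each $k$, apply Lemma~\ref{piece_2} to the ideal of $C$ generated by $e_k$, together with a modest refinement of its proof arranging the positive witness to lie in $\overline{e_k C e_k}$, to obtain a positive contraction $r_k \in C$ with $1-r_k \in \alpha^k(J)$ and $r_k \precsim_B e_k$. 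Then define $g$ by the telescoping recursion $g_0 = 0$ and $g_{i+1} = g_i + (1-g_i)^{1/2}r_{k(i)}(1-g_i)^{1/2}$ for $i = 0,1,\ldots,2N-1$, where $k(0),\ldots,k(2N-1)$ is a permutation of $\{-N,\ldots,N-1\}$. The identity $1-g_{i+1} = (1-g_i)^{1/2}(1-r_{k(i)})(1-g_i)^{1/2}$ inductively propagates ideal membership (since continuous functional calculus preserves ideals), yielding $1-g := 1-g_{2N} \in K_N \cap C$ by Lemma~\ref{ideal_lattice_lem}. Each increment $g_{i+1}-g_i = (1-g_i)^{1/2}r_{k(i)}(1-g_i)^{1/2}$ is Cuntz-subequivalent in $B$ to $r_{k(i)}$ via Lemma~\ref{cuntz_comp_lem}(\ref{cuntz_comp_lem_2}), and orthogonality of the $e_k$'s combined with Lemma~\ref{cuntz_comp_lem}(\ref{cuntz_comp_lem_5})--(\ref{cuntz_comp_lem_7}) gives $g \precsim_B \bigoplus_k e_k \sim_B \sum_k e_k \precsim_B y$; the bound $g \precsim_D y$ is then automatic since $B\subset D$.

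The main obstacle is the last estimate $\|(1-g)x(1-g)\|>1-\epsilon$, which requires the Cuntz support of $g$ to avoid the norm concentration of $x$. I plan to address this by invoking Kishimoto's condition at the outset (in the spirit of Lemma~\ref{Kishimoto_prime_lem}) to find a positive contraction $f \in A_+$ with $\|f\|=1$, $\|fxf\|>1-\epsilon/2$, and $f$ nearly orthogonal to each $a_{j,k}$; then, when selecting $e$ via hypothesis~(\ref{main_thm_assumption_2}) together with Corollary~\ref{piece_3} applied to a suitable element built from $f$, I would arrange $e$ to be dominated in $D$ by an element approximately orthogonal to $f$. This keeps $(1-g)f$ close to $f$ and hence $\|(1-g)x(1-g)\|$ close to $\|fxf\| > 1-\epsilon$. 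The finiteness hypothesis on $C^*(\mathbb{Z},A,\alpha)$ enters precisely at this balancing step, following the pattern of \cite[Thm.~7.10]{Phillips2014large}, to exclude degenerate Cuntz comparisons originating from infinite projections in $D$.
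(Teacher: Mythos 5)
Your construction of the element $g$ (the telescoping recursion built from the $r_k$ of Lemma~\ref{piece_2}) is essentially sound and parallels what the paper does, but the proposal has a genuine gap at the step you yourself flag as the main obstacle: the estimate $\|(1-g)x(1-g)\| > 1-\epsilon$. Your plan is to choose $e$ so that it is Cuntz-dominated in $D$ by something approximately orthogonal to a Kishimoto element $f$, and to conclude that $g$ is then approximately orthogonal to $f$. That inference does not follow: $g \precsim_B y$ and $e \precsim_D (\text{something orthogonal to } f)$ give no control whatsoever on the actual position of $g$ as an element of $C$ --- Cuntz subequivalence is blind to supports. Worse, $g$ is forced to satisfy $1-g \in \bigcap_{k}\alpha^k(J)$, so $g$ must be ``close to $1$'' modulo the ideals $\alpha^k(J)$, and one cannot in general also prescribe that $g$ nearly annihilate a given $x \in D_+$ of norm one. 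The correct move --- and the one the paper takes --- is to \emph{not} prove condition (e) of the definition directly, but to verify the weaker hypotheses of Lemma~\ref{proposition_411} (Proposition~4.11 of \cite{Phillips2014large}): namely finiteness of $D$, the common-minorant condition~(2)(d) (which is exactly Corollary~\ref{piece_3}, using Kishimoto's condition and $\alpha$-primeness), and the approximation condition \emph{without} the norm estimate. That proposition then upgrades to full largeness using finiteness; your closing remark that finiteness enters ``following the pattern of [Thm.~7.10]'' gestures at this, but the pattern is precisely to avoid constructing $g$ orthogonal to $x$, which your sketch still attempts.

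Two secondary points. First, applying Lemma~\ref{piece_2} to ``the ideal of $C$ generated by $e_k$'' does not yield $s_k \precsim_C e_k$ (membership in the generated ideal does not imply Cuntz subequivalence to the generator); the paper instead uses $I_k = \{c \in C \colon c^*c \precsim_C e_k\}$, which is a closed ideal by \cite[Lem.~3.3]{FLL2022tracial} and makes $s_k \precsim_C e_k$ automatic, with no ``refinement'' of Lemma~\ref{piece_2} needed. Second, the definition of a large subalgebra of crossed product type requires the ambient algebra $D = C^*(\mathbb{Z},A,\alpha)$ to be simple and infinite dimensional; you never verify this. Simplicity comes from Theorem~\ref{Kishimoti1981thm} ($\alpha$-simplicity plus Kishimoto's condition), and infinite dimensionality from hypothesis~(\ref{main_thm_assumption_1}), both of which should be recorded before anything else.
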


We verify the conditions of the following lemma
to prove Theorem~\ref{main_thm}.
It is included in Proposition 4.11 of \cite{Phillips2014large}.

\begin{lemma}\label{proposition_411}
	Let $D$ be an infinite dimensional simple unital C*-algebra,
	and let $B \subset D$ be a unital C*-subalgebra.
	Let $A \subset B$ be a C*-subalgebra,
	and let $G$ be a subset of the unitary group of $D$.
	Assume that the following conditions are satisfied:
	\begin{enumerate}
		\item $D$ is finite.
		\item \begin{enumerate}
			\item $A$ contains the identity of $D$.
			\item $A$ and $G$ generate $D$ as a C*-algebra.
			\item $uAu^* \subset A$ and $u^* Au \subset A$ for all $u \in G$.
			\item For any $x \in D_+ \setminus \{0\}$
				and $y \in B_+ \setminus \{0\}$,
				there exists $z \in B_+ \setminus \{0\}$
				such that $z \precsim_{D} x$ and $z \precsim_B y$.
		\end{enumerate}
		\item For every $m \in \mathbb{Z}_{>0}$,
			$a_1 , a_2 , \ldots , a_m \in D$,
			$\epsilon>0$ and $y \in B_+ \setminus \{0\}$,
			there exists $c_1, c_2, \ldots , c_m \in D$
			and $g \in A$ such that:
			\begin{enumerate}
				\item $0 \leq g \leq 1$.
				\item For $j=1,2,\ldots,m$
					we have $\| c_j - a_j \| < \epsilon$.
				\item For $j=1,2,\ldots,m$
					we have $(1-g) c_j \in B$.
				\item $g \precsim_{B} y$.
			\end{enumerate}
	\end{enumerate}
	Then $B$ is a large subalgebra of $D$ of crossed product type.
\end{lemma}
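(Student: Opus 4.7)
The plan is to derive the full Definition 4.9 of a large subalgebra of crossed product type from the weaker condition~(3) in the hypothesis of the lemma. Structural condition~(1) of the definition transfers verbatim via the hypothesis (2)(a)-(c). The real work is to upgrade the approximation statement (3) --- which lacks the distinguished element $x$, asks only for $g \precsim_B y$, and omits the norm inequality $\|(1-g)x(1-g)\| > 1 - \epsilon$ --- to the full form (2)(a)-(e) of the definition. The implication that $g \precsim_B y$ gives $g \precsim_D y$ is automatic since $B \subset D$, so the only genuinely new content to produce is the norm estimate (e).

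Given input data $(m, a_1, \ldots, a_m, \epsilon, x, y)$ for the definition, the first step is to select a parameter $\delta > 0$ (its exact value is chosen last). Since $\|x\| = 1$, the spectral piece $(x - (1-\delta))_+$ is a nonzero positive element of $D$. Applying hypothesis (2)(d) with this element in place of $x$ and the given $y$, one obtains $z \in B_+ \setminus \{0\}$ satisfying $z \precsim_D (x-(1-\delta))_+$ and $z \precsim_B y$. Next, feed $z$ into hypothesis (3) in place of $y$ (with the same tolerance $\epsilon$) to produce $c_1, \ldots, c_m \in D$ and $g \in A$ satisfying (3)(a)-(c) together with $g \precsim_B z$. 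Transitivity of $\precsim_B$ then gives $g \precsim_B y$, and the inclusion $B \subset D$ gives $g \precsim_D y$. Conditions (a)-(d) of the definition are now in place.

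It remains to produce the norm estimate (e), and this is where all the subtlety lives. The data in hand is $g \precsim_D z \precsim_D (x-(1-\delta))_+$ together with the finiteness of $D$. The intuition is that, in a finite algebra, this Cuntz domination forces $g$ to be essentially localized in the hereditary subalgebra $\overline{(x-(1-\delta))_+ D (x-(1-\delta))_+}$, i.e.\ concentrated on the spectral region where $x$ is close to $1$. The plan is to combine a standard finite-algebra lemma in the spirit of R\o rdam --- producing $w \in D$ with $w^* w$ approximately equal to $g$ (at the level of Cuntz comparison) and $w w^*$ sitting inside that hereditary subalgebra --- with a functional-calculus decomposition of $x$ into the top piece where $x \geq 1 - \delta/2$ and its complement. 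The top piece is approximately unaffected by conjugation with $1 - g$, which yields a lower bound of the form $\|(1-g)x(1-g)\| \geq 1 - O(\delta)$. Choosing $\delta$ small enough in terms of $\epsilon$ at the outset closes the argument.

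The main obstacle is precisely this last step: extracting a spectral norm estimate from the Cuntz subequivalence $g \precsim_D (x-(1-\delta))_+$. Finiteness of $D$ is essential here, since without it $g$ could ``duplicate mass'' onto the top of the spectrum of $x$ and ruin the estimate. Careful bookkeeping is required to match the approximate localization of $g$ with the functional calculus decomposition of $x$ and to track how the approximation errors in the localization propagate to the final norm bound; this is the only nontrivial analytic input beyond the lemma's own hypotheses.
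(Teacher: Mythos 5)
The paper does not actually prove this lemma: it is quoted as part of Proposition~4.11 of \cite{Phillips2014large}, so the only proof to compare against is the one in that reference. Your reduction is set up the same way as there: the structural conditions transfer verbatim, $g \precsim_B y$ gives $g \precsim_D y$ for free, and the entire content is to manufacture condition (2)(e) of the definition, $\|(1-g)x(1-g)\| > 1-\epsilon$, by feeding a test element of $D_+\setminus\{0\}$ depending on $x$ and $\epsilon$ into hypothesis (2)(d) and then routing the resulting $z \in B_+\setminus\{0\}$ through hypothesis (3). Up to the choice of that test element you are on the cited proof's track.

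The gap is the final step. The claim that, in a finite algebra, $g \precsim_D z \precsim_D (x-(1-\delta))_+$ forces $g$ to be ``essentially localized'' in $\overline{(x-(1-\delta))_+\, D\, (x-(1-\delta))_+}$, so that the top spectral piece of $x$ survives conjugation by $1-g$, is false: Cuntz subequivalence carries no localization information. Take $D$ a UHF algebra, $x = 1$, $g = 1$; then $(x-(1-\delta))_+ = \delta\cdot 1 \sim_D 1$, so $g \precsim_D (x-(1-\delta))_+$ holds and $z \precsim_D \delta\cdot 1$ is no constraint on $z$ at all, yet $(1-g)x(1-g) = 0$. Even the exact form of your ``R{\o}rdam-type'' lemma ($g = w^*w$ with $ww^*$ in the hereditary subalgebra) is vacuous here, since $w$ is then a unitary. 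No bookkeeping repairs this; a different idea is needed, and it is the one in \cite{Phillips2014large}: set $x_1 = (x-(1-\epsilon/2))_+ \ne 0$, use Lemma~\ref{cor_2_5} to find nonzero orthogonal $b_1 \sim_D b_2$ in $\overline{x_1 D x_1}$, and feed $b_1$ (not $x_1$) into hypothesis (2)(d). If the resulting $g$ had $\|(1-g)x(1-g)\| \le 1-\epsilon$, then the identity $x = x^{1/2}(1-g)^2x^{1/2} + x^{1/2}(2g-g^2)x^{1/2}$ and the standard comparison lemmas give $x_1 \precsim_D g \precsim_D b_1$, hence $b_1 \oplus b_2 \sim_D b_1 + b_2 \precsim_D x_1 \precsim_D b_1$; by simplicity this yields $1 \precsim_D b_1$, hence an isometry whose range projection lies in $\overline{b_1 D b_1}$ and is therefore orthogonal to $b_2 \ne 0$, contradicting finiteness of $D$. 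So finiteness enters through an orthogonal-doubling contradiction, not through any localization of $g$.
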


\begin{proof}[Proof of Theorem~\ref{main_thm}]
	Before we verify conditions listed in Lemma~\ref{proposition_411},
	first note that the property \eqref{main_thm_assumption_1} on $C$
	in Theorem~\ref{main_thm} implies that $C$ is infinite dimensional.
	So $C^* (\mathbb{Z}, A, \alpha)$ is an infinite dimensional unital C*-algebra,
	and $C^* (\mathbb{Z}, A, \alpha)$ is simple by Theorem \ref{Kishimoti1981thm}.

	Conditions (1) and (2)(a,b,c) are trivial.
	Condition (2)(d) follows by Corollary~\ref{piece_3}.
	Let $m \in \mathbb{Z}_{>0}$,
	let $a_1 , a_2, \ldots , a_m \in C^* (\mathbb{Z}, A, \alpha)$,
	let $\epsilon>0$,
	and let $y \in B_+ \setminus \{0\}$.
	Pick $c_j \in C_c(\mathbb{Z}, A, \alpha)$ such that
	$\| c_j - a_j\| < \epsilon$ for $j=1, 2, \cdots, m$.
	One may write
	\[
	 c_j = \sum_{k = -n}^n d_{j,k} u^k ,
	 \quad \text{where} \enspace d_{j,k} \in A \enspace \text{for} \enspace
	 j = 1, 2, \ldots, m \enspace \text{and} \enspace k = -n , \ldots, n .
	\]
	Using Corllary~\ref{piece_3} to	choose
	$a \in A_{+} \setminus \{0\}$ such that $a \precsim_{B} y$.
	By our assumption \eqref{main_thm_assumption_1} and
	\eqref{main_thm_assumption_2} on $C$ and
	Lemma~\ref{cuntz_comp_lem} \eqref{cuntz_comp_lem_6},
	there exists nonzero positive elements $e_k \in C$
	for $k = -n , -n+1, \ldots , n$,
	such that $e_{-n} \oplus e_{-n+1} \oplus \cdots \oplus e_n \precsim_{B} a$.
	For each $k=-n, -n+1, \cdots, n$,
	let $I_k = \{ c \in C : c^* c \precsim_{C} e_k\}$.
	Then $I_k$ is a closed ideal of $C$ by \cite[Lem.~3.3]{FLL2022tracial}.
	Apply Proposition~\ref{piece_2} to $I_k$ and $k$,
	getting positive contractions $r_k , s_k \in C$ such that
	$r_k \precsim_{B} s_k \in I_k$ and $1-r_k \in \alpha^k(J|_C)$.
	Using Lemma~\ref{cuntz_comp_lem} \eqref{cuntz_comp_lem_7}
	at the first two steps, we get
	\begin{equation}\label{equ_MainThm}
		\bigoplus_{k=-n}^n r_k \precsim_B \bigoplus_{k=-n}^n s_k
		\precsim_B \bigoplus_{k=-n}^n e_k 
		\precsim_B a \precsim_B y .
	\end{equation}
	By Lemma~\ref{J_prop} and Lemma~\ref{lem24},
	there exists $h \in C$ with $0 \leq h \leq 1$ such that
	$\{\alpha^k(h)\}_{k=-n}^n$ are pairwise orthogonal
	and $1-h \in J|_C$.
	Set \[
		f = \alpha^{-n} (r_{n}) \alpha^{-n+1}(r_{n-1})
			\cdots \alpha^{n}(r_{-n})h
		\quad \text{and} \quad g_0 = f^* f \,.\]
	Let $g_k = \alpha^{k} (g_0)$.
	Then $g_k \precsim_B r_k$ by \cite[Prop.~2.4]{Rordam1992structure_II}
	and $g_k g_j = 0$ for $k \ne j$.
	Let $g = \sum_{k=-n}^n g_k$.
	Then $0 \leq g \leq 1$.
	Using Lemma~\ref{cuntz_comp_lem} \eqref{cuntz_comp_lem_5} at the first step, 
	Lemma~\ref{cuntz_comp_lem} \eqref{cuntz_comp_lem_7} at the second step
	and \eqref{equ_MainThm} at the last step,
	we get
	\[
		g \precsim_B \bigoplus_{k=-n}^n g_k 
		\precsim_B \bigoplus_{k=-n}^n r_k
		\precsim_B y .
	\]
	Note that $1 - h$ and $1 - \alpha^{-k}(r_k) \in J$
	for $k=-n, -n+1, \ldots, n$.
	So $1 - g_0 \in J$ and then $1 - g_k \in \alpha^k(J)$ for all $k$.
	Therefore by orthogonality of $(g_k)$, we have
	\[
	1 - g = (1 - g_{-n}) (1 - g_{-n + 1}) \cdots (1 - g_{n})
	\in \bigcap_{k=-n}^n \alpha^k(J) .
	\]
	By Proposition~\ref{piece_1},
	we have $(1-g)c_j \in B$ for all $j = 1, 2, \ldots, m$.
	This completes the verification of condition (3)
	and the proof of the theorem.
\end{proof}

\section{$C(X)$-algebras}\label{bundle_section}

Let $X$ be a compact Hausdorff space.
For a C*-algebra $B$,
we denote by $Z(B)$ the center of $B$.
Recall that a $C(X)$-algebra is a C*-algebra $A$
endowed with a unital $*$-homomorphism $\eta \colon C(X) \to Z(M(A))$,
where $M(A)$ is the multiplier algebra of $A$.
We shall refer to the map $\eta$ as the \emph{structure map}.

For any closed set $Y \subset X$,
we identify $C_0(X \setminus Y)$ with the ideal of
continuous functions vanishing on $Y$.
Then $\eta(C_0(X \setminus Y))A$ is a closed ideal of $A$
by Cohen's Factorization Theorem
(see \cite[Thm.~II.5.3.7]{Blackadar2006Theory}).
The quotient of $A$ by this ideal is denoted by $A(Y)$
and the quotient map is denoted by $\pi_Y \colon A \to A(Y)$.
If $Y$ reduces to a point $x$,
we write $C_0(X \setminus x)$ for $C_0(X \setminus \{x\})$,
$A(x)$ for $A(\{x\})$, and $\pi_x$ for $\pi_{\{x\}}$.
One may think of the quotient $A(x)$ as the fiber of $A$ over $x$.
If $a \in A$, we write $a(x)$ for $\pi_x(a)$
and think of $a$ as a function from $X$ to $\coprod_{x \in X} A(x)$.

Suppose that $(A, \eta)$ is a $C(X)$-algebra.
We say that $B$ is a $C(X)$-subalgebra of $(A, \eta)$
if $\eta(C(X))B \subset B$.
Closed ideals of $A$ are always $C(X)$-subalgebras.
Let $F \subset A$, and let $a \in A$.
If $\delta > 0$,
we write $a \in_{\delta} F$ if there exists $b \in F$ such that
$\|a - b\| < \delta$.
The following lemma is well known and sometimes will be used tacitly;
see \cite[Lem.~2.1]{Dadarlat2009continuous}
and \cite[Prop.~C.10]{Williams2007Crossed}.

\begin{lemma}\label{Cast_bundle_lem}
	Let $(A,\eta)$ be a $C(X)$-algebra and
	let $B \subset A$ be a $C(X)$-subalgebra.
	Let $a \in A$ and let $Y \subset X$ be a closed subset.
	\begin{enumerate}
		\item\label{Cast_bundle_lem_semiconti}
			The map $x \mapsto \|a(x)\|$ is upper semicontinuous.
		\item\label{Cast_bundle_lem_norm}
			$\|\pi_Y(a)\| = \max_{x \in Y}\|a(x)\|$.
		\item\label{Cast_bundle_lem_fiberwise_in}
			If $a(x) \in \pi_x(B)$ for all $x \in X$,
			then $a \in B$.
		\item\label{Cast_bundle_lem_fiberwise_indelta}
			If $\delta > 0$ and
			$a(x) \in_{\delta} \pi_x(B)$ for all $x \in X$,
			then $a \in_{\delta} B$.
		\item\label{Cast_bundle_lem_isom}
			The restriction of $\pi_x \colon A \to A(x)$ to $B$
			induces an $*$-isomorphism
			$B(x) \cong \pi_x(B)$ for all $x \in X$.
		\item\label{Cast_bundle_lem_eval}
			If $f \in C(X)$, then $\pi_x(\eta(f)a) = f(x) \pi_x(a)$
			in $A(x)$ for all $x \in X$.
	\end{enumerate} 
\end{lemma}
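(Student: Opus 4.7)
The plan is to prove the six parts in roughly the order (6), (1), (2), (4), (3), (5), since each uses the previous ones, and to invoke standard bundle-theoretic facts about $C(X)$-algebras where convenient.

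Statement (6) is almost by definition: since $f - f(x) \in C_0(X \setminus x)$ and $\eta$ lands in $Z(M(A))$, the element $\eta(f - f(x))a$ sits in the ideal defining $A(x)$, so $\pi_x(\eta(f)a) = f(x)\pi_x(a)$. For (1), given $x_0$ and $\epsilon > 0$, Cohen factorization yields $g \in C_0(X \setminus x_0)$ and $b \in A$ with $\|a - \eta(g)b\| < \|a(x_0)\| + \epsilon$; applying $\pi_x$ and (6) gives $\|a(x)\| \leq \|a - \eta(g)b\| + |g(x)|\|b\|$, which falls below $\|a(x_0)\| + 2\epsilon$ on a neighborhood of $x_0$ since $g$ is continuous with $g(x_0) = 0$. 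For (2), one inequality is immediate because $\pi_y$ factors through $\pi_Y$ for each $y \in Y$. For the reverse I would take as given the $Y = X$ version $\|a\| = \max_x \|a(x)\|$ (standard for the bundle representation of a $C(X)$-algebra); then for general closed $Y$ use (1) and normality of $X$ to construct $h \in C(X)$ with $h|_Y \equiv 1$, $0 \leq h \leq 1$, and $\max_x h(x)\|a(x)\|$ arbitrarily close to $\max_{y \in Y}\|a(y)\|$. Since $1 - h \in C_0(X \setminus Y)$, one has $\pi_Y(a) = \pi_Y(\eta(h)a)$ and $\|\pi_Y(a)\| \leq \|\eta(h)a\| = \max_x h(x)\|a(x)\|$.

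The heart of the lemma, and the main potential obstacle, is (4). Given $\delta > 0$ and the hypothesis $a(x) \in_{\delta} \pi_x(B)$ for each $x$, pick $b_x \in B$ with $\|a(x) - \pi_x(b_x)\| < \delta$; by (1) the set $U_x = \{y : \|(a - b_x)(y)\| < \delta\}$ is open and contains $x$. Compactness of $X$ yields a finite subcover $U_{x_1}, \ldots, U_{x_n}$; choose a partition of unity $\phi_1, \ldots, \phi_n \in C(X)$ subordinate to it and set $b = \sum_i \eta(\phi_i)b_{x_i}$, which lies in $B$ because $B$ is a $C(X)$-subalgebra. Then $(a - b)(y) = \sum_i \phi_i(y)(a - b_{x_i})(y)$ has norm at most $\sum_i \phi_i(y)\delta = \delta$ for every $y$, so (2) with $Y = X$ gives $\|a - b\| \leq \delta$. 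Statement (3) then follows from (4) by applying it with $\delta$ arbitrarily small and using closedness of $B$.

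For (5), the restriction of $\pi_x$ to $B$ annihilates $\eta(C_0(X \setminus x))B$, so it descends to a surjection $B(x) \to \pi_x(B)$; injectivity amounts to proving $B \cap \eta(C_0(X \setminus x))A \subset \eta(C_0(X \setminus x))B$. Given $b \in B$ with $\pi_x(b) = 0$, use (1) to pick a neighborhood $U$ of $x$ with $\|b(y)\| < \epsilon$ on $U$, and choose $f \in C(X)$ with $0 \leq f \leq 1$, $f \equiv 1$ off $U$, $f \equiv 0$ near $x$; then $\eta(f)b \in \eta(C_0(X \setminus x))B$, and by (6) and (2), $\|b - \eta(f)b\| = \max_y |1 - f(y)|\|b(y)\| \leq \epsilon$, so $b$ lies in the closed ideal $\eta(C_0(X \setminus x))B$, completing the argument.
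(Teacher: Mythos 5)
Your proposal is correct, but there is nothing in the paper to compare it against line by line: the authors give no proof of this lemma at all, instead declaring it ``well known'' and citing Dadarlat and Williams. What you have written is essentially a reconstruction of the standard arguments from those references, and the logical ordering (6)$\to$(1)$\to$(2)$\to$(4)$\to$(3)$\to$(5) is sound: (6) is indeed immediate from $f - f(x)1 \in C_0(X\setminus x)$; the Cohen-factorization argument for upper semicontinuity is the standard one; the partition-of-unity argument for (4) correctly exploits that $\sum_i \phi_i = 1$ and that each summand with $\phi_i(y) > 0$ has norm below $\delta$, so the convex combination does too (in fact strictly, which is what you need since $\in_\delta$ is defined with strict inequality -- your ``$\leq \delta$'' should be ``$<\delta$'', or run the argument with a smaller tolerance); and for (5) you correctly reduce injectivity to the inclusion $B \cap \eta(C_0(X\setminus x))A \subset \eta(C_0(X\setminus x))B$ and prove it by the Urysohn/approximation argument. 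The one place your write-up is not self-contained is the identity $\|a\| = \max_{x}\|a(x)\|$, which you import to prove (2); note that this is precisely the case $Y = X$ of (2) itself, so strictly speaking your proof of (2) is a reduction of the general case to this special case rather than a proof. That special case is the genuinely nontrivial foundational fact about $C(X)$-algebras (its usual proof combines a partition of unity with the estimate $\|\sum_i \eta(\phi_i)^{1/2} d_i \eta(\phi_i)^{1/2}\| \leq \max_i\|d_i\|$), and it is exactly what the references cited in the paper establish, so leaning on it is defensible here -- but you should cite it explicitly rather than prove (2) by appeal to an instance of (2).
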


\begin{definition}
	Let $X$ be a compact Hausdorff space,
	and let $(A,\eta)$ be a unital $C(X)$-algebra.
	Let $G$ be a locally compact group.
	Let $T \colon G \to \mathrm{Homeo}(X)$ be an action on $X$.
	We say that an action $\alpha \colon G \to \Aut(A)$ \emph{lies over} $T$
	if $\eta(f \circ T_{g}^{-1}) = \alpha_{g}(\eta(f))$
	for all $f \in C(X)$ and all $g \in G$.
\end{definition}
Let $u$ denote the canonical unitary in $C^* (\mathbb{Z}, A, \alpha)$.
Then $u \eta(f) u^* = \alpha(\eta(f)) = \eta(f \circ h^{-1})$
for all $f \in C(X)$.

Let $A$ be a unital $C(X)$-algebra.
The fact that $\alpha$ lies over $h$ implies that
for any closed set $Y \subset X$ and any $n \in \mathbb{Z}$,
there is a $*$-isomorphism $\alpha_{n, Y} \colon A(h^{-n}Y) \to A(Y)$
such that \[
\pi_Y(\alpha^n(a)) = \alpha_{n, Y}(\pi_{h^{-n}Y}(a))
\qquad \text{for all} \quad a \in A .
\]
In fact, if $a,b \in A$ such that $a - b \in C_0(X \setminus h^{-n}Y)A$,
then $\alpha(a - b) \in C_0(X \setminus Y) A$.
It is easy to see that the well-defined map
$\alpha_Y \colon A(h^{-1}Y) \to A(Y)$ is a $*$-isomorphism,
bacause $\alpha$ is a $*$-isomorphism of $A$ such that
$\alpha^n [C_0(X \setminus h^{-n}Y)A] = C_0(X \setminus Y)A$.
If $Y$ reduces to a point $x$,
we get a $*$-isomorphism $\alpha_{n, x}$ from $A(h^{-n}x)$ to $A(x)$
such that \[
	\alpha(a)(x) = \alpha_{n, x}(a(h^{-n}x))
	\qquad \text{for all} \quad a \in A
\]
and $\alpha_{n + m, x} = \alpha_{n,x} \circ \alpha_{m, h^{-n}x}$
for all $n,m \in \mathbb{Z}$.

\begin{lemma}\label{bundle_Kishimoto}
	Let $X$ be an infinite compact Hausdorff space,
	and let $(A,\eta)$ be a unital $C(X)$-algebra.
	Suppose that $h: X \to X$ is a minimal homeomorphism.
	If $\alpha \in \Aut(A)$ lies over $h$,
	then $\alpha$ satisfies the Kishimoto's condition.
\end{lemma}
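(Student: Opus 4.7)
The approach exploits two facts: the centrality of $\eta(C(X))$ in $M(A)$, and the automatic freeness of $h$. Freeness holds because $X$ is infinite and $h$ is minimal, so any periodic orbit would be a finite closed invariant subset of $X$ equal to $X$, a contradiction. With freeness in hand, I would build the witness for Kishimoto's condition in the form $c = \eta(f)\, x\, \eta(f)$, where $f \in C(X)$ is a bump function supported near a point at which $\|x(\cdot)\|$ is maximal.

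Given a positive $x \in A$ with $\|x\| = 1$, a finite set $F \subset \mathbb{Z} \setminus \{0\}$, a finite set $S \subset A$, and $\epsilon > 0$, I would first use upper semicontinuity (Lemma~\ref{Cast_bundle_lem}(\ref{Cast_bundle_lem_semiconti})) and compactness of $X$ to pick $x_0 \in X$ with $\|x(x_0)\| = 1$. Freeness gives $h^k(x_0) \ne x_0$ for every $k \in F$, so Hausdorffness yields, for each such $k$, an open neighborhood of $x_0$ disjoint from its $h^k$-image; intersecting these finitely many neighborhoods produces an open $U \ni x_0$ with $U \cap h^k(U) = \emptyset$ for all $k \in F$. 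Urysohn's lemma, applied in the normal space $X$, then provides $f \in C(X)$ with $0 \le f \le 1$, $f(x_0) = 1$, and $\supp(f) \subset U$.

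Setting $c = \eta(f) x \eta(f)$, the identity $\alpha^k(\eta(f)) = \eta(f \circ h^{-k})$ and the centrality of $\eta(C(X))$ in $M(A)$ let me collect all $\eta$-factors, giving $cxc = \eta(f^4)\, x^3$ and $cb\alpha^k(c) = \eta\bigl(f^2 (f \circ h^{-k})^2\bigr)\, x b \alpha^k(x)$ for every $k \in F$ and $b \in S$. Evaluating at $x_0$ via Lemma~\ref{Cast_bundle_lem}(\ref{Cast_bundle_lem_eval}) yields $\|c\| = 1$ and $\|cxc\| \ge \|x(x_0)^3\| = 1 > 1 - \epsilon$, while the support disjointness $\supp(f) \cap h^k(\supp(f)) = \emptyset$ forces $f^2 (f \circ h^{-k})^2 \equiv 0$ on $X$, so $cb\alpha^k(c) = 0$ identically, which is much stronger than the required $< \epsilon$.

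No step of this plan looks difficult; the only thing requiring care is tracking that $\eta(f) \in Z(M(A))$ so that it genuinely commutes past the noncommutative factors $x$, $b$, and $\alpha^k(x)$, since this is what makes the $\eta$-collection step legal. This is the one place the $C(X)$-algebra structure is used, and it is what lets the classical commutative argument (where one would simply take $c = \eta(f)$) go through in the present noncommutative setting.
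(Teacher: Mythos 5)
Your argument is correct and follows essentially the same route as the paper: locate $x_0$ with $\|x(x_0)\|=1$, use aperiodicity (forced by minimality on an infinite space) to separate $U$ from its $h^k$-translates, and take a central bump function $\eta(f)$ supported in $U$ so that the cross terms vanish exactly. The only cosmetic difference is that the paper takes the witness to be $\eta(f)$ itself rather than $\eta(f)x\eta(f)$; both choices work for the same reason.
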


\begin{proof}
	Given $\epsilon > 0$, finite subset $S \subset A$,
	$a \in A_+$ with $\|a\| = 1$, and $n \in \mathbb{Z}_{>0}$,
	we want $r \in A_+$ with $\|r\| = 1$ such that
	$\|rb \alpha^k(r)\| < \epsilon$ for all
	$k \in \{-n , \ldots, n\} \setminus \{0\}$ and all $b \in F$,
	and $\|rar\| > 1 - \epsilon$.
	Since $\|a\| = 1$ and $X$ is compact,
	there exists $x_0 \in X$ such that $\|a(x_0)\| = 1$.
	Since $(X,h)$ is an infinite minimal system,
	there are no periodic points in $X$.
	Choose an open neighborhood $U$ of $x_0$ such that
	the sets $h^{k}(U)$ for $|k| \leq n$ are pairwise disjoint.
	Let $f \in C(X)$ satisfy $0 \leq f \leq 1$,
	$f(x_0) = 1$ and $\supp(f) \subset U$.
	Then $\alpha^k(\eta(f)) = \eta( f \circ h^{-k})$ for $|k| \leq n$
	are pairwise orthogonal in $Z(A)$.
	It is easy to check that $r = \eta(f)$ satisfies the requirements by
	Lemma~\ref{Cast_bundle_lem} \eqref{Cast_bundle_lem_norm}
	and \eqref{Cast_bundle_lem_eval}.
\end{proof}

A $C(X)$-algebra $(A, \eta)$ such that
$x \mapsto \|a(x)\|$ is continuous for all $a \in A$ is
called a continuous $C(X)$-algebra.
A continuous $C(X)$-algebra is the section algebra of a continuous C*-bundle;
see \cite{Lee1976Cast} and \cite{FD1988Representation_I}.
Similar notions appeared in \cite[Def.~4.5.1]{Phillips1987Equivariant}
and \cite[Def.~1.1]{KW1995operations}.
From now on, we omit $\eta$ when the underlying $*$-homomorphism
$C(X) \to Z(M(A))$ is clear and write $fa$ rather than
$\eta(f)a$ for $f \in C(X)$ and $a \in A$.

\begin{lemma}\label{bundle_minimal}
	Let $X$ be an compact Hausdorff space,
	and let $A$ be a continuous unital $C(X)$-algebra such that
	all the fibers are nonzero and simple.
	Suppose that $h: X \to X$ is a minimal homeomorphism.
	If $\alpha \in \Aut(A)$ lies over $h$,
	then $\alpha$ is minimal.
\end{lemma}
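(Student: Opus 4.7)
The plan is to translate $\alpha$-invariance of a closed ideal into an $h$-invariance condition on a certain closed subset of $X$, and then invoke minimality of $h$.

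Let $I$ be a nonzero $\alpha$-invariant closed ideal of $A$. Because the image of the structure map $\eta$ sits inside the center of $M(A)$, $I$ is automatically a $C(X)$-subalgebra, so each fibre $I(x) = \pi_x(I)$ is a closed ideal of $A(x)$. By assumption $A(x)$ is simple, hence $I(x) \in \{0, A(x)\}$ for every $x \in X$. Define
\[
Y = \{ x \in X : I(x) = 0 \} .
\]
The game is to prove $Y$ is closed and $h$-invariant, so that minimality of $h$ forces $Y = \emptyset$ or $Y = X$.

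For closedness, I would note that $X \setminus Y = \bigcup_{a \in I} \{ x \in X : \|a(x)\| > 0\}$. This is where continuity of the $C(X)$-algebra is essential: the function $x \mapsto \|a(x)\|$ is continuous (not merely upper semicontinuous as in Lemma~\ref{Cast_bundle_lem}~\eqref{Cast_bundle_lem_semiconti}), so each set on the right is open, and hence $Y$ is closed. For $h$-invariance, I would use the fibrewise form of $\alpha$ recorded just before Lemma~\ref{bundle_Kishimoto}, namely $\alpha(a)(x) = \alpha_{1,x}(a(h^{-1}x))$ for every $a \in A$, where $\alpha_{1,x} \colon A(h^{-1}x) \to A(x)$ is a $*$-isomorphism. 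Since $\alpha(I) = I$, evaluating at $x$ yields $I(x) = \alpha_{1,x}(I(h^{-1}x))$, and the fact that $\alpha_{1,x}$ is an isomorphism gives $I(x) = 0 \iff I(h^{-1}x) = 0$. Thus $Y = h^{-1}(Y)$, so $Y$ is $h$-invariant.

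By minimality of $h$, either $Y = X$ or $Y = \emptyset$. In the first case $\|a(x)\| = 0$ for every $a \in I$ and every $x \in X$, and Lemma~\ref{Cast_bundle_lem}~\eqref{Cast_bundle_lem_norm} (with $Y = X$) gives $a = 0$, so $I = 0$. In the second case $I(x) = A(x)$ for every $x$, so every $a \in A$ satisfies $a(x) \in \pi_x(I)$, and Lemma~\ref{Cast_bundle_lem}~\eqref{Cast_bundle_lem_fiberwise_in} delivers $a \in I$, i.e.\ $I = A$. This contradicts the assumption that $I$ is nontrivial, finishing the proof. I do not anticipate a serious obstacle; the only place where genuine care is needed is the closedness of $Y$, which is precisely where the continuity hypothesis on the $C(X)$-algebra is consumed (upper semicontinuity alone would not suffice to make $\{\|a(x)\|>0\}$ open).
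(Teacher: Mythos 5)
Your proof is correct and follows essentially the same route as the paper's: both reduce to the zero set $Y=\{x : I(x)=0\}$ of the invariant ideal, show it is closed (using continuity of the $C(X)$-algebra) and $h$-invariant, and then apply minimality of $h$ together with simplicity of the fibers and Lemma~\ref{Cast_bundle_lem}. The only cosmetic difference is that you derive the invariance of $Y$ from the fibrewise isomorphisms $\alpha_{1,x}$ recorded before Lemma~\ref{bundle_Kishimoto}, whereas the paper reruns the underlying Cohen-factorization argument directly on elements of the ideal.
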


\begin{proof}
	Suppose that $B$ is a closed $\alpha$-invariant ideal of $A$.
	We claim that $F = \{x \in X \colon B(x) = 0\}$ is an
	$h$-backward invariant closed set.
	Indeed, if $x \in F$, then $b(x) = 0$ for all $b \in B$.
	Since $B$ is $\alpha$-invariant,
	we have $\alpha(b)(x) = 0$ for all $b \in B$.
	We may assume that $\alpha(b) = f \cdot a$
	for some $f \in C_0(X \setminus x)$ and $a \in A$
	by Cohen's Factorization Theorem.
	We have $b = \alpha^{-1}(f) \alpha^{-1}(a)
	= (f \circ h) \alpha^{-1}(a) \in C_0(X \setminus h^{-1}x)A$,
	from which it follows that $b(h^{-1}x) = 0$ for all $b \in B$.
	So $h^{-1}x \in F$ and $h^{-1}(F) \subset F$.
	Since $A$ is a continuous $C(X)$-algebra, $F$ is closed.
	Minimality of $h$ ensures that $F = \varnothing$ or $F = X$.
	If $F = \varnothing$,
	then $B(x) = A(x)$ by the simplicity of $A(x)$ for all $x \in X$.
	So $B = A$ by Lemma~\ref{Cast_bundle_lem}
	\eqref{Cast_bundle_lem_fiberwise_in}.
	Otherwise $F = X$ and we have $B = 0$
	by Lemma~\ref{Cast_bundle_lem} \eqref{Cast_bundle_lem_norm}.
\end{proof}

\begin{proposition}\label{bundle_simple}
	Let $X$ be an infinite compact Hausdorff space,
	and let $A$ be a continuous unital $C(X)$-algebra such that
	all the fibers are nonzero and simple.
	Suppose that $h \colon X \to X$ is a minimal homeomorphism
	and $\alpha \in \Aut(A)$ lies over $h$.
	Then $C^*(\mathbb{Z}, A, \alpha)$ is simple.
\end{proposition}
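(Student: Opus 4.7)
The plan is to apply Theorem~\ref{Kishimoti1981thm}, which gives simplicity of $C^*(\mathbb{Z}, A, \alpha)$ from two ingredients: $\alpha$-simplicity of $A$ and Kishimoto's condition for $\alpha$. Fortunately, both ingredients are already prepared in the immediately preceding lemmas.

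First, I would verify Kishimoto's condition by invoking Lemma~\ref{bundle_Kishimoto}. That lemma requires only that $X$ be infinite, $A$ be a unital $C(X)$-algebra, $h$ be minimal, and $\alpha$ lie over $h$; all of these are hypotheses of the proposition. (Continuity of the bundle and simplicity of the fibers are not needed here.)

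Next, I would verify $\alpha$-simplicity of $A$ by invoking Lemma~\ref{bundle_minimal}. This lemma needs continuity of the $C(X)$-algebra structure (so that the set of fibers on which a closed $\alpha$-invariant ideal vanishes is closed), simplicity of each fiber (so that this vanishing set is $h$-backward invariant and consists of ``all or nothing'' fiberwise), minimality of $h$, and $\alpha$ lying over $h$. Again, all of these are given.

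With $A$ being $\alpha$-simple and $\alpha$ satisfying Kishimoto's condition, Theorem~\ref{Kishimoti1981thm} yields that $C^*(\mathbb{Z}, A, \alpha)$ is simple. There is no real obstacle here; the proposition is essentially a bookkeeping corollary of the two preceding lemmas combined with Kishimoto's theorem, which is the reason those lemmas were stated in the form they were. The proof is therefore just a three-line citation chain.
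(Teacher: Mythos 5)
Your proposal is correct and is exactly the paper's proof: the paper's argument for this proposition is precisely ``Combine Lemma~\ref{bundle_Kishimoto}, Lemma~\ref{bundle_minimal} and Theorem~\ref{Kishimoti1981thm}.'' Your additional remarks on which hypotheses feed into which lemma are accurate and only make the citation chain more explicit.
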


\begin{proof}
	 Combine Lemma~\ref{bundle_Kishimoto}, Lemma \ref{bundle_minimal}
	 and Theorem \ref{Kishimoti1981thm}.
\end{proof}

\begin{notation}\label{not_Y_orbit_subbrkalg}
	Let $X$ be an infinite compact Hausdorff space,
	and let $A$ be a continuous unital $C(X)$-algebra such that
	all the fibers are nonzero and simple.
	Suppose that $h \colon X \to X$ is a minimal homeomorphism
	and $\alpha \in \Aut(A)$ lies over $h$.
	Let $u \in C^* (\mathbb{Z}, A, \alpha)$ be the standard unitary.
	Following the notation of Convention \ref{convention_coefficient_ideal},
	for a closed subset $Y \subset X$
	we defined the C*-subalgebra $C^* (\mathbb{Z}, A, \alpha)_{Y}$ to be \[
	C^* (\mathbb{Z}, A, \alpha)_{Y} = C^* (A, C_0(X \setminus Y)Au)
	\subset C^* (\mathbb{Z}, A, \alpha) .
	\]
	We call it the $Y$-\emph{orbit breaking subalgebra}
	of $C^* (\mathbb{Z}, A, \alpha)$.
	If $Y$ reduces to a point $y$, we write $C^* (\mathbb{Z}, A, \alpha)_{y}$
	for $C^* (\mathbb{Z}, A, \alpha)_{\{y\}}$.
\end{notation}

The following proposition is a trivial application of Theorem \ref{main_thm}.
So we omit the proof.
We will replace the condition $f \precsim_{B} a$
with the locally triviality of $A$ and an
additional technical condition on $\alpha$ later.
\begin{proposition}\label{bundle_trivial_appl}
	Adopt Notation~\ref{not_Y_orbit_subbrkalg},
	and let $y \in X$.
	Let $B = C^* (\mathbb{Z}, A, \alpha)_{y}$.
	Suppose that $C^* (\mathbb{Z}, A, \alpha)$ is finite,
	and for any $a \in A_+ \setminus \{0\}$,
	there exists $f \in C(X)_+ \setminus \{0\}$ such that $f \precsim_{B} a$.
	Then $B$ is a large subalgebra of $C^* (\mathbb{Z}, A, \alpha)$
	of crossed product type.
\end{proposition}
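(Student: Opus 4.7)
The plan is to apply Theorem~\ref{main_thm} with $C = \eta(C(X)) \subset Z(A)$, the unital copy of $C(X)$ sitting inside $A$ via the structure map, and with $J = C_0(X \setminus y) A$, the closed ideal of $A$ corresponding to the singleton $\{y\}$. The ambient hypotheses come almost for free: $\alpha$ is minimal by Lemma~\ref{bundle_minimal}; $\alpha$ satisfies Kishimoto's condition by Lemma~\ref{bundle_Kishimoto}; and because $\alpha$ lies over $h$, the restriction $\alpha|_C$ corresponds under the identification $C \cong C(X)$ to pullback by $h^{-1}$, so it is an automorphism of $C$ and $C$ is $\alpha|_C$-simple exactly because $h$ is minimal. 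The ideal $J$ is nontrivial since $X$ is infinite and every fiber of $A$ is nonzero, and evaluating at $y$ identifies $J \cap C$ with the nontrivial ideal $C_0(X \setminus y)$ of $C$.

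Next I would verify the hypotheses of Lemma~\ref{J_prop} for $J|_C$. Fix $k \in \mathbb{Z} \setminus \{0\}$. Since $(X, h)$ is an infinite minimal system, $h$ has no periodic points, so $h^k y \ne y$; Hausdorffness furnishes an open neighborhood $U$ of $y$ with $U \cap h^{-k}(U) = \varnothing$ (take $U = W_1 \cap h^{-k}(W_2)$ for disjoint open neighborhoods $W_1$ of $y$ and $W_2$ of $h^k y$). Any $f \in C(X)$ with $0 \leq f \leq 1$, $f(y) = 1$, and $\supp(f) \subset U$ then satisfies $\alpha^{-k}(f) = f \circ h^k \in C_0(X \setminus y)$, $1 - f \in C_0(X \setminus y)$, and $f \cdot \alpha^{-k}(f) = 0$, which in the commutative algebra $C$ upgrades to $f C \alpha^{-k}(f) = 0$.

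Then I would verify the two remaining conditions on $C$. An infinite minimal system has no isolated points, so for any nonzero $c \in C(X)_+$ the open set $\{c > 0\}$ is infinite and admits $n$ pairwise disjoint nonempty open subsets; bump functions on these subsets provide $n$ pairwise orthogonal nonzero positive elements in $\overline{cCc}$, settling condition~\eqref{main_thm_assumption_1}. Condition~\eqref{main_thm_assumption_2} is exactly the assumption of the proposition, and finiteness of $C^*(\mathbb{Z}, A, \alpha)$ is also assumed. Theorem~\ref{main_thm} now applies and yields that $B = C^*(\mathbb{Z}, A, \alpha)_y = C^*(\mathbb{Z}, A, \alpha)_J$ is a large subalgebra of crossed product type.

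The only mildly delicate point is the verification of Lemma~\ref{J_prop}'s hypothesis: one must choose the bump function so that its support and its image under $h^k$ are disjoint while it still equals $1$ at $y$, and one must keep track of the fact that the automorphism $\alpha^{-k}$ acts on $C$ through precomposition by $h^k$. Everything else is either a direct invocation of earlier lemmas or a standard fact about infinite minimal systems.
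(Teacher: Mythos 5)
Your proposal is correct and is exactly the route the paper intends: the paper omits the proof of Proposition~\ref{bundle_trivial_appl}, calling it a trivial application of Theorem~\ref{main_thm}, and your argument supplies precisely the missing verifications (taking $C = \eta(C(X))$, $J = C_0(X\setminus y)A$, checking the hypotheses of Lemma~\ref{J_prop} via a bump function at $y$ whose support is moved off itself by $h^{k}$, and invoking Lemmas~\ref{bundle_Kishimoto} and~\ref{bundle_minimal} for Kishimoto's condition and minimality). No gaps.
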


Morphisms between $C(X)$-algebras are $C(X)$-linear $*$-homomorphisms.
Let $A$ be a $C(X)$-algebra.
Recall that $A$ is trivial if there exists a C*-algebra $D$ such that
$A$ is isomorphic to $C(X,D)$ as $C(X)$-algebra.
We say that $A$ is locally trivial if for every $x \in X$
there is a compact neighborhood $K$ of $x$
such that $A(K)$ is trivial as a $C(K)$-algebra.
It is obvious that every locally trivial $C(X)$-algebra is continuous.

Let $\phi_x : A(K_x) \to C(K_x , D)$ be a local trivialization near $x \in X$,
and let $\mathrm{int}(K_x)$ denote the interior of $K_x$.
A compactness and shrinking argument gives a finite collection
of local trivializations $\{(K_j , \phi_j)\}$ such that the union of the sets
$\mathrm{int}(K_j)$ covers $X$.
We call such $\{(K_j, \phi_j)\}$ an \emph{atlas} of $A$.
For a closed subset $L \subset K_j$,
denote by $\phi_j |_L$ the $*$-isomorphism from $A(L)$ to $C(L,D)$
induced by $\phi_j$.
In particular, $\phi_j |_x$ gives a $*$-isomorphism from $A(x)$ to $D$.

Since we already know that all fibers at the same $h$-orbit are $*$-isomorphic
when $h$ is minimal and $\alpha$ lies over $h$,
the following lemma is trivial.

\begin{lemma}\label{lem_locally_trivial_and_lie_over}
	Let $X$ be an infinite compact Hausdorff space,
	and let $A$ be a unital $C(X)$-algebra.
	Suppose that $h \colon X \to X$ is a minimal homeomorphism
	and $\alpha \in \Aut(A)$ lies over $h$.
	If $A$ is locally trivial at some $x_0 \in X$,
	then $A$ is locally trivial,
	and all fibers of $A$ are $*$-isomorphic to each other.
\end{lemma}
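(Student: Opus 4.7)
The plan is to push a trivialization at $x_0$ around the $h$-orbit by $\alpha$ and then to invoke minimality and compactness to cover $X$. Fix a compact neighborhood $K_0$ of $x_0$ and a $C(K_0)$-algebra isomorphism $\phi_0 \colon A(K_0) \to C(K_0, D)$ for some C*-algebra $D$, and write $U_0 = \mathrm{int}(K_0)$.

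First I would transport this local trivialization along the orbit. For each $n \in \mathbb{Z}$ set $K_n = h^n(K_0)$; this is a compact neighborhood of $h^n(x_0)$. Because $\alpha$ lies over $h$, the discussion preceding Lemma~\ref{bundle_Kishimoto} produces a $*$-isomorphism $\alpha_{n, K_n} \colon A(K_0) \to A(K_n)$. I would define $\phi_n \colon A(K_n) \to C(K_n, D)$ by $\phi_n(b)(y) = \phi_0\bigl(\alpha_{n, K_n}^{-1}(b)\bigr)\bigl(h^{-n}(y)\bigr)$ for $y \in K_n$, and check that this is a $C(K_n)$-algebra $*$-isomorphism. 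The only nontrivial verification is $C(K_n)$-linearity, which reduces to the identity $\alpha(fa) = (f \circ h^{-1})\alpha(a)$ characterizing that $\alpha$ lies over $h$: for $g \in C(K_n)$ it gives $\alpha_{n,K_n}^{-1}(gb) = (g \circ h^n)\alpha_{n,K_n}^{-1}(b)$, whence $\phi_n(gb) = g \cdot \phi_n(b)$ is immediate.

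Next I would use minimality of $h$ to cover $X$. Since every $h$-orbit is dense, for any $x \in X$ there is $n \in \mathbb{Z}$ with $h^{-n}(x) \in U_0$, equivalently $x \in h^n(U_0)$; hence $X = \bigcup_{n \in \mathbb{Z}} h^n(U_0)$. Compactness of $X$ yields indices $n_1, \ldots, n_N$ with $X = \bigcup_{k=1}^N h^{n_k}(U_0)$, so $\{(K_{n_k}, \phi_{n_k})\}_{k=1}^N$ is an atlas witnessing local triviality of $A$. Each fiber $A(y)$ with $y \in K_{n_k}$ is $*$-isomorphic to $D$ via evaluation of $\phi_{n_k}$ at $y$, and since the $K_{n_k}$ cover $X$, every fiber of $A$ is $*$-isomorphic to $D$.

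There is no real obstacle here; the only slightly subtle point is the $C(X)$-linearity of the transported trivialization, and it falls out immediately from the defining property of an action lying over $h$. This matches the author's remark that the lemma is trivial given what precedes it.
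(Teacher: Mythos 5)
Your proof is correct and is exactly the argument the paper has in mind when it calls the lemma trivial: transport the trivialization at $x_0$ along the orbit using the isomorphisms $\alpha_{n,Y}$ constructed just before Lemma~\ref{bundle_Kishimoto}, then use minimality (density of the full orbit) plus compactness to extract a finite atlas. The $C(K_n)$-linearity check via $\alpha_{n,K_n}^{-1}(gb) = (g \circ h^n)\,\alpha_{n,K_n}^{-1}(b)$ is the right (and only) point needing verification, and you handle it correctly.
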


When $A$ is locally trivial and all fibers of $A$ are $*$-isomorphic to
some C*-algebra $D$, we are able to realize
$\alpha_{n,x} \colon A(h^{-n}x) \to A(x)$ to $\Aut(D)$
(depending on local trivializations) and discuss continuty.
We call $D$ the typical fiber of $A$.
Given two local trivializations $(K, \phi)$ and $(F, \psi)$ of $A$,
for each $n \in \mathbb{Z}$ such that $h^{n}(K) \cap F$ is nonempty
and each $x \in h^{n}(K) \cap F$,
we write \[\alpha_{n,x}^{\phi, \psi}
= (\psi|_{x}) \circ \alpha_{n,x} \circ (\phi|_{h^{-n}x})^{-1} .\]
Then $\alpha_{n,x}^{\phi, \psi} \in \Aut(D)$.
If $\mathcal{A} = \{(K_j, \phi_j) \colon 1 \leq j \leq N\}$ is an atlas of $A$,
we write $\alpha(\mathcal{A})_{n,x}^{i,j}$ for $\alpha_{n,x}^{\phi_i, \phi_j}$.

\begin{lemma}
	Let $X$ be an infinite compact Hausdorff space,
	and let $D$ be a simple unital C*-algebra.
	Let $A$ be a locally trivial $C(X)$-algebra
	with typical fiber $D$.
	Suppose that $h \colon X \to X$ is a minimal homeomorphism
	and $\alpha \in \Aut(A)$ lies over $h$.
	Let $n \in \mathbb{Z}$, and let $x \in X$.
	Suppose that $(K , \phi)$ and $(F , \psi)$
	are local trivializations near $h^{-n}x$ and $x$, respectively.
	Then the map $x \mapsto \alpha_{n,x}^{\phi, \psi}(d)$ from $h^n(K) \cap F$
	to $D$ is continuous for all $d \in D$.
\end{lemma}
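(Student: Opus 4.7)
The plan is to produce, for each $d \in D$, a single global section in $A$ whose composition with the second trivialization recovers the function $y \mapsto \alpha_{n,y}^{\phi,\psi}(d)$; continuity then comes for free from the fact that sections of $A$ restricted to a trivializing neighborhood are continuous $D$-valued functions.

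First, I would use the trivialization $\phi \colon A(K) \to C(K, D)$ to build a ``constant'' local section. Let $\tilde d \in C(K, D)$ denote the constant function with value $d$, and set $a_K = \phi^{-1}(\tilde d) \in A(K)$. Since $A$ is unital (and in particular the quotient map $\pi_K \colon A \to A(K)$ is surjective), I can lift $a_K$ to some $a \in A$ with $\pi_K(a) = a_K$. For every $z \in K$, Lemma~\ref{Cast_bundle_lem}\eqref{Cast_bundle_lem_isom} then gives
\[
a(z) = \pi_z(a) = (\phi|_z)^{-1}\bigl(\tilde d(z)\bigr) = (\phi|_z)^{-1}(d).
\]

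Next I apply $\alpha^n$. For any $y \in F$, the definition of $\alpha_{n,y}$ yields
\[
\alpha^n(a)(y) = \alpha_{n,y}\bigl(a(h^{-n}y)\bigr).
\]
Whenever $y \in h^n(K) \cap F$, we have $h^{-n}y \in K$, so by the previous paragraph $a(h^{-n}y) = (\phi|_{h^{-n}y})^{-1}(d)$. Composing with $\psi|_y$ and using the definition of $\alpha_{n,y}^{\phi,\psi}$ gives
\[
(\psi|_y)\bigl(\alpha^n(a)(y)\bigr)
= (\psi|_y) \circ \alpha_{n,y} \circ (\phi|_{h^{-n}y})^{-1}(d)
= \alpha_{n,y}^{\phi,\psi}(d).
\]

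Finally, observe that $\pi_F(\alpha^n(a)) \in A(F)$, so $\psi\bigl(\pi_F(\alpha^n(a))\bigr) \in C(F, D)$ is a genuine continuous $D$-valued function on $F$. Its value at $y \in F$ is exactly $(\psi|_y)(\alpha^n(a)(y))$ by Lemma~\ref{Cast_bundle_lem}\eqref{Cast_bundle_lem_isom}. Restricting this continuous function to the subset $h^n(K) \cap F$ gives continuity of $y \mapsto \alpha_{n,y}^{\phi,\psi}(d)$, which is the desired conclusion. There is no real obstacle here; the one point that requires a little care is confirming that the constant function construction behaves well under lifts, but this is immediate from part~\eqref{Cast_bundle_lem_isom} of Lemma~\ref{Cast_bundle_lem} and the fact that $\phi$ is $C(K)$-linear.
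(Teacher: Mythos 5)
Your proof is correct and takes essentially the same route as the paper's: both realize $y \mapsto \alpha_{n,y}^{\phi,\psi}(d)$ as the evaluation at $y$ of a single continuous $D$-valued section obtained by transporting the ``constant section'' $d$ through $\alpha^n$. The only cosmetic difference is that you lift to a global section $a \in A$ and push down to $A(F)$, whereas the paper works directly on the quotients via the induced map $\alpha_{n,L} \colon A(h^{-n}L) \to A(L)$ with $L = h^n(K) \cap F$, obtaining $\alpha_{n,y}^{\phi,\psi}(d) = \alpha_{n,L}^{\phi,\psi}(1 \otimes d)(y)$.
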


\begin{proof}
	Set $L = h^n(K) \cap F$ and consider the following diagram:
	\begin{center}
	\begin{tikzcd}[row sep=normal, column sep=normal]
		A(h^{-n}x) \arrow[rrr, "\alpha_{n,x}"]
			\arrow[ddd, "\phi|_{h^{-n}x}"'] & &
		& A(x) \arrow[ddd, "\psi|_x"] \\
		& A(h^{-n}L) \arrow[r, "\alpha_{n,L}"]
			\arrow[d, "\phi|_{h^{-n}L}"'] \arrow[lu, "\pi_{h^{-n}x}"']
		& A(L) \arrow[ru, "\pi_x"] \arrow[d, "\psi|_L"] & \\
		& {C(h^{-n}L,D)} \arrow[r, "\alpha_{n,L}^{\phi, \psi}"]
			\arrow[ld, "\mathrm{ev}_{h^{-n}x}"']
		& {C(L, D)} \arrow[rd, "\mathrm{ev}_x"] & \\
		D \arrow[rrr, "\alpha_{n,x}^{\phi, \psi}"] &  &  & D \,,                         
	\end{tikzcd}
	\end{center}
	where $\alpha_{n, L}^{\phi, \psi}
	= (\psi|_L) \circ \alpha_{n,L} \circ (\phi|_{h^{-n}L})^{-1}$.
	Since every square is commutative,
	it follows that $\alpha_{n, x}^{\phi, \psi}(d)
	= \alpha_{n,L}^{\phi, \psi} (1 \otimes d) (x)$ for every $d \in D$.
	Hence the map $x \mapsto \alpha_{n,x}^{\phi, \psi}(d)$ is continuous.
\end{proof}

In \cite[Def.~1.8]{ABP2020structure}, Archey, Buck
and the second author introduced the notion of pseudoperiodicity.
Let $D$ be a C*-algebra.
We say that $S \subset \Aut(D)$ is \emph{pseudoperiodic},
if for any $a \in D_+ \setminus \{0\}$,
there exists $b \in D_+ \setminus \{0\}$ such that $b \precsim \gamma(a)$
for all $\gamma \in S \cup \{\mathrm{id}_D\}$.
Now assume that $D$ is a simple unital C*-algebra.
Then every finite subset $S \subset \Aut(D)$ is pseudoperiodic
by \cite[Lem.~2.6]{Phillips2014large};
for the same reason, $S = \{\gamma^n \colon n \in \mathbb{Z}\}$
is pseudoperiodic for every periodic $\gamma \in \Aut(D)$.
It has been proved in \cite[Lem.~1.9, 1.10]{ABP2020structure}
that the set of approximately inner automorphisms of $D$
and compact subsets of $\Aut(D)$ are pseudoperiodic.
When $D$ has strict comparison of positive elements,
or when $D$ has the property (SP)
and the order on projections over $D$ is determined by quasitraces,
the whole group $\Aut(D)$ is pseudoperiodic;
see \cite[Lem.~1.12, 1.14]{ABP2020structure}.

The following lemma follows directly from the definition
and Lemma 2.6 of \cite{Phillips2014large}.
\begin{lemma}\label{lem_pp}
	Let $D$ be a simple unital C*-algebra.
	Let $S_1$ and $S_2$ be pseudoperiodic subsets of $\Aut(D)$.
	Then $S_1 S_2 = \{ \gamma_1 \gamma_2 \colon
	\gamma_1 \in S_1 , \, \gamma_2 \in S_2
	\}$ and $S_1 \cup S_2$
	are pseudoperiodic.
\end{lemma}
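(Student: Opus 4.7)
My plan is to reduce both assertions to two standard ingredients: the invariance of Cuntz subequivalence under $*$-automorphisms (so that $c \precsim_D c'$ and $\gamma \in \Aut(D)$ imply $\gamma(c) \precsim_D \gamma(c')$), and a ``meet'' consequence of \cite[Lem.~2.6]{Phillips2014large}, which in the simple C*-algebra $D$ furnishes, from any two nonzero positive elements $b_1$ and $b_2$, a nonzero positive element $c$ with $c \precsim_D b_1$ and $c \precsim_D b_2$.

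For $S_1 S_2$, given any $a \in D_+ \setminus \{0\}$, I would first apply pseudoperiodicity of $S_2$ to $a$ and obtain $b_2 \in D_+ \setminus \{0\}$ satisfying $b_2 \precsim_D \gamma_2(a)$ for every $\gamma_2 \in S_2 \cup \{\mathrm{id}_D\}$. Next, applying pseudoperiodicity of $S_1$ to $b_2$ yields $b \in D_+ \setminus \{0\}$ with $b \precsim_D \gamma_1(b_2)$ for every $\gamma_1 \in S_1 \cup \{\mathrm{id}_D\}$. Applying the automorphism $\gamma_1$ to the relation $b_2 \precsim_D \gamma_2(a)$ produces $\gamma_1(b_2) \precsim_D \gamma_1 \gamma_2(a)$, and transitivity then gives $b \precsim_D \gamma_1 \gamma_2(a)$ for all $\gamma_1 \in S_1$ and $\gamma_2 \in S_2$. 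The identity case is covered by taking $\gamma_1 = \gamma_2 = \mathrm{id}_D$, so this $b$ certifies that $S_1 S_2$ is pseudoperiodic.

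For $S_1 \cup S_2$, I would apply pseudoperiodicity of $S_1$ and of $S_2$ to the given $a$ separately, producing nonzero positive $b_1, b_2$ with $b_i \precsim_D \gamma(a)$ for every $\gamma \in S_i \cup \{\mathrm{id}_D\}$. A single witness for $S_1 \cup S_2$ is then obtained from \cite[Lem.~2.6]{Phillips2014large}, which delivers a nonzero positive $b$ satisfying $b \precsim_D b_1$ and $b \precsim_D b_2$ simultaneously; by transitivity $b \precsim_D \gamma(a)$ for every $\gamma \in (S_1 \cup S_2) \cup \{\mathrm{id}_D\}$.

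The argument is essentially formal bookkeeping, so there is no substantial technical obstacle. The only point requiring care is the order in the product case: pseudoperiodicity of $S_2$ must be applied first and then $S_1$, so that the outer automorphism $\gamma_1$ can be absorbed into the estimate via automorphism-invariance of Cuntz subequivalence; reversing the order would leave a factor of the form $\gamma_2(\gamma_1(a))$ rather than the desired $\gamma_1\gamma_2(a)$.
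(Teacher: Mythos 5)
Your proof is correct and matches the paper's (omitted) argument: the paper asserts the lemma "follows directly from the definition and Lemma 2.6 of \cite{Phillips2014large}", and your two reductions --- automorphism-invariance plus transitivity of $\precsim_D$ for the product case, and the common Cuntz-minorant from \cite[Lem.~2.6]{Phillips2014large} for the union case --- are exactly those ingredients. Your closing remark about the order of application in the product case is a sensible precaution, though by symmetry either order would suffice after relabeling.
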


\begin{definition}\label{def_ppg}
	Let $X$ be an infinite compact Hausdorff space,
	and let $D$ be a simple unital C*-algebra.
	Let $A$ be a locally trivial $C(X)$-algebra with typical fiber $D$.
	Suppose that $h \colon X \to X$ is a minimal homeomorphism
	and $\alpha \in \Aut(A)$ lies over $h$.
	We say that $\alpha$ is \emph{pseudoperiodic generated} if
	there exists an atlas $\mathcal{A} = \{
		( K_j , \phi_j ) \colon j = 1, \ldots, N \}$
	of $A$ such that the set \[
	\big\{ \alpha(\mathcal{A})_{n,x}^{i,j} \colon
		n \in \mathbb{Z}, \, 
		i, j \in \{1, \ldots, N\}, \,
		x \in h^n(K_i) \cap K_j
	\big\}
	\]
	is pseudoperiodic.
\end{definition}

Let $(K, \phi)$ and $(F, \psi)$ be two local trivializations of $A$.
Define \[\tau_{\phi, \psi}
= (\psi |_{K \cap F}) \circ (\phi |_{K \cap F})^{-1}.\]
Then $\tau_{\phi, \psi}$ gives a $C(K \cap F)$-linear $*$-isomorphism
from $C(K \cap F, D)$ onto itself.
Let $x \in K \cap F$, let $f \in C(K \cap F, D)$,
and define \[t_{\phi, \psi}(x) \colon D \to D \quad
\text{by} \quad f(x) \mapsto \tau_{\phi, \psi}(f)(x).\]
It is easy to check that $t_{\phi, \psi}(x)$ is a well-defined
$*$-automorphism on $D$.
So each $\tau_{\phi, \psi}$ gives a map $t_{\phi, \psi}$ from $K \cap F$ to $\Aut(D)$.
We call $t_{\phi, \psi}$ the \emph{transition map} from $\phi$ to $\psi$.

\begin{lemma}\label{lem_trans_pp}
	Let $X$ be an infinite compact Hausdorff space,
	and let $D$ be a simple unital C*-algebra.
	Let $A$ be a locally trivial $C(X)$-algebra with typical fiber $D$.
	Suppose that $h \colon X \to X$ is a minimal homeomorphism
	and $\alpha \in \Aut(A)$ lies over $h$.
	Suppose that $(K, \phi)$ and $(F, \psi)$ are local trivializations.
	Then the transition map $t_{\phi, \psi} \colon K \cap F \to \Aut(D)$
	from $\phi$ to $\psi$
	induced by $\tau_{\phi, \psi} = (\psi |_{K \cap F}) \circ (\phi |_{K \cap F})^{-1}$
	is continuous in the topology of pointwise convergence in the norm on $D$,
	and	the set $t_{\phi,\psi}(K \cap F)$ is pseudoperiodic.
\end{lemma}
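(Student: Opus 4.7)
The plan is to split the lemma into its two assertions and handle them in turn: continuity of $t_{\phi,\psi}$ (in the pointwise‑norm topology on $\Aut(D)$), and pseudoperiodicity of its image. The first will be a direct consequence of evaluating $\tau_{\phi,\psi}$ on constant sections; the second will follow from compactness of $K \cap F$ together with the fact, recalled just before the lemma, that compact subsets of $\Aut(D)$ are pseudoperiodic.

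For the continuity assertion I would fix $d \in D$ and consider the constant section $1 \otimes d \in C(K \cap F, D)$ with value $d$ at every point. Because $\tau_{\phi,\psi}$ is a $*$-isomorphism of $C(K \cap F, D)$, the element $\tau_{\phi,\psi}(1 \otimes d)$ is again a continuous $D$-valued function on $K \cap F$. By the very definition of $t_{\phi,\psi}$ applied to $f = 1 \otimes d$ (which satisfies $f(x) = d$ at every $x \in K \cap F$), one has $t_{\phi,\psi}(x)(d) = \tau_{\phi,\psi}(1 \otimes d)(x)$. Thus $x \mapsto t_{\phi,\psi}(x)(d)$ is continuous in norm for every $d \in D$, which is precisely continuity of $t_{\phi,\psi}$ in the pointwise-norm topology on $\Aut(D)$.

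For the pseudoperiodicity assertion I would observe that $K$ and $F$, being closed subsets of the compact Hausdorff space $X$, are themselves compact, and so is $K \cap F$. Combining this with the continuity just proved shows that $t_{\phi,\psi}(K \cap F)$ is a compact subset of $\Aut(D)$ in the pointwise-norm topology. Invoking \cite[Lem.~1.10]{ABP2020structure}, which guarantees that compact subsets of $\Aut(D)$ are pseudoperiodic, then finishes the proof.

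I do not anticipate a genuine obstacle: the argument reduces to the facts that (a)~a $C(K \cap F)$-linear $*$-isomorphism of trivial bundles sends constant sections to continuous sections, and (b)~compactness is preserved under continuous maps. The only mild subtlety is the identification $t_{\phi,\psi}(x)(d) = \tau_{\phi,\psi}(1 \otimes d)(x)$, but this is immediate from the defining recipe $f(x) \mapsto \tau_{\phi,\psi}(f)(x)$ applied to $f = 1 \otimes d$, and well-definedness of $t_{\phi,\psi}(x)$ as a $*$-automorphism of $D$ is already asserted in the paragraph preceding the lemma (and rests on $C(K \cap F)$-linearity of $\tau_{\phi,\psi}$, which forces the output at $x$ to depend only on $f(x)$).
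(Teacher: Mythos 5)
Your proposal is correct and follows essentially the same route as the paper's proof: both establish continuity via the identity $t_{\phi,\psi}(x)(d) = \tau_{\phi,\psi}(1 \otimes d)(x)$ on constant sections, and then deduce pseudoperiodicity from compactness of $K \cap F$ together with Lemma~1.10 of \cite{ABP2020structure}.
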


\begin{proof}
	Set $L = K \cap F$.
	Let $x \in L$ and consider the following commutative diagram:
	\begin{center}
	\begin{tikzcd}[row sep=scriptsize, column sep=scriptsize]
	& {C(L, D)} \arrow[rr, "\mathrm{ev}_x"] \arrow[dd, "{\tau_{\phi, \psi}}"]
	& & D \arrow[dd, "{t_{\phi, \psi}(x)}"] \\
	A(L) \arrow[ru, "\phi|_{L}"] \arrow[rd, "\psi|_{L}"']
	& & & \\
	& {C(L, D)} \arrow[rr, "\mathrm{ev}_x"] & & D                           
	\end{tikzcd}
	\end{center}
	Since $t_{\phi, \psi}(x)$ is well-defined,
	we have $t_{\phi, \psi}(x)(d) = \tau_{\phi, \psi}(1 \otimes d)(x)$
	for every $d \in D$.
	Hence $t_{\phi, \psi} \colon L \mapsto \Aut(D)$ is continuous.
	It follows that $t_{\phi, \psi}(L)$ is a compact subset of $\Aut(D)$
	and therefore is pseudoperiodic by Lemma 1.10 of \cite{ABP2020structure}.
\end{proof}

\begin{proposition}
	Adopt Definition~\ref{def_ppg}.
	Suppose that $\alpha \colon A \to A$ is pseudoperiodic generated.
	Then for any atlas
	$\mathcal{B} = \{(F_l, \psi_l) \colon l = 1, \ldots, M\}$ of $A$,
	the set \[
		S_1 = \big\{\alpha(\mathcal{B})_{n,x}^{k,l} \colon
			n \in \mathbb{Z}, \, k, l \in \{1, \ldots, M\}, \,
			x \in h^{n}(F_k) \cap F_l
		\big\}
	\]
	is pseudoperiodic.
\end{proposition}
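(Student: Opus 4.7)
The plan is to express each element of $S_1$ as a composition of a transition map from Lemma~\ref{lem_trans_pp}, an element of the pseudoperiodic set $S_0$ associated to $\mathcal{A}$ by hypothesis, and another transition map; then to invoke Lemma~\ref{lem_pp} and Lemma~\ref{lem_trans_pp} to conclude.

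First, fix $k, l \in \{1, \ldots, M\}$, $n \in \mathbb{Z}$, and $x \in h^n(F_k) \cap F_l$. Since $\bigcup_{i=1}^{N} K_i = X$, there exist $i, j \in \{1, \ldots, N\}$ such that $h^{-n}x \in K_i$ and $x \in K_j$. Substituting the formulas
\[
\alpha(\mathcal{A})_{n,x}^{i,j} = (\phi_j|_x) \circ \alpha_{n,x} \circ (\phi_i|_{h^{-n}x})^{-1},
\]
\[
t_{\phi_j, \psi_l}(x) = (\psi_l|_x) \circ (\phi_j|_x)^{-1}, \qquad t_{\psi_k, \phi_i}(h^{-n}x) = (\phi_i|_{h^{-n}x}) \circ (\psi_k|_{h^{-n}x})^{-1},
\]
a direct check (the two pairs of inverse maps in the middle cancel) gives the key identity
\[
\alpha(\mathcal{B})_{n,x}^{k,l} = t_{\phi_j, \psi_l}(x) \circ \alpha(\mathcal{A})_{n,x}^{i,j} \circ t_{\psi_k, \phi_i}(h^{-n}x).
\]

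Next, I would use the finite decomposition $h^n(F_k) \cap F_l = \bigcup_{i,j} \bigl( h^n(F_k \cap K_i) \cap F_l \cap K_j \bigr)$ to obtain
\[
S_1 \subset \bigcup_{i, j, k, l} t_{\phi_j, \psi_l}(K_j \cap F_l) \cdot S_0 \cdot t_{\psi_k, \phi_i}(F_k \cap K_i),
\]
where the product of subsets of $\Aut(D)$ is taken pointwise. By Lemma~\ref{lem_trans_pp}, each set $t_{\phi_j, \psi_l}(K_j \cap F_l)$ and $t_{\psi_k, \phi_i}(F_k \cap K_i)$ is pseudoperiodic, and by hypothesis $S_0$ is pseudoperiodic. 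Applying Lemma~\ref{lem_pp} twice to form the triple products, and once more to take the finite union over the (finitely many) quadruples $(i,j,k,l)$, the right-hand side is pseudoperiodic. Finally, any subset of a pseudoperiodic set is pseudoperiodic: if $S' \subset S$ is pseudoperiodic, then $S' \cup \{\mathrm{id}_D\} \subset S \cup \{\mathrm{id}_D\}$, so the witness $b$ for $S$ works for $S'$. Hence $S_1$ is pseudoperiodic.

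The main obstacle is purely bookkeeping: one has to choose $(i,j)$ pointwise in $x$ and $n$ so that both $h^{-n}x \in K_i$ and $x \in K_j$, then verify the composition identity without inverse errors, and finally organize the resulting estimate as a finite union indexed by $(i,j,k,l)$ so that Lemma~\ref{lem_pp} applies. No deeper idea is needed — the underlying point is simply that pseudoperiodicity transfers between atlases through the transition maps, whose images are compact and hence pseudoperiodic.
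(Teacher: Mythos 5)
Your proof is correct and follows essentially the same route as the paper's: the same factorization $\alpha(\mathcal{B})_{n,x}^{k,l} = t_{\phi_j,\psi_l}(x) \circ \alpha(\mathcal{A})_{n,x}^{i,j} \circ t_{\psi_k,\phi_i}(h^{-n}x)$, the same finite decomposition over quadruples $(i,j,k,l)$, and the same appeal to Lemma~\ref{lem_trans_pp} and Lemma~\ref{lem_pp}. Your handling of the right-hand factor as a transition map in the reverse direction (rather than as the inverse of $t_{\phi_i,\psi_k}$, as the paper writes it) is a harmless cosmetic variant, and your explicit remark that subsets of pseudoperiodic sets are pseudoperiodic is a point the paper uses tacitly.
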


\begin{proof}
	Let $\mathcal{A} = \{(K_j, \phi_j) \colon j = 1, \ldots, N\}$
	be an atlas of $A$ such that the set \[
	S_0 = \big\{ \alpha(\mathcal{A})_{n,x}^{i,j} \colon
	n \in \mathbb{Z}, \, 
	i, j \in \{1, \ldots, N\}, \,
	x \in h^n(K_i) \cap K_j	\big\}
	\]
	is pseudoperiodic.
	For $i, j \in \{1, \ldots, N\}$ and $k,l \in \{1,\ldots,M\}$,
	let \[
		S_1(k,l,i,j) = \big\{\alpha(\mathcal{B})_{n,x}^{k,l} \colon
		n \in \mathbb{Z}, \,
		x \in h^{n}(F_k \cap K_i) \cap F_l \cap K_j
		\big\} .
	\]
	Then $S_1 = \bigcup_{k,l,i,j} S_1(k,l,i,j)$. By Lemma \ref{lem_pp},
	it suffices to proof that $S_1(k,l,i,j)$ is pseudoperiodic
	for each $(k,l,i,j)$.
	Set $L(n; k,l,i,j) = h^{n}(F_k \cap K_i) \cap F_l \cap K_j$.
	Denote by $t_{\mathcal{A}, \mathcal{B}}^{j,l} \colon
	K_{j} \cap F_{l} \to \Aut(D)$
	the transition map from $\phi_{j}$ to $\psi_{l}$.
	For $x \in L(n; k,l,i,j)$, we claim that \[
		\alpha(\mathcal{B})_{n,x}^{k,l}
		= \big( t_{\mathcal{A}, \mathcal{B}}^{j, l}(x) \big)
		\circ \alpha(\mathcal{A})_{n, x}^{i, j}
		\circ \big( t_{\mathcal{A}, \mathcal{B}}^{i, k}(h^{-n}x) \big)^{-1}.
	\]
	In fact, abbreviating $L(n; k,l,i,j)$ to $L$,
	we have the following commutative diagram: \begin{center}
	\begin{tikzcd}[row sep=normal, column sep=scriptsize]
		D \arrow[dddd, "{t_{\mathcal{A},\mathcal{B}}^{i,k}(h^{-n}x)}"']
		\arrow[rrrrr, "{\alpha(\mathcal{A})_{n,x}^{i,j}}"] & & & & &
		D
		\arrow[dddd, "{t_{\mathcal{A},\mathcal{B}}^{j,l}(x)}"'] \\
		& {C(h^{-n}L,D)} \arrow[lu, "\mathrm{ev}_{h^{-n}x}"']
		\arrow[rrr, "{\alpha(\mathcal{A})_{n,L}^{i,j}}"]
		\arrow[dd, "{\tau_{\mathcal{A},\mathcal{B}}^{i,k}|_{h^{-n}L}}"'] & & &
		{C(L,D)}
		\arrow[ru, "\mathrm{ev}_{x}"]
		\arrow[dd, "{\tau_{\mathcal{A},\mathcal{B}}^{j,l}|_L}"'] & \\
		& & A(h^{-n}L) \arrow[r, "{\alpha_{n,L}}"]
		\arrow[lu, "\phi_i|_{h^{-n}L}"']
		\arrow[ld, "\psi_k|_{h^{-n}L}"] & A(L)
		\arrow[ru, "\phi_j |_L"]
		\arrow[rd, "\psi_l|_{L}"'] & &  \\
		& {C(h^{-n}L,D)}
		\arrow[ld, "\mathrm{ev}_{h^{-n}x}"]
		\arrow[rrr, "{\alpha(\mathcal{B})_{n,L}^{k,l}}"'] & & &
		{C(L,D)} \arrow[rd, "\mathrm{ev}_{x}"] & \\
		D \arrow[rrrrr, "{\alpha(\mathcal{B})_{n,x}^{k,l}}"] & & & & & D,
	\end{tikzcd}
	\end{center}
	where \begin{gather*}
	\tau_{\mathcal{A},\mathcal{B}}^{j,l}
	= (\psi_l |_{K_j \cap F_l}) \circ (\phi_j |_{K_j \cap F_l})^{-1} , \\
	\alpha(\mathcal{A})_{n,L}^{i,j}
	= (\phi_j|_{L}) \circ \alpha_{n,L} \circ (\phi_i|_{h^{-n}L})^{-1}, \\
	\text{and} \enspace
	\alpha(\mathcal{B})_{n,L}^{k,l}
	= (\psi_l|_{L}) \circ \alpha_{n,L} \circ (\psi_k|_{h^{-n}L})^{-1}.
	\end{gather*}
	Write \[
	S_0(i,j,k,l) = \big\{ \alpha(\mathcal{A})_{n,x}^{i,j} \colon
		n \in \mathbb{Z}, \, x \in h^n(F_k \cap K_i) \cap F_l \cap K_j
	\big\}.
	\]
	Then $S_0(i,j,k,l)$ is pseudoperiodic and \[
		S_1(k,l,i,j) \subset
			t_{\mathcal{A},\mathcal{B}}^{i,k}(L)
			S_0(i,j,k,l)
			t_{\mathcal{A},\mathcal{B}}^{j,l}(L)
	\]
	by our claim.
	Thus $S_1(k,l,i,j)$ is pseudoperiodic by Lemma~\ref{lem_trans_pp} and \ref{lem_pp}.
\end{proof}

Let $K, F \subset X$ be closed subsets such that
$\mathrm{int}(K) \cup \mathrm{int}(F) = X$.
Generally we cannot conclude that $a \precsim_A b$ from
$\pi_K(a) \precsim_{A(K)} \pi_K(b)$ and
$\pi_F (a) \precsim_{A(F)} \pi_F (b)$ without other conditions,
but the following lemma is trivial.
\begin{lemma}\label{bundle_Cuntz}
	Let $X$ be a compact Hausdorff space,
	and let $A$ be a unital $C(X)$-algebra with $a,b \in A_+$.
	If there exists a compact subset $K \subset X$ such that
	$\pi_K(a) \precsim_{A(K)} \pi_K(b)$	and $a(x) = b(x) = 0$
	for all $x \in X \setminus K$, then $a \precsim_A b$.
\end{lemma}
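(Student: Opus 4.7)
The plan is to exploit the hypothesis that $a$ and $b$ are fibrewise zero off $K$ in order to lift a witness of $\pi_K(a) \precsim_{A(K)} \pi_K(b)$ to a witness of $a \precsim_A b$, with no modification required.

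First I would unpack the Cuntz subequivalence in $A(K)$: there exists a sequence $(w_n)_{n=1}^{\infty}$ in $A(K)$ such that
\[
\|w_n \pi_K(b) w_n^* - \pi_K(a)\| \longrightarrow 0 .
\]
Since $\pi_K \colon A \to A(K)$ is surjective, I would then choose any lifts $v_n \in A$ with $\pi_K(v_n) = w_n$, and set $c_n = v_n b v_n^* - a \in A$. Note that $\pi_K(c_n) = w_n \pi_K(b) w_n^* - \pi_K(a)$, which goes to $0$ in norm by construction.

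Next I would compute the fibres of $c_n$ and bound $\|c_n\|$ in $A$. For $x \in X \setminus K$ the hypothesis $a(x) = b(x) = 0$ immediately yields $c_n(x) = v_n(x) \cdot 0 \cdot v_n(x)^* - 0 = 0$. For $x \in K$, evaluation at $x$ factors through $\pi_K$, so $c_n(x)$ coincides with the $x$-fibre of $\pi_K(c_n)$. Applying Lemma~\ref{Cast_bundle_lem}(\ref{Cast_bundle_lem_norm}) with $Y = X$ (i.e.\ using that $\pi_X$ is the identity on $A$) and combining the two fibre computations gives
\[
\|c_n\| = \max_{x \in X} \|c_n(x)\| = \max_{x \in K} \|c_n(x)\| = \|\pi_K(c_n)\| = \|w_n \pi_K(b) w_n^* - \pi_K(a)\| ,
\]
which tends to $0$. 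Hence $v_n b v_n^* \to a$ in $A$, giving $a \precsim_A b$.

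As the authors indicate, there is no real obstacle here; the only content is that both vanishing hypotheses line up so that the witness sequence in $A(K)$ can be transported back to $A$ without any adjustment, and the $C(X)$-algebra norm is then exactly the supremum of fibre norms over $K$.
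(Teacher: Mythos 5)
Your argument is correct and is essentially the paper's proof: both lift the witnesses of $\pi_K(a) \precsim_{A(K)} \pi_K(b)$ through the surjection $\pi_K$, observe that the error term vanishes fibrewise off $K$ because $a(x)=b(x)=0$ there, and conclude via the norm formula of Lemma~\ref{Cast_bundle_lem}~(\ref{Cast_bundle_lem_norm}). The only cosmetic difference is that you phrase it with a sequence $(w_n)$ while the paper fixes $\epsilon>0$ and a single approximant $v'$, which changes nothing.
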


\begin{proof}
	Let $\epsilon > 0$.
	Then there exists $v' \in A(K)$ such that \[
	\| \pi_K(a) - v' \pi_K(b) (v')^*\| < \epsilon .\]
	Let $v \in A$ be a lift of $v' \in A(K)$.
	Then $\| a(x) - v(x) b(x) v(x)^*\| < \epsilon$ for all $x \in K$
	and all $x \in X \setminus K$.
	Thus $\| a - vbv^*\| < \epsilon$ by Lemma~\ref{Cast_bundle_lem}
	\eqref{Cast_bundle_lem_norm}.
\end{proof}

\begin{lemma}\label{directed_subalg}
	Let $X$ be an infinite compact Hausdorff space, 
	and let $D$ be a simple unital C*-algebra.
	Suppose that $A$ is a locally trivial $C(X)$-algebra
	with typical fiber $D$.
	Let $h \colon X \to X$ be a minimal homeomorphism
	and let $\alpha \in \Aut(A)$ lie over $h$.
	Fix a point $y \in X$.
	Let $u$ denote the canonical unitary in $C^* (\mathbb{Z}, A, \alpha)$,
	and let $B$ be the C*-subalgebra generated by $A$ and $C_0(X \setminus y) A u$
	in $C^*(\mathbb{Z}, A, \alpha)$.
	Suppose that $\alpha$ is pseudoperiodic generated.
	Then for any $a \in A_+ \setminus \{0\}$,
	there exists $f \in C(X)_+ \setminus \{0\}$
	such that $f \precsim_{B} a$.
\end{lemma}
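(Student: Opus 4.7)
The plan is to localize $a$ at a point $x_0$ of maximal fiber-norm, use pseudoperiodic generation in the typical fiber $D$ to produce a ``universal'' nonzero positive element $e \in D$ that is Cuntz-dominated by every relevant translate of $a(x_0)$, and convert this into a scalar function on $X$ via the orbit-breaking structure of $B$ together with the simplicity of $D$. Because $A$ is continuous as a $C(X)$-algebra (being locally trivial), I would first normalize $\|a\|=1$ and pick $x_0 \in X$ with $\|a(x_0)\|=1$ together with a chart $(K_i,\phi_i)$ of an atlas $\mathcal{A}$ witnessing pseudoperiodic generation and with $x_0 \in \mathrm{int}(K_i)$; continuity lets me shrink to an open $V \ni x_0$ on which the local representative $\phi_i(\pi_{K_i}(a))$ stays within a small $\epsilon$ of the constant $d_0 := \phi_i|_{x_0}(a(x_0))$. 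Applying pseudoperiodicity of the set $S_{\mathcal{A}}$ of Definition~\ref{def_ppg} to $(d_0-\epsilon)_+$ produces $e \in D_+\setminus\{0\}$ with $e \precsim_D \gamma((d_0-\epsilon)_+)$ for every $\gamma \in S_{\mathcal{A}} \cup \{\mathrm{id}_D\}$.

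By Proposition~\ref{piece_1}, for each $k \in \mathbb{Z}_{>0}$ and every scalar $g \in C(X)_+$ vanishing on $\{y,hy,\ldots,h^{k-1}y\}$, one has $g^{1/2}u^k \in B$, so $g \cdot \alpha^k(a) = (g^{1/2}u^k) a (g^{1/2}u^k)^* \precsim_B a$. Using minimality of $h$ on the infinite space $X$, I would select $N$ powers $k_1,\ldots,k_N$ (with $N$ to be chosen sufficiently large) so that the translates $h^{k_j}(V)$ have pairwise disjoint closures, each lies inside a chart of $\mathcal{A}$, and each avoids the corresponding initial orbit segment $\{y,hy,\ldots,h^{k_j-1}y\}$. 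Pseudoperiodicity together with Lemma~\ref{cuntz_comp_lem}(\ref{cuntz_comp_lem_4}) then guarantees that every fiber value of $\alpha^{k_j}(a)$ over $h^{k_j}(V)$ Cuntz-dominates $e$.

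Finally, to promote $e$ to a scalar function, I would apply Lemma~\ref{cor_2_5} inside the hereditary subalgebra $\overline{a(x_0) D a(x_0)} \subset D$ (in the non-type-I case; when $D$ is a matrix algebra the same conclusion follows from a direct matrix-unit construction) to obtain $N$ pairwise orthogonal, Cuntz-equivalent nonzero positive elements whose sum is Cuntz-below $a(x_0)$. Transferring these back to $A$ via local triviality and distributing them over the orbit translates above produces an element of the form $\sum_j g_j \alpha^{k_j}(a) \in A_+$, which is Cuntz-below $a$ in $B$ and whose Cuntz class in turn dominates that of a suitable nonzero scalar $f \in C(X)_+$. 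The main obstacle, I expect, is arranging this final comparison so that the resulting inequality lands at a single copy of $a$ in $B$, rather than at $a^{\oplus N}$: this is handled by ensuring the $N$ orthogonal pieces all fit inside the single hereditary subalgebra $\overline{aAa}$ via Lemma~\ref{cor_2_5}, so that simplicity of $D$ absorbs the direct-sum multiplicity into a single copy of $a$.
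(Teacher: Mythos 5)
Your proposal assembles several correct ingredients --- localizing $a$ at a point of maximal fiber norm, extracting an elementary tensor $(d_0-\epsilon)_+$ below $a$ by norm continuity of the local representative (a legitimate substitute for the Kirchberg Slice Lemma step the paper uses), applying pseudoperiodicity to get a universal lower bound $e$, and the observation that $g\alpha^k(a)=(g^{1/2}u^k)a(g^{1/2}u^k)^*\precsim_B a$ whenever $g$ vanishes on $\{y,h(y),\dots,h^{k-1}(y)\}$ --- but it is missing the mechanism that actually produces a \emph{scalar} function below $a$, and the mechanism you propose for absorbing the multiplicity does not work. A nonzero $f\in C(X)_+$ acts as $f(x)1_{A(x)}$ in each fiber, so to get $f\precsim_B(\text{stuff})$ the ``stuff'' must Cuntz-dominate the unit of every fiber over $\supp(f)$. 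Your element $\sum_j g_j\alpha^{k_j}(a)$ has, at each point of $X$, the value of a \emph{single} translate of a single fiber value of $a$ (the supports are disjoint), and a single positive element of $D$ does not Cuntz-dominate $1_D$ in general. The paper supplies the missing step: simplicity of the fiber gives $1=\sum_{j=1}^k z_j' b(y_0)z_j'^*$ for a fixed finite $k$ determined by the fullness of $b(y_0)$, this identity is extended to a neighborhood $U_3$ of $y_0$, minimality gives $k$ \emph{return times} $n_1<\dots<n_k$ of $y_0$ to $U_3$, and the elements $v_j=(f_0\circ h^{-n_j})u^{n_j}w_j^*\in B$ pull the $k$ pairwise orthogonal translates $c_{n_j}=\alpha^{n_j}(fb)$ back over the same base points, where $\sum_j v_j^*c_{n_j}v_j=f$; this is what yields $f\precsim_B\sum_j c_{n_j}$. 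Nothing in your outline plays this role.

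Second, your device for collapsing $a^{\oplus N}$ to $a$ --- $N$ orthogonal Cuntz-equivalent pieces inside $\overline{a(x_0)Da(x_0)}$ via Lemma~\ref{cor_2_5} --- cannot work as stated. To route the $j$-th translate into the $j$-th orthogonal piece you would need the translated fiber values to be Cuntz \emph{below} elements of $\overline{a(x_0)Da(x_0)}$; pseudoperiodicity provides a common Cuntz \emph{lower} bound for the translates, never an upper bound, so there is no comparison in the required direction. (Moreover, when $D$ is a matrix algebra and $a(x_0)$ has rank one, $\overline{a(x_0)Da(x_0)}$ is one-dimensional and contains no two orthogonal nonzero positive elements, so your fallback fails too.) The paper collapses the multiplicity geometrically instead: because the $n_j$ are return times, all $k$ disjoint translates $c'_{n_j}$ are supported inside the single chart neighborhood where $g_0\otimes d_0\precsim\phi_1(\pi_1(a_1))$ holds, their fiber values are forced below $d_0$ by pseudoperiodicity, and orthogonality of supports gives $\sum_j c'_{n_j}\sim\bigoplus_j c'_{n_j}\precsim(g_0/\rho)\otimes d_0\sim g_0\otimes d_0\precsim a_1$ --- a single copy of $a$, with no orthogonal decomposition of a fiber needed. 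Finally, you must first move the base point off the countable orbit of $y$ (a Baire category argument, as in the paper's choice of $y_0$) before the requirement that $h^{k_j}(V)$ avoid $\{y,\dots,h^{k_j-1}(y)\}$ can be met for all $j$; as written, your $x_0$ could lie on that orbit.
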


\begin{proof}
	Let $\mathcal{A} = \{(K_j, \phi_j) \colon j = 1, \ldots, N\}$ be
	an atlas of $A$ such that the corresponding set \[
		S = \big\{ \alpha(\mathcal{A})_{m,x}^{i,j} \colon m \in \mathbb{Z}, \,
			i, j \in \{1, \ldots, N\}, \,
			x \in h^m(K_i) \cap K_j
		\big\}
	\]
	is pseudoperiodic.
	Let $\pi_j \colon A \to A(K_j)$ be the quotient map.
	Choose an open set $U \subset X$
	such that $\|a(x)\| > 0$ for all $x \in U$.
	Take $x_1 \in U \setminus \{y\}$ and suppose without loss of generality
	that $x_1 \in \mathrm{int}(K_1)$.
	Take an open neighborhood $U_1$ of $x_1$
	such that $\overline{U_1} \subset \mathrm{int}(K_1) \cap U$
	and $y \notin \overline{U_1}$.
	Choose $f_1 \in C(X)$ such that $0 \leq f_1 \leq 1$, $f_1(x_1) = 1$
	and $\supp(f_1) \subset U_1$. Set $a_1 = f_1 a$.
	Then $a_1 \precsim_{A} a$. Use Kirchberg's Slice Lemma
	(see \cite[Lem.~4.1.9]{RS2002Classification})
	to find $g_0 \in C_0(U_1)_+ \setminus \{0\}$ and
	$d_0 \in D_+ \setminus \{0\}$ such that \[
		g_0 \otimes d_0 \precsim_{C_0(U_1,D)} \phi_1 (\pi_{1} ( a_1 )) .
	\]
	Since $S$ is pseudoperiodic,
	there exists $d_1 \in D_+ \setminus \{0\}$ such that \[
	\gamma(d_1) \precsim_D d_0 \qquad \text{for all} \quad \gamma \in S.
	\]
	Without loss of generality,
	we assume that $\|d_1\| = 1$ and set $d = (d_1 - \frac{1}{2})_+$.
	Choose $y_0 \in U_1 \cap \{x \colon g_0(x) \ne 0\}
	\setminus \{h^n(y) : n \in \mathbb{Z}\}$ by the Baire Category Theorem.
	Choose an open neighborhood $U_2$ of $y_0$ such that
	$\overline{U_2} \subset U_1$ and such that
	$g_0$ has positive minimum value $\rho$ on $\overline{U_2}$.
	
	Choose $f_2 \in C(X)$ such that $0 \leq f_2 \leq 1$,
	$f_2 |_{\overline{U_2}} = 1$ and $\supp(f_2) \subset U_1$.
	Let $b \in A_+$ be a lifting of $\phi_1^{-1}(f_2 \otimes d) \in A(K_1)_+$
	such that $b(x) = 0$ for $x \in X \setminus \mathrm{int}(K_1)$.
	Then $(\phi_1|_x) (b(x)) = f_2(x)d$ in $D$ for all $x \in K_1$.
	Since $A(y_0) \cong D$ is a simple unital C*-algebra,
	there exists $k > 0$ and
	$z_1^{\prime} , z_2^{\prime} , \ldots , z_k^{\prime} \in A(y_0)$
	such that \[
		\sum_{j=1}^k z^{\prime}_j b(y_0) z^{\prime *}_j = 1
		\quad \text{in} \quad A(y_0).
	\]
	Lifting $z^{\prime}_j \in A(y_0)$ to $z_j \in A$ for each $j$,
	we can find an open neighborhood
	$U_3$ of $y_0$ such that $\overline{U_3} \subset U_2$ and \[
		\left\| 1 - \sum_{j=1}^k z_j(x) b(x) (z_j)^* (x) \right\|
		< \frac{1}{4}
		\qquad \text{for all} \quad x \in \overline{U_3} .
	\]
	Set \[
		w^{\prime}_j =
		\bigg( \sum_{i = 1}^k \pi_{\overline{U_3}}(z_j b z^*_j) \bigg)^{-1/2}
		\pi_{\overline{U_3}}(z_j)
		\in A(\overline{U_3}) .
	\]
	Then $\sum_{j=1}^k w'_j \pi_{\overline{U_3}}(b) w'_j = 1$
	in $A(\overline{U_3})$.
	Let $w_j \in A$ be a lift of $w^{\prime}_j \in A(\overline{U_3})$
	for $j = 1,2,\cdots,k$.
	Then we have \[
		\sum_{j=1}^k w_j(x) b(x) w_j^*(x) = 1 \qquad \text{in} \quad A(x)
		\quad \text{for} \quad x \in \overline{U_3}.
	\]
	Since $(X,h)$ is an infinite minimal system,
	the forward orbits of $y_0$ will return to $U_3$ infinite many times.
	So there exists $n_1 , n_2 , \ldots , n_k$ 
	with $0 = n_1 < n_2 < \cdots < n_k$ 
	such that $h^{n_j} (y_0) \in U_3$ for $j = 1, 2, \ldots, k$.
	
	To simplify the notation,
	we write $\beta_{n,x}^{i,j} = \alpha(\mathcal{A})_{n,x}^{i,j}$.
	Choose an open neighborhood $V$ of $y_0$ which is so small
	such that the following hold: \begin{enumerate}
		\item $\overline{V} \subset U_3 
			\cap \bigcap_{m=0}^{n_k}
			\left\{h^{-m}(\mathrm{int}(K_i))
				\mid h^m(y_0) \in \mathrm{int}(K_i)
			\right\}
			\setminus \bigcup_{m=0}^{n_k} \{h^{-m}(y)\}$.
		\item $h^{n_j}(\overline{V}) \subset U_3$ for $j = 1,2,\cdots,k$.
		\item The sets $\overline{V} , h(\overline{V}) ,
			\ldots , h^{n_k}(\overline{V})$ are disjoint.
		\item For any $z \in V$, any $m \in \{0,1,\cdots, n_k\}$, 
		and any $i$ such that $h^m(y_0) \in \mathrm{int}(K_i)$,
		we have \[
			\left\| \beta_{m,h^m z}^{1,i}(d_1)
			- \beta_{m,h^m y_0}^{1,i}(d_1) \right\| < \frac{1}{4}
			\qquad \text{if} \quad z \in K_1 \cap h^{-m} (K_{i}) .
		\]
	\end{enumerate}
	Choose $f, f_0 \in C(X)_+$ such that $\|f\| , \|f_0\| \leq 1$,
	W$f(y_0) = 1$, $f f_0 = f$ and $\supp(f_0) \subset V$.
	For $m = 0 , 1 , \cdots , n_k$,
	set $c_m = \alpha^m(fb) = (f \circ h^{-m}) \alpha^m(b) \in A_+$.
	Then \[
		c_m(x) = f(h^{-m}x) \alpha_{m,x}(b(h^{-m}x))
	\]
	for all $x \in X$ and \[
		c_m (x) = \begin{cases}
			f(h^{-m}x) (\phi_{i}|_x)^{-1}[\beta_{m,x}^{1,i} (d)] \quad
			& x \in h^{m}(K_1) \cap K_i \\
			0 \quad
			& x \in X \setminus h^m(V) .
		\end{cases}
	\]
	Let $m = 0, 1, \ldots, n_k$.
	Fix $i = i(m)$ such that $h^m(y_0) \in \mathrm{int}(K_i)$.
	We can assume that $i(n_j) = 1$ for all $j = 1, 2, \cdots , k$
	by condition (2).
	Set \[
		d_2 = \beta_{m, h^m y_0}^{1,i(m)} (d_1) \in D_+
		\qquad \text{and} \qquad L = h^m(\supp(f_0)) .
	\]
	We have $L \subset h^m(\mathrm{int}(K_1)) \cap \mathrm{int}(K_i)$
	by condition (1). Let \[
		c^{\prime}_m (x) = \begin{cases}
			(\phi_i |_x)^{-1} f(h^{-m}x) d_2 \quad & x \in L\\
			0 \quad & x \in X \setminus L .
		\end{cases}
	\]
	We claim that \begin{equation}\label{bundle_piece_1}
		c_m \precsim_{A} c^{\prime}_m \qquad
		\text{for all} \quad m = 0,1,\ldots, n_k .
	\end{equation}
	By Lemma~\ref{bundle_Cuntz},
	it suffices to show that $\pi_L(c_m) \precsim_{A(L)} \pi_L(c_m^\prime)$ .
	To see this, let $\epsilon > 0$.
	Define $b^{\prime} \in A(L)$ by
	$b^{\prime}(x) = (\phi_i |_x)^{-1} [\beta_{m,x}^{1,i}(d_1)]$ for $x \in L$.
	Then \begin{align*}
		\big(b'(x) - \tfrac{1}{2} \big)_+
		& = \big(\alpha_{m,x} [(\phi_1|_{h^{-m}x})^{-1}(d)] - \tfrac{1}{2} \big)_+ \\
		& = \alpha_{m,x} \Big[(\phi_1|_{h^{-m}x})^{-1}
			\big[ (d - \tfrac{1}{2})_+ \big] \Big]
		= \alpha_{m,x}[b(h^{-m}x)]
	\end{align*}
	for all $x \in L \subset h^m(V) \subset h^m(U_2)$.
	Then condition (4) implies that \begin{align*}
	\frac{1}{4} &
	\geq \big\| (\phi_i|_x)^{-1} [\beta_{m,x}^{1,i}(d_1)]
		- (\phi_i|_x)^{-1} [\beta_{m,h^m y_0}^{1,i}(d_1)] \big\| \\
	& = \big\| b'(x) - (\phi_i|_x)^{-1} (d_2) \big\| \\
	& = \big\| b'(x) - (\phi_{i}|_L)^{-1}(1 \otimes d_2)(x) \big\|
	\end{align*}
	for all $x \in L$.
	Thus $\| b' - (\phi_{i}|_L)^{-1}(1 \otimes d_2)\|_{A(L)} < \frac{1}{2}$.
	So \[
		\big(b' - \tfrac{1}{2}\big)_+
		\precsim_{A(L)} (\phi_{i}|_L)^{-1}(1 \otimes d_2)
	\]
	by Lemma~\ref{cuntz_comp_lem} \eqref{cuntz_comp_lem_4}.
	Therefore there exists $v_0 \in A(L)$ such that \[
		\Big\| \big(b' - \tfrac{1}{2}\big)_+ -
		v_0 (\phi_{i}|_L)^{-1}(1 \otimes d_2) v_0^* \Big\|
		< \frac{\epsilon}{2} . \]
	Then for any $x \in L$, we have 
	\begin{align*}
	& \left\| v_0(x) c_m^\prime(x) v_0^*(x) - c_m(x) \right\| \\
	& \hspace*{3em} \mbox{} = \left\|
		f(h^{-m}x) v_0(x) (\phi_i|_L)^{-1}(1 \otimes d_2)(x) v_0^*(x)
		- f(h^{-m}x) \alpha_{m,x}(b(h^{-m}x)) \right\| \\
	& \hspace*{3em} \mbox{} = f(h^{-m}x) \left\|
		v_0(x) (\phi_i|_L)^{-1}(1 \otimes d_2)(x) v_0^*(x)
		- \alpha_{m,x}(b(h^{-m}x)) \right\| \\
	& \hspace*{3em} \mbox{} = f(h^{-m}x) \Big\|
		v_0(x) (\phi_{i}|_L)^{-1}(1 \otimes d_2)(x) v_0^*(x)
		- \big(b'(x) - \tfrac{1}{2}\big)_+ \Big\| \\
	& \hspace*{3em} \mbox{} \leq \Big\|
		v_0 (\phi_{i}|_L)^{-1}(1 \otimes d_2) v_0^*
		- \big(b' - \tfrac{1}{2}\big)_+ \Big\|
	< \frac{\epsilon}{2} .
	\end{align*}
	Taking the supremum over $x \in L$ gives
	$\|v_0 \pi_L(c_m^\prime) v_0^* - \pi_L(c_m)\| < \epsilon$.
	Thus the claim follows.
	
	Next we claim that \[
		f \precsim_{B} \sum_{j=1}^k c_{n_j} .
	\]
	Recall that we have found $w_j \in A$ for $j = 1,2,\ldots,k$
	such that \[
		\sum_{j=1}^k w_j(x) b(x) w_j^*(x) = 1 \qquad \text{in} \quad A(x)
		\enspace \text{for} \enspace x \in \overline{U_3}.
	\]
	Define $v_j = (f_0 \circ h^{-n_j}) u^{n_j} w_j^*
	\in C^*(\mathbb{Z},A,\alpha)$.
	Then \[
	E(v_j u^{-n}) = \begin{cases}
		(f_0 \circ h^{-n_j}) \alpha^{n_j}(w_j^*) \quad & n = n_j \\
		0 \quad & n \ne n_j .
	\end{cases}
	\]
	Since $E(v_j u^{-n})$ vanishes on $\{y , h(y) , \ldots , h^n(y)\}$
	by condition (1),
	we conclude that $v_j \in B$ by Proposition~\ref{piece_1}.
	We calculate: \begin{align*}
		v_j^* c_{n_j} v_j &= w_j u^{-n_j} (f \circ h^{-n_j})
			\alpha^{n_j}(fb) (f \circ h^{-n_j}) u^{n_j} w_j^* \\
		& = w_j f_0 (fb) f_0 w_j^* = f(w_j b w_j^*) .
	\end{align*}
	Using $\supp(f) \subset U_3$ and condition \eqref{cuntz_comp_lem_2}
	at the first step, and Lemma~\ref{cuntz_comp_lem} \eqref{cuntz_comp_lem_5}
	at the second step,	and using condition \eqref{cuntz_comp_lem_3} and
	Lemma~\ref{cuntz_comp_lem} \eqref{cuntz_comp_lem_6} at the last step,
	we have \begin{equation}\label{bundle_piece_2}
		f = \sum_{j=1}^k f (w_j b w_j^*) 
		\precsim_{B} \bigoplus_{j=1}^{k} c_{n_j}
		\sim_{B} \sum_{j=1}^{k} c_{n_j} .
	\end{equation}
	This proves our second claim.
	
	Finally we claim that $\sum_{j=1}^k c_{n_j}^{\prime} \precsim_{A} a_1$.
	It suffices to prove that
	\[
		\sum_{j=1}^k \pi_1 (c^\prime_{n_j})
		\precsim_{A(K_1)} \phi_1^{-1} (g_0 \otimes d_0)
	\]
	by Lemma~\ref{bundle_Cuntz}.
	In fact, using \cite[Lem.~1.11]{Phillips2014large} at step 2,
	we have \begin{align*}
		\sum_{j=1}^k \phi_1 \left(\pi_{1} ( c^{\prime}_{n_j} )\right)
		& = \sum_{j=1}^k (f \circ h^{-n_j}) \otimes
			\beta_{n_j,h^{n_j} y_0}^{1,1}(d_1) \\
		& \precsim_{C(K_1, D)} \bigoplus_{j=1}^k
			(f \circ h^{-n_j}) \otimes d_0 \\
		& \sim_{C(K_1, D)} \sum_{j=1}^k (f \circ h^{-n_j}) \otimes d_0 \\
		& = \Big( \sum_{j=1}^k f \circ h^{-n_j} \Big) \otimes d_0 \\
		& \precsim_{C(K_1, D)} \frac{g_0}{\rho} \otimes d_0
		\sim_{C(K_1, D)} g_0 \otimes d_0 .
	\end{align*}
	Thus we get \begin{equation}\label{bundle_piece_3}
	\sum_{j=1}^k c'_{n_j} \precsim_{A} a_1 .\end{equation}

	Now combine \eqref{bundle_piece_1}, \eqref{bundle_piece_2}
	and \eqref{bundle_piece_3} to get $f \precsim_{B} a_1 \precsim_{B} a$.
\end{proof}

\begin{lemma}\label{lem_finiteness}
	Let $X$ be an infinite compact Hausdorff space, 
	and let $D$ is a simple unital C*-algebra 
	with nonempty tracial state space.
	Let $A$ be a locally trivial $C(X)$-algebra with typical fiber $D$,
	Suppose that $h \colon X \to X$ is a minimal homeomorphism
	and $\alpha \in \Aut(A)$ lies over $h$.
	Then $C^*(\mathbb{Z}, A, \alpha)$ has a tracial state and is stably finite.
\end{lemma}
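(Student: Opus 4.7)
The plan is to construct an $\alpha$-invariant tracial state $\tau$ on $A$ and compose it with the standard conditional expectation $E \colon C^*(\mathbb{Z},A,\alpha) \to A$ to produce a tracial state on $C^*(\mathbb{Z},A,\alpha)$. It is standard that $\alpha$-invariance of $\tau$ forces $\tau \circ E$ to be tracial on $C_c(\mathbb{Z}, A, \alpha)$: for $f, g \in A$ and $k \in \mathbb{Z}$, the $u^0$-coefficients of $(fu^k)(gu^{-k})$ and $(gu^{-k})(fu^k)$ are $f\alpha^k(g)$ and $g\alpha^{-k}(f)$, respectively, and these have equal $\tau$-value since $\tau(f\alpha^k(g)) = \tau(\alpha^k(g)f) = \tau(g\alpha^{-k}(f))$ by the trace property and $\alpha$-invariance. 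Stable finiteness then drops out of simplicity: by Proposition~\ref{bundle_simple}, $C^*(\mathbb{Z},A,\alpha)$ is simple, so any tracial state on it is automatically faithful (the trace-kernel ideal $\{x \colon (\tau \circ E)(x^*x) = 0\}$ is a proper closed two-sided ideal, hence zero), and a unital C*-algebra with a faithful tracial state is stably finite since the extended faithful trace $\mathrm{tr}_n \otimes (\tau \circ E)$ on $M_n(C^*(\mathbb{Z},A,\alpha))$ rules out nontrivial isometries.

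To produce an $\alpha$-invariant tracial state on $A$, I first show $T(A)$ is nonempty. Fix any $x_0 \in X$ and, using local triviality of $A$, choose a trivialization $(K, \phi)$ with $x_0 \in \mathrm{int}(K)$. Pick $\sigma \in T(D)$ (nonempty by hypothesis) and define
\[
\tau_0(a) = \sigma\bigl( (\phi|_{x_0})(a(x_0)) \bigr) \qquad \text{for } a \in A.
\]
Then $\tau_0$ is a tracial state on $A$, being the composition of the quotient map $\pi_{x_0} \colon A \to A(x_0)$, the $*$-isomorphism $\phi|_{x_0} \colon A(x_0) \to D$, and the trace $\sigma$. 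Since $A$ is unital, $T(A)$ is weak-$*$ compact and convex, and the assignment $\tau \mapsto \tau \circ \alpha$ is a continuous affine self-map of $T(A)$. By the Markov-Kakutani fixed point theorem (or equivalently, by extracting a weak-$*$ limit point of the Cesàro averages $\tfrac{1}{n}\sum_{k=0}^{n-1} \tau_0 \circ \alpha^k$), this map has a fixed point $\tau$, which is the desired $\alpha$-invariant tracial state on $A$.

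No step here poses a serious obstacle; the argument is essentially a routine averaging combined with the observation that locally trivial $C(X)$-algebras inherit the existence of tracial states pointwise from their typical fiber, with simplicity of the crossed product (established earlier) promoting mere existence of a trace to stable finiteness.
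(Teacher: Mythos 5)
Your proof is correct and follows essentially the same route as the paper: pull back a trace on the typical fiber $D$ through a local trivialization and the fiber quotient map to get a tracial state on $A$, average via Markov--Kakutani to make it $\alpha$-invariant, compose with the conditional expectation $E$, and use simplicity of the crossed product (Proposition~\ref{bundle_simple}) to upgrade to stable finiteness. Your write-up is in fact somewhat more explicit than the paper's about why $\tau\circ E$ is tracial and why a faithful trace yields stable finiteness, but there is no substantive difference in approach.
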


\begin{proof}
	Since we already know that $A$ is simple
	by Proposition~\ref{bundle_simple},
	it remains to show that $C^*(\mathbb{Z}, A, \alpha)$ has a tricial state.
	Take $x \in X$ and let $\phi$ be a local trivialization near $x$.
	Let $\tau$ be a tracial state on $D$.
	Then $\tau \circ (\phi|_x) \circ \pi_x$ gives a tracial state on $A$.
	Since $A$ is unital, the tracial state space $\mathrm{T}(A)$
	is a nonempty compact convex subset,
	and $\tau \mapsto \tau \circ \alpha$ gives a
	continuous affine transformation on it.
	Applying the Markov-Kakutani fixed point theorem,
	we get an $\alpha$-invariant tracial state $\tau_{\alpha}$ on $A$.
	Let $E \colon C^* (\mathbb{Z}, A, \alpha) \to A$
	be the standard condition expectation.
	Then $\tau_{\alpha} \circ E$ gives a tracial state
	on $C^*(\mathbb{Z}, A, \alpha)$;
	see for example \cite[Example~10.1.30]{GKPT2018Crossed}.
\end{proof}

We then give the following theorem as promised.

\begin{theorem}\label{thm_for_application}
	Let $X$ be an infinite compact Hausdorff space, 
	and let $D$ be a simple unital C*-algebra
	with nonempty tracial state space.
	Let $A$ be a locally trivial $C(X)$-algebra with typical fiber $D$.
	Suppose that $h \colon X \to X$ is a minimal homeomorphism
	and $\alpha \in \Aut(A)$ lies over $h$.
	Let $y \in X$ be arbitary.
	If $\alpha$ is pseudoperiodic generated,
	then $C^*(\mathbb{Z}, A, \alpha)$ has a large subalgebra of crossed type
	associated to the closed ideal $C_0(X \setminus y)A$.
\end{theorem}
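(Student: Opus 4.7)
My plan is to apply Theorem~\ref{main_thm} directly with $C = \eta(C(X)) \subset A$ (identified with $C(X)$) and $J = C_0(X \setminus y) A$, so that $J|_C = C_0(X \setminus y)$ and the resulting large subalgebra is $B = C^*(\mathbb{Z}, A, \alpha)_J$, which is the $y$-orbit breaking subalgebra in the sense of Notation~\ref{not_Y_orbit_subbrkalg}. The bulk of the work is then a checklist against the hypotheses of Theorem~\ref{main_thm}, and essentially every item has already been packaged as a preparatory lemma.

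First I would handle the dynamical hypotheses on $\alpha$. The fact that $\alpha$ lies over the minimal homeomorphism $h$ together with the assumption that every fiber of $A$ is simple gives minimality of $\alpha$ by Lemma~\ref{bundle_minimal}. Kishimoto's condition for $\alpha$ is exactly Lemma~\ref{bundle_Kishimoto}, and $\alpha|_{C(X)}$ is the automorphism dual to $h$, which is minimal on $C(X)$ because $h$ is minimal. Finiteness of $C^*(\mathbb{Z}, A, \alpha)$ follows from Lemma~\ref{lem_finiteness}, which uses that $D$ has a tracial state to produce an $\alpha$-invariant tracial state on $A$ by a Markov--Kakutani argument and then composes with the standard conditional expectation.

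Next I would verify the conditions on $J|_C = C_0(X \setminus y)$ required by Lemma~\ref{J_prop}. Given any $k \in \mathbb{Z} \setminus \{0\}$, I need a function $f \in C(X)$ with $0 \leq f \leq 1$, $f(y) = 1$, $f(h^k y) = 0$, and $\supp(f) \cap h^{-k}(\supp(f)) = \varnothing$. Since $(X, h)$ is an infinite minimal system it has no periodic points, so $y \ne h^k y$, and I can choose an open neighborhood $U$ of $y$ with $U \cap h^{-k}(U) = \varnothing$ and take $f$ supported in $U$ with $f(y) = 1$. For assumption~\eqref{main_thm_assumption_1} of Theorem~\ref{main_thm}, I use the fact that an infinite compact Hausdorff space carrying a minimal homeomorphism has no isolated points, so for any nonzero $c \in C(X)_+$ the open set $\{c > 0\}$ contains $n$ pairwise disjoint open subsets, yielding $n$ orthogonal nonzero positive elements in $\overline{c C(X) c}$. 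Assumption~\eqref{main_thm_assumption_2} of Theorem~\ref{main_thm} is exactly the conclusion of Lemma~\ref{directed_subalg}, which is where the hypothesis that $\alpha$ is pseudoperiodic generated enters.

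I expect no serious obstacles here: the hard analytic and dynamical work has been done in the preparatory lemmas, and the proof reduces to checking a list. The one thing that might require care in exposition is noting that the $C(X)$-subalgebra $C \subset A$ is genuinely $\alpha$-invariant (because $\alpha(\eta(f)) = \eta(f \circ h^{-1})$ lies in $\eta(C(X))$) and then observing that the $C$ of Theorem~\ref{main_thm} plays a purely structural role, with Lemma~\ref{directed_subalg} providing the bridge from positive elements of $A$ to positive elements of $C(X)$ that is demanded by hypothesis~\eqref{main_thm_assumption_2}. Once these verifications are assembled, Theorem~\ref{main_thm} yields the conclusion.
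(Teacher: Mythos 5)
Your proof is correct and follows essentially the same route as the paper: the paper's proof of Theorem~\ref{thm_for_application} simply combines Proposition~\ref{bundle_trivial_appl}, Lemma~\ref{directed_subalg} and Lemma~\ref{lem_finiteness}, where Proposition~\ref{bundle_trivial_appl} is precisely the application of Theorem~\ref{main_thm} whose hypothesis checklist you carry out explicitly. Your verifications of the hypotheses of Lemma~\ref{J_prop} and of assumption~(1) of Theorem~\ref{main_thm} (details the paper omits as ``a trivial application'') are sound.
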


\begin{proof}
	Combine Proposition~\ref{bundle_trivial_appl},
	Lemma~\ref{directed_subalg} and Lemma~\ref{lem_finiteness}.
\end{proof}

\section{Mapping Torus and the recursive subhomogeneous structure}\label{mapping_torus_sec}

Given a C*-algebra $D$ and a $*$-automorphism $\gamma$ of $D$,
the mapping torus of the pair $(D, \gamma)$ is defined by \[
	M_{\gamma} = \{a \in C([0,1], D) \colon a(1) = \gamma(a(0))\}.
\]
The mapping torus can also be defined as the pullback in the diagram \begin{center}
	\begin{tikzcd}
		& D \arrow[d, "{(\mathrm{id}, \gamma)}"] \\
		{C([0,1], D)} \arrow[r, "{\rho_{\{0,1\}}}"] & D\oplus D,                             
	\end{tikzcd}
\end{center}
where $\rho_{\{0,1\}}$ is the restriction map.
The mapping torus has appeared in 
\cite{Paschke1983mapping}, \cite[Def.~5.2.14]{Lin2001Intro},
and \cite[Sect.~4.5]{APS2011pullbacks}.

Let $S^1 = \{z \in \mathbb{C} \colon |z| = 1\}$.
The multiplier algebra $M(M_{\gamma})$ is the set of
strictly continuous functions $a \colon [0,1] \to M(D)$ such that
$a(1) = M(\gamma)(a(0))$.
Then the $*$-homomorphism \[
	\eta \colon C(S^1) \to Z(M(M_{\gamma})) , \quad 
	\eta(f)(t) = f(\mathrm{e}^{2\pi \mathrm{i} t}) 1_{M(D)}
	\enspace \text{for} \enspace t \in [0,1]
\]
makes $M_{\gamma}$ a $C(S^1)$-algebra.
Nevertheless, we only focus on the unital case.

\begin{remark}
	We can identify $C(S^1)$ with
	$\big\{ f \in C([0,1]) \colon f(0) = f(1) \big\}$
	via the map sending $g \in C(S^1)$ to the function
	$f(t) = f(\mathrm{e}^{2\pi \mathrm{i}t})$ for $t \in [0,1]$.
	This gives a shorter description of the structure map such that
	$[\eta(f)a](t) = f(t)a(t)$ for all $a \in M_{\gamma}$.
	We switch as convenient between the two $S^1$ formulations.
\end{remark}

\begin{lemma}\label{mapping_torus_locally_trivial}
	Let $D$ be a unital C*-algebra,
	and let $\gamma \in \Aut(D)$.
	The mapping torus $M_{\gamma}$ is a locally trivial $C(S^1)$-algebra.
\end{lemma}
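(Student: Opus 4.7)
The plan is to exhibit, for every point of $S^1$, a closed neighborhood $K$ on which $M_{\gamma}(K)$ is trivial as a $C(K)$-algebra. Identifying $S^1$ with $[0,1]/(0 \sim 1)$, the argument splits into two cases according to whether the chosen point lies away from or at the gluing point $1 = \mathrm{e}^{2 \pi \mathrm{i} \cdot 0}$.

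First I would handle the easy case. For any $z_0 = \mathrm{e}^{2 \pi \mathrm{i} t_0}$ with $t_0 \in (0,1)$, pick $\delta > 0$ so that $K_0 = [t_0 - \delta, t_0 + \delta] \subset (0,1)$, and let $K \subset S^1$ be the corresponding closed arc. The restriction map $a \mapsto a|_{K_0}$ sends $M_\gamma$ onto $C(K_0, D)$, since any continuous function $K_0 \to D$ extends by a cutoff to a function on $[0,1]$ vanishing near the endpoints, which trivially satisfies $a(1) = \gamma(a(0))$. The kernel of this map coincides with the defining ideal $\eta(C_0(S^1 \setminus K)) M_\gamma$, yielding $M_\gamma(K) \cong C(K, D)$ as $C(K)$-algebras.

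The harder case is the gluing point $z_0 = 1$, where the chart must wrap around. Pick $\epsilon \in (0, 1/2)$ and let $K \subset S^1$ be the arc $\{\mathrm{e}^{2 \pi \mathrm{i} s} \colon s \in [-\epsilon, \epsilon]\}$. Restriction to $[0, \epsilon] \cup [1 - \epsilon, 1]$ identifies $M_\gamma(K)$ with the pullback subalgebra
\[
R = \big\{ (b_1, b_2) \in C([0, \epsilon], D) \oplus C([1-\epsilon, 1], D) \colon b_2(1) = \gamma(b_1(0)) \big\} .
\]
The key construction is a $C(K)$-linear $*$-isomorphism $\phi \colon R \to C([-\epsilon, \epsilon], D) \cong C(K, D)$ given by
\[
\phi(b_1, b_2)(s) = \begin{cases} b_1(s), & s \in [0, \epsilon], \\ \gamma^{-1}(b_2(s + 1)), & s \in [-\epsilon, 0]. \end{cases}
\]
The twist by $\gamma^{-1}$ converts the boundary condition $b_2(1) = \gamma(b_1(0))$ into continuity of $\phi(b_1, b_2)$ at $s = 0$, and $\phi$ is clearly a $*$-isomorphism with inverse given by restriction. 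Compatibility with the $C(K)$-module structure follows from the parametrization $f \mapsto \tilde{f}(s) = f(\mathrm{e}^{2 \pi \mathrm{i} s})$ together with the scalar-linearity of $\gamma^{-1}$: applying $\phi$ to $\eta(f) \cdot (b_1, b_2)$ gives $\tilde{f}(s) \phi(b_1, b_2)(s)$ on both halves.

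The main obstacle, if any, is arranging the wraparound chart at $z_0 = 1$: once one recognizes that the boundary identification in the mapping torus is exactly what must be canceled by a $\gamma^{-1}$ on one side of the arc, the rest is bookkeeping. The two cases together cover every point of $S^1$, and an atlas is obtained by a standard compactness argument, so $M_\gamma$ is locally trivial.
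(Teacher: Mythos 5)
Your proposal is correct and follows essentially the same route as the paper's proof: restriction gives the trivialization away from the gluing point, and near $1 \in S^1$ a chart is obtained by composing with $\gamma^{-1}$ on one half of the arc to cancel the boundary identification $a(1)=\gamma(a(0))$. The only difference is cosmetic (you reparametrize the wraparound chart over $[-\epsilon,\epsilon]$ rather than keeping the two subintervals of $[0,1]$), so nothing further is needed.
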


\begin{proof}
	Let $z \in S^1$. If $z \ne 1$,
	then we can take closed neighborhood $K$ of $z$ such that
	$K \cap \{1\} = \varnothing$.
	The restriction on $K$ gives a $C(K)$-linear $*$-isomorphism
	from $M_{\gamma}(K)$ to $C(K, D)$.
	If $z = 1$, we take $K = \{z \in S^1 \colon \mathrm{Re}(z) \geq 0\}$.
	Then under the identification \[
	M_{\gamma}(K) = \Big\{
	a \in C\big(\big[0,\tfrac{1}{4}\big] \cup \big[\tfrac{3}{4}, 1\big], D\big)
	\colon a(1) = \gamma(a(0)) \Big\} .
	\]
	Now \[
	C = \Big\{	a \in
	C\big(\big[0,\tfrac{1}{4}\big] \cup \big[\tfrac{3}{4}, 1\big], D\big)
	\colon a(1) = a(0) \Big\} 
	\]
	is clearly a trivial $C(K)$-algebra,
	and the map $\varphi \colon M_{\gamma}(K) \to C$ defined by \[
		\varphi(a)(t) = \begin{cases}
			a(t)  \quad & t \in \left[0, \frac{1}{4}\right] \\
			\gamma^{-1}[a(t)] \quad  & t \in \left[\frac{3}{4}, 1\right]
		\end{cases}
	\]
	gives an isomorphism of $C(K)$-algebras.
\end{proof}

The following lemma is trivial by Lemma~\ref{mapping_torus_locally_trivial}.
For convenience, we give all the maps explicitly.

\begin{lemma}\label{lem_pullback_mapping_torus}
	Let $K_1$, $K_2$ be closed subsets of $S^1$ such that
	under the identification \[
	K_1 = \big[0, \tfrac{1}{4}\big] \cup \big[ \tfrac{3}{4}, 1\big]\quad \text{and}
	\quad K_2 = \big[ \tfrac{1}{8}, \tfrac{7}{8} \big] .
	\]
	The mapping torus $M_{\gamma}$ in Lemma \ref{mapping_torus_locally_trivial}
	is isomorphic to the pullback of the following diagram:
	\begin{center}
		\begin{tikzcd}
		& {C(K_1, D)} \arrow[d, "\psi_1"] \\
		{C(K_2, D)} \arrow[r, "\psi_2"] & {C(K_1 \cap K_2, D),}           
		\end{tikzcd}
	\end{center}
	where \[
		\psi_1(a)(t) = \begin{cases}
			a(t) \, & t \in \left[ \tfrac{1}{8}, \tfrac{1}{4} \right] \\
			\gamma(a(t)) \, & t \in \left[ \tfrac{3}{4}, \tfrac{7}{8} \right]
		\end{cases}
		\enspace \quad \text{and} \quad
		\psi_2(a) = a |_{K_1 \cap K_2} .
	\]
	In fact, the map \[
		M_{\gamma} \to C(K_1, D) \oplus_{\psi_0, \psi_1} C(K_2, D),
		\quad a \mapsto \big( \phi_0(a), \phi_1(a) \big),
	\]
	where \[
		\phi_1(a)(t) = \begin{cases}
			a(t) \, & t \in \left[ 0, \tfrac{1}{4} \right] \\
			\gamma^{-1}(a(t)) \, & t \in \left[ \tfrac{3}{4}, 1 \right]
		\end{cases}
		\enspace \quad \text{and} \quad
		\phi_2(a) = a |_{K_2}
	\]
	gives an isomorphism,
	and the set $\mathcal{A} = \big\{(K_1, \phi_1), (K_2, \phi_2)\big\}$
	is an atlas of $M_{\gamma}$.
\end{lemma}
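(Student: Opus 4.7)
The plan is to build the isomorphism $M_{\gamma} \to C(K_1, D) \oplus_{\psi_1, \psi_2} C(K_2, D)$ by hand, showing that $(\phi_1, \phi_2)$ is well-defined, lands in the pullback, and admits an explicit inverse. First I would check that $\phi_1$ and $\phi_2$ map $M_{\gamma}$ into $C(K_1, D)$ and $C(K_2, D)$ respectively. For $\phi_2$ this is just restriction. For $\phi_1$, the only nontrivial point is to see that the piecewise definition yields a continuous function on $K_1 \subset S^1$: under the $S^1$-identification $0 \sim 1$, consistency at this boundary point demands $\phi_1(a)(0) = \phi_1(a)(1)$, i.e., $a(0) = \gamma^{-1}(a(1))$, which is exactly the mapping torus condition $a(1) = \gamma(a(0))$. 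This reuses the local trivialization near $1 \in S^1$ already constructed in the proof of Lemma~\ref{mapping_torus_locally_trivial}.

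Next I would verify the pullback compatibility $\psi_1 \circ \phi_1 = \psi_2 \circ \phi_2$ on $M_{\gamma}$. On the arc $[\tfrac{1}{8}, \tfrac{1}{4}]$ both sides equal $a(t)$ directly. On $[\tfrac{3}{4}, \tfrac{7}{8}]$, $\psi_1(\phi_1(a))(t) = \gamma(\gamma^{-1}(a(t))) = a(t)$, which also matches $\psi_2(\phi_2(a))(t) = a(t)$. Hence $(\phi_1, \phi_2)$ indeed maps into $C(K_1, D) \oplus_{\psi_1, \psi_2} C(K_2, D)$.

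For the inverse, given $(b_1, b_2)$ in the pullback I would define $a \in C([0,1], D)$ piecewise by
\[
a(t) = \begin{cases}
b_1(t) & t \in [0, \tfrac{1}{4}] \\
b_2(t) & t \in [\tfrac{1}{8}, \tfrac{7}{8}] \\
\gamma(b_1(t)) & t \in [\tfrac{3}{4}, 1].
\end{cases}
\]
The compatibility conditions $\psi_1(b_1) = \psi_2(b_2)$ force agreement on the overlaps $[\tfrac{1}{8}, \tfrac{1}{4}]$ and $[\tfrac{3}{4}, \tfrac{7}{8}]$, so $a$ is well-defined and continuous. The mapping torus condition follows from the fact that $b_1 \in C(K_1, D)$ with $K_1 \subset S^1$: since $0$ and $1$ represent the same point of $S^1$, one has $b_1(0) = b_1(1)$, whence $a(1) = \gamma(b_1(1)) = \gamma(b_1(0)) = \gamma(a(0))$. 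A direct check shows this map is a two-sided inverse to $(\phi_1, \phi_2)$, giving the desired $*$-isomorphism.

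Finally, to conclude that $\mathcal{A} = \{(K_1, \phi_1), (K_2, \phi_2)\}$ is an atlas in the sense defined before Lemma~\ref{lem_locally_trivial_and_lie_over}, I would note that each $\phi_j$ is a $C(K_j)$-linear $*$-isomorphism $M_{\gamma}(K_j) \cong C(K_j, D)$ (the restriction of $\phi_j$ above descending to the fiber algebra, as in Lemma~\ref{mapping_torus_locally_trivial}), and that $\mathrm{int}(K_1) \cup \mathrm{int}(K_2) = S^1$, since the interiors consist of the open arcs strictly containing $1 \in S^1$ and $-1 \in S^1$ respectively, which together cover $S^1$. The only mildly subtle step is keeping the $S^1$ versus $[0,1]$ identifications straight when establishing the inverse; beyond that the argument is a routine gluing computation.
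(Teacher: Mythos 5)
Your proposal is correct and is essentially the argument the paper has in mind: the paper declares this lemma trivial in view of Lemma~\ref{mapping_torus_locally_trivial} and only records the maps, and your verification (continuity of $\phi_1$ at the point $0\sim 1$ via the mapping torus condition, the compatibility $\psi_1\circ\phi_1=\psi_2\circ\phi_2$, the explicit gluing inverse, and the covering of $S^1$ by the interiors of $K_1$ and $K_2$) is exactly the routine check being suppressed. No gaps.
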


\begin{lemma}\label{lem_homotopy_and_trivial_bundle}
	Let $D$ be a unital C*-algebra and let $\gamma \in \Aut(D)$.
	Then $M_{\gamma}$ is a trivial $C(S^1)$-algebra
	if and only if $\gamma$ is homotopic to $\mathrm{id}_D$ in $\Aut(D)$.
\end{lemma}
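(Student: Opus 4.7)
The plan is to prove both implications by explicit construction. For the ``if'' direction, given a continuous path $(\gamma_t)_{t \in [0,1]}$ in $\Aut(D)$ with $\gamma_0 = \mathrm{id}_D$ and $\gamma_1 = \gamma$, I will define
\[
\Phi \colon M_\gamma \longrightarrow C(S^1, D), \qquad \Phi(a)(t) = \gamma_t^{-1}(a(t)),
\]
where $S^1$ is identified with $[0,1]/\{0 \sim 1\}$. The boundary identity $\Phi(a)(1) = \gamma^{-1}(\gamma(a(0))) = a(0) = \Phi(a)(0)$ ensures $\Phi(a) \in C(S^1, D)$. It is plainly a $C(S^1)$-linear $*$-homomorphism with two-sided inverse $\Phi^{-1}(b)(t) = \gamma_t(b(t))$. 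The one item requiring care is the continuity in $t$ of $\gamma_t^{-1}(a(t))$; for this, I first note that inversion is continuous in the pointwise-norm topology on $\Aut(D)$ via the estimate $\|\gamma_t^{-1}(d) - \gamma_{t_0}^{-1}(d)\| \leq \|d - \gamma_t(\gamma_{t_0}^{-1}(d))\|$, and then a standard triangle inequality handles the composition.

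For the ``only if'' direction, suppose $\Phi \colon M_\gamma \to C(S^1, D')$ is a $C(S^1)$-linear $*$-isomorphism. The plan is to extract from $\Phi$ a continuous family of $*$-isomorphisms $\Psi_t \colon D \to D'$ and exploit the gluing relation at $z = 1 \in S^1$. For each $t \in (0,1)$, $C(S^1)$-linearity forces $\Phi$ to descend on fibers to a $*$-isomorphism $\Psi_t$ with $\Psi_t(a(t)) = \Phi(a)(t)$; at the gluing point, identifying the fiber of $M_\gamma$ with $D$ via $a \mapsto a(0)$ (respectively $a \mapsto a(1)$) produces $\Psi_0$ and $\Psi_1$ satisfying the same defining relation at $t = 0$ and $t = 1$. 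Continuity of $t \mapsto \Psi_t$ in the pointwise-norm topology follows from the estimate
\[
\|\Psi_t(d) - \Psi_{t_0}(d)\| \leq \|d - a(t)\| + \|\Phi(a)(t) - \Phi(a)(t_0)\|,
\]
where $a \in M_\gamma$ is any lift with $a(t_0) = d$.

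The key boundary relation comes from $\Phi(a)(0) = \Phi(a)(1)$ in $C(S^1, D')$ combined with $a(1) = \gamma(a(0))$ in $M_\gamma$: these force $\Psi_1 \circ \gamma = \Psi_0$. Setting $\sigma_t = \Psi_0^{-1} \circ \Psi_t \in \Aut(D)$ yields a continuous path with $\sigma_0 = \mathrm{id}_D$ and $\sigma_1 = \gamma^{-1}$, and inverting this path (using again that inversion is continuous in the pointwise-norm topology) produces the desired homotopy from $\mathrm{id}_D$ to $\gamma$. The main obstacle is the bookkeeping in the ``only if'' direction, specifically keeping the two identifications of the fiber of $M_\gamma$ at $z = 1$ consistent so that $\Psi_0$, $\Psi_1$, and the boundary equation $\Psi_1 \circ \gamma = \Psi_0$ all come out correctly; once those identifications are fixed, the continuity arguments are essentially the same as those used in the ``if'' direction.
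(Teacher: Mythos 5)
Your proposal is correct and takes essentially the same approach as the paper: an explicit fiberwise construction of the $C(S^1)$-linear isomorphism from the homotopy in one direction, and extraction of a continuous family of fiber automorphisms from the trivialization in the other. The only differences are bookkeeping --- you build the map $M_\gamma \to C(S^1,D)$ rather than its inverse, and you normalize by $\Psi_0^{-1}$ and then invert the resulting path from $\mathrm{id}_D$ to $\gamma^{-1}$, which is if anything slightly more careful than the paper's direct assertion that $\sigma_0 = \mathrm{id}_D$.
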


\begin{proof}
	Identify $C(S^1, D)$ with \[
		\big\{ a \in C([0,1], D) \colon a(0) = a(1)\big\} .
	\]
	If there exists $*$-automorphisms $\psi_t \colon D \to D$
	for $t \in [0,1]$ such that $\psi_0 = \mathrm{id}_D$, $\psi_1 = \gamma$
	and for every fixed $d \in D$ the map $[0,1] \to D$,
	$t \mapsto \psi_t(d)$, is continuous,
	then the map $\varphi: C(S^1, D) \to M_{\gamma}$ defined by \[
		\varphi(b)(t) = \psi_t(b(t))
	\]
	gives an isomorphism of $C(S^1)$-algebras.
	
	Now assume that $\varphi \colon C(S^1, D) \to M_{\gamma}$
	is a $C(S^1)$-linear $*$-isomorphism.
	Define a family of $*$-homomorphisms $\sigma_t \colon D \to D$ by
	$\sigma_t(d) = \varphi(1 \otimes d)(t)$ for $t \in [0,1]$.
	Then $\sigma_0 = \mathrm{id}_D$, $\sigma_1 = \gamma$
	and $t \mapsto \sigma_t(d)$ is continuous for all $d \in D$.
	It remains to show that $\sigma_t$, for each $t$, is in $\Aut(D)$.
	
	Let $d \in D \setminus \{0\}$. 
	Since the function $1 \otimes d$ is nowhere vanishing
	and $\varphi$ is $C(S^1)$-linear,
	$\varphi(1 \otimes d)$ is also nowhere vanishing.
	So $\sigma_t$ is injective.
	On the other hand,
	let $d^\prime \in D$ and $t_0 \in [0,1]$ be arbitrary.
	There exists $b \in M_{\gamma}$ such that $b(t_0) = d^\prime$.
	Set $d_1 = \varphi^{-1}(b)(t_0)$.
	Then $\varphi^{-1}(b) - 1 \otimes d_1$ vanishes at $t_0$,
	and hence $b - \varphi(1 \otimes d_1)$ vanishes at $t_0$.
	So $\sigma_{t_0}(d_1) = d^\prime$.
	This proves that $\sigma_t$ is surjective and completes the proof.
\end{proof}

Let $\zeta$ be an irrational number with $0 < \zeta < 1$,
and let $h(z) = \mathrm{e}^{2\pi\mathrm{i}\zeta}z$ for $z \in S^1$.
Then $h$ sends $[0, 1 - \zeta]$ to $[\zeta, 1]$ and
sends $[1 - \zeta, 1]$ to $[0, \zeta]$ under the idenification.
The following lemma is an abstraction of this example,
and is well known in dynamical systems involving rotation numbers.
% find a reference in dynamical system area if possible

\begin{lemma}\label{orientation_piecewise}
	Let $h \colon S^1 \to S^1$ be an orientation perserving homeomorphism.
	Then there are unique $t_0 \in (0,1]$, $s_0 \in [0,1)$
	and strictly increasing continuous functions
	$k_0 \colon [0, t_0] \to [s_0, 1]$ and $k_1 \colon [t_0, 1] \to [0, s_0]$
	such that $k_0(t_0) = 1$, $k_1(t_0) = 0$, $k_0(0) = k_1(1) = s_0$, and \[
		h(\mathrm{e}^{2 \pi \mathrm{i} t}) = \begin{cases}
			\mathrm{e}^{2 \pi \mathrm{i} k_0(t)} \quad & t \in [0, t_0] \\
			\mathrm{e}^{2 \pi \mathrm{i} k_1(t)} \quad & t \in [t_0, 1].
		\end{cases}
	\]
\end{lemma}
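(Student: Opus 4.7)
The plan is to use the universal cover $p \colon \mathbb{R} \to S^1$, $p(t) = e^{2\pi \mathrm{i} t}$, to lift $h$ to a self-map of $\mathbb{R}$ and then cut the lift at the point where it crosses the integer value $1$.

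First, I would invoke the covering space lifting lemma: the composition $h \circ p \colon \mathbb{R} \to S^1$ admits a continuous lift $H \colon \mathbb{R} \to \mathbb{R}$ with $p \circ H = h \circ p$, and such a lift is unique up to adding an integer constant. Normalize the choice of $H$ by requiring $H(0) \in [0,1)$, and set $s_0 = H(0)$. Standard properties of such lifts give $H(t+1) = H(t) + 1$ for all $t \in \mathbb{R}$; in particular $H(1) = s_0 + 1$. Since $h$ is an orientation-preserving homeomorphism of $S^1$, $H$ is a homeomorphism of $\mathbb{R}$ onto itself, and orientation-preservation forces $H$ to be strictly increasing (if $H$ were strictly decreasing, it would reverse the induced orientation on $S^1$).

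Next, since $H$ is continuous and strictly increasing with $H(0) = s_0 < 1 \leq s_0 + 1 = H(1)$, the intermediate value theorem gives a unique $t_0 \in (0, 1]$ with $H(t_0) = 1$ (note $t_0 = 1$ exactly when $s_0 = 0$). Define
\[
k_0 = H|_{[0, t_0]} \colon [0, t_0] \to [s_0, 1]
\qquad \text{and} \qquad k_1(t) = H(t) - 1 \text{ for } t \in [t_0, 1].
\]
Both $k_0$ and $k_1$ inherit strict monotonicity and continuity from $H$, and a direct check gives $k_0(0) = s_0$, $k_0(t_0) = 1$, $k_1(t_0) = 0$, $k_1(1) = s_0$, with $k_1$ mapping onto $[0, s_0]$. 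Finally, for $t \in [0, t_0]$ we have $h(e^{2\pi \mathrm{i} t}) = p(H(t)) = e^{2\pi \mathrm{i} k_0(t)}$; for $t \in [t_0, 1]$ we have $h(e^{2\pi \mathrm{i} t}) = p(H(t)) = p(k_1(t) + 1) = e^{2\pi \mathrm{i} k_1(t)}$, as required.

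For uniqueness, $s_0 \in [0,1)$ is forced by $h(1) = e^{2\pi \mathrm{i} s_0}$, and $t_0$ is then forced as the unique preimage in $(0,1]$ of $1 \in S^1$ under $t \mapsto h(e^{2\pi \mathrm{i} t})$ (the existence of this preimage follows from the surjectivity of $h$ together with the monotonicity constraint on $k_0, k_1$). Once $s_0$ and $t_0$ are fixed, $k_0$ and $k_1$ are the unique continuous lifts of $t \mapsto h(e^{2\pi \mathrm{i} t})$ under $p$ with the prescribed endpoint values. The only point requiring genuine care is the deduction that an orientation-preserving $h$ has a \emph{strictly increasing} lift $H$ — the rest is bookkeeping with the covering map. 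This could be argued cleanly by noting that $H$ is a homeomorphism $\mathbb{R} \to \mathbb{R}$ hence strictly monotone, and then ruling out the decreasing case by comparing $H(1) = H(0) + 1$ with $H(0)$.
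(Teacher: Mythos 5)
Your proof is correct and follows essentially the same route as the paper: both arguments lift $h$ to a strictly increasing degree-one map of $\mathbb{R}$ and cut at the unique parameter where the lift crosses an integer, differing only in that you normalize the lift by $H(0)\in[0,1)$ while the paper keeps an arbitrary lift $k$ and subtracts the integer $n$ with $k(0)\in[n,n+1)$. No gaps.
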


\begin{proof}
	Let $k \colon \mathbb{R} \to \mathbb{R}$ be a lift of $h:S^1 \to S^1$.
	Then $k$ is strictly increasing and satisfies $k(t+1) = k(t) + 1$
	and $\exp(2 \pi \mathrm{i} k(t)) = h(\mathrm{e}^{2 \pi \mathrm{i} t})$
	for all $t \in \mathbb{R}$.
	Suppose that $k(0) \in [n, n+1)$ for some $n \in \mathbb{Z}$.
	Let $t_0 = \min(\{\tau \in (0, 1] \colon k(\tau) \in \mathbb{Z}\})$,
	$s_0 = k(0) - n$,
	$k_0 (t) = k(t) - n$,
	and $k_1(t) = k(t) - k(t_0)$.
	All the equations are easy to check.
	Note that $t_0$ is unique as a lift of $h^{-1}(1)$ in $(0,1]$
	and $s_0$ is unique as a lift of $h(1)$ in $[0,1)$.
	The rest is trivial.
\end{proof}

\begin{proposition}\label{action_on_ZDgamma}
	Let $D$ be a unital C*-algebra, and let $\gamma \in \Aut(D)$.
	Suppose that $h \colon S^1 \to S^1$ is an
	orientation preserving homeomorphism,
	and let $t_0, s_0, k_0, k_1$ be as in Lemme~\ref{orientation_piecewise}.
	Then the formula \[
		\alpha(a)(t) = \begin{cases}
			a(k_1^{-1}(t)) \qquad & t \in [0, s_0] \\
			\gamma(a(k_0^{-1}(t))) \qquad & t \in [s_0, 1]
		\end{cases}
	\]
	for $a \in M_{\gamma}$ defines a $*$-automorphism
	of $M_{\gamma}$ which lies over $h$.
\end{proposition}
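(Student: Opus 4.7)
The plan is a direct verification in four steps, exploiting the fact that $k_0$ and $k_1$ together encode $h^{-1}$: for $t \in [s_0, 1]$ one has $h^{-1}(\mathrm{e}^{2\pi\mathrm{i}t}) = \mathrm{e}^{2\pi\mathrm{i}k_0^{-1}(t)}$ with $k_0^{-1}(t) \in [0, t_0]$, while for $t \in [0, s_0]$ one has $h^{-1}(\mathrm{e}^{2\pi\mathrm{i}t}) = \mathrm{e}^{2\pi\mathrm{i}k_1^{-1}(t)}$ with $k_1^{-1}(t) \in [t_0, 1]$. So the two-branch formula for $\alpha$ amounts to ``apply $a \circ h^{-1}$, but twist by $\gamma$ on the branch where $h^{-1}$ crosses the $0 \sim 1$ seam of $S^1$''.

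First I would check that $\alpha(a) \in M_\gamma$ whenever $a \in M_\gamma$. Continuity at $t = s_0$ uses the mapping torus condition: the left limit equals $a(k_1^{-1}(s_0)) = a(1) = \gamma(a(0)) = \gamma(a(k_0^{-1}(s_0)))$, which equals the right limit. The boundary condition $\alpha(a)(1) = \gamma(\alpha(a)(0))$ reduces to $\gamma(a(k_0^{-1}(1))) = \gamma(a(k_1^{-1}(0)))$, i.e. $\gamma(a(t_0)) = \gamma(a(t_0))$. Since each branch is a pointwise composition of a $*$-homomorphism ($\mathrm{id}_D$ or $\gamma$) with precomposition by a homeomorphism, $\alpha$ is immediately linear, multiplicative and $*$-preserving.

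Next I would exhibit the inverse explicitly, by setting
\[
\beta(b)(s) = \begin{cases}
\gamma^{-1}(b(k_0(s))) \quad & s \in [0, t_0] \\
b(k_1(s)) \quad & s \in [t_0, 1]
\end{cases}
\]
and verifying by the same two checks that $\beta(b) \in M_\gamma$ (continuity at $s=t_0$ uses $\gamma^{-1}(b(1)) = b(0) = b(k_1(t_0))$; the seam condition uses $k_0(0) = s_0 = k_1(1)$). The identities $\alpha \circ \beta = \beta \circ \alpha = \mathrm{id}_{M_\gamma}$ then fall out by direct substitution on each of the four resulting subintervals.

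Finally, to see that $\alpha$ lies over $h$, pick $f \in C(S^1)$ and identify it with a function in $C([0,1])$ satisfying $f(0) = f(1)$. In this picture $[\eta(f)a](t) = f(t)a(t)$. For $t \in [0, s_0]$ one computes $\alpha(\eta(f)a)(t) = f(k_1^{-1}(t))a(k_1^{-1}(t)) = (f \circ h^{-1})(t) \alpha(a)(t)$, and for $t \in [s_0, 1]$ one computes $\alpha(\eta(f)a)(t) = \gamma(f(k_0^{-1}(t))a(k_0^{-1}(t))) = f(k_0^{-1}(t))\gamma(a(k_0^{-1}(t))) = (f \circ h^{-1})(t) \alpha(a)(t)$, where the middle equality uses that $f(k_0^{-1}(t)) \in \mathbb{C}$ commutes with $\gamma$. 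This is exactly the identity $\alpha(\eta(f)a) = \eta(f \circ h^{-1})\alpha(a)$. There is no real obstacle here; the only thing to be careful about is consistently tracking the identification $S^1 \leftrightarrow [0,1]/\{0 \sim 1\}$ and the direction in which $k_0, k_1$ are defined versus inverted.
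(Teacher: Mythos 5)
Your proof is correct and follows the same route as the paper: the paper verifies exactly the two well-definedness checks you do (continuity at $s_0$ via $a(1)=\gamma(a(0))$, and the seam condition via $k_0(t_0)=1$, $k_1(t_0)=0$) and then declares ``the rest is trivial.'' Your explicit inverse $\beta$ and the verification that $\alpha$ lies over $h$ simply fill in those omitted routine details, and both check out.
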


\begin{proof}
	We first check the continuty of $\alpha(a)$ at $s_0$. We have \[
		a(k^{-1}_1(s_0)) = a(1) = \gamma(a(0)) = \gamma(a(k_0^{-1}(s_0))) .
	\]
	So $\alpha(a) \in C([0,1], D)$. For the second, we check \[
		\alpha(a)(1) = \gamma[a(k_0^{-1}(1))] = \gamma[a(t_0)]
		= \gamma[a(k_1^{-1}(0))] = \gamma(\alpha(a)(0)).
	\]
	Thus $\alpha$ is a well-defined map on $M_{\gamma}$.
	The rest is trivial.
\end{proof}

\begin{proposition}\label{prop_larsubalg_of_mapping_torus}
	Let $D$ be a simple unital C*-algebra with nonempty tracial state space.
	Let $\gamma \in \Aut(D)$ and let $M_{\gamma}$ be as
	in Lemma~\ref{mapping_torus_locally_trivial}.
	Let $h \colon S^1 \to S^1$ be an irrational rotation and
	let $\alpha \in \Aut(A)$ be as in Lemma~\ref{action_on_ZDgamma}.
	Suppose that $\{\gamma^n \colon n \in \mathbb{Z}\}$ is pseudoperiodic.
	Fix a point $y \in S^1$.
	Then $C^*(\mathbb{Z}, M_{\gamma}, \alpha)_y$ is a large subalgebra
	of crossed product type	in $C^*(\mathbb{Z}, M_{\gamma}, \alpha)$.
\end{proposition}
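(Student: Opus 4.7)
My plan is to reduce this directly to Theorem~\ref{thm_for_application}. All of the hypotheses of that theorem are in place almost for free: $S^1$ is an infinite compact Hausdorff space, $D$ is simple unital with nonempty tracial state space, $M_\gamma$ is a locally trivial $C(S^1)$-algebra with typical fiber $D$ by Lemma~\ref{mapping_torus_locally_trivial}, the irrational rotation $h$ is a minimal homeomorphism of $S^1$, and $\alpha \in \Aut(M_\gamma)$ lies over $h$ by Proposition~\ref{action_on_ZDgamma}. So the only nontrivial thing to verify is that $\alpha$ is pseudoperiodic generated in the sense of Definition~\ref{def_ppg}.

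To check this, I would work with the explicit atlas $\mathcal{A} = \{(K_1, \phi_1), (K_2, \phi_2)\}$ furnished by Lemma~\ref{lem_pullback_mapping_torus}. A direct inspection shows that, under the natural identification of each fiber $M_\gamma(y)$ with $D$ via evaluation at $y$, each $\phi_i|_x \colon M_\gamma(x) \to D$ is a power of $\gamma$, in fact either $\mathrm{id}_D$ or $\gamma^{-1}$; the latter occurs only on the ``wrapped'' component $[3/4, 1]$ of $K_1$. Next, by iterating the defining formula for $\alpha$ in Proposition~\ref{action_on_ZDgamma}, a routine induction on $n$ gives
\[
\alpha^n(a)(t) = \gamma^{m(t,n)}\bigl(a(h^{-n}(t))\bigr)
\]
for every $a \in M_\gamma$, where $m(t,n) \in \mathbb{Z}$ records how many times the backward orbit segment $\{h^{-k}(t)\}_{k=0}^{n-1}$ lies in the arc $[\zeta,1]$ on which the formula for $\alpha$ inserts a factor of $\gamma$ (equivalently, how many times the forward orbit from $h^{-n}(t)$ to $t$ crosses the cut at $1 \in S^1$). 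Combining these two facts, each $\alpha(\mathcal{A})_{n,x}^{i,j}$ will equal $\gamma^{m(x,n) + k}$ for some bounded integer correction $k \in \{-1, 0, 1\}$ contributed by the two trivializations.

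Consequently the set
\[
S = \bigl\{\alpha(\mathcal{A})_{n,x}^{i,j} : n \in \mathbb{Z}, \, i,j \in \{1,2\}, \, x \in h^n(K_i) \cap K_j\bigr\}
\]
will be contained in $\{\gamma^n : n \in \mathbb{Z}\}$, which is pseudoperiodic by hypothesis. Since the pseudoperiodicity condition is clearly inherited by subsets (the same $b$ witnesses it), $S$ will be pseudoperiodic, so $\alpha$ is pseudoperiodic generated. Applying Theorem~\ref{thm_for_application} then yields the conclusion.

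The main obstacle is purely bookkeeping: one must carefully handle the identifications of $M_\gamma(y)$ with $D$ at and near the cut point $1 \in S^1$, where the two evaluations of a section $a \in M_\gamma$ (at $t=0$ versus $t=1$) differ by $\gamma$, and correctly track how these identifications interact with the piecewise definition of $\alpha$ and with the chart changes from $\phi_1$ to $\phi_2$. Conceptually, however, nothing deeper is going on: the whole dynamics of $\alpha$ reduces, on fibers and up to a bounded correction from the trivializations, to the single automorphism $\gamma$ and its powers.
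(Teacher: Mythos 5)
Your proposal is correct and follows essentially the same route as the paper: the paper's proof simply observes that for the atlas of Lemma~\ref{lem_pullback_mapping_torus} the set $\{\alpha(\mathcal{A})_{m,x}^{i,j}\}$ is contained in $\{\gamma^n \colon n \in \mathbb{Z}\}$ and then invokes Theorem~\ref{thm_for_application}. Your additional bookkeeping (tracking crossings of the cut point and the $\gamma^{\pm1}$ corrections from the charts) just fills in the detail the paper leaves implicit.
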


\begin{proof}
	Using the atlas $\mathcal{A}$ given
	in Lemma~\ref{lem_pullback_mapping_torus},
	the corresponding set $\{\alpha(\mathcal{A})_{m,x}^{i,j}\}$ is 
	a subset of $\{\gamma^n \colon n \in \mathbb{Z}\}$.
	Then apply Theorem~\ref{thm_for_application}.
\end{proof}

Here are some concrete examples.

\begin{example}\label{example_1}
	Take $D$ to be the AF algebra $\varinjlim_n D_n$,
	with $D_n = M_{3^n} \oplus M_{3^n}$ for $n \in \mathbb{Z}_{\geq 0}$,
	and with the $*$-homomorphism $D_{n} \to D_{n+1}$ being	\[
		a \oplus b \mapsto \mathrm{diag}(a,a,b) \oplus \mathrm{diag}(b,b,a)
	\]
	for $a, b \in M_{3^n}$.
	Then $D$ is a simple unital AF algebras.
	Define $\gamma_n : D_n \to D_n$ by $\gamma_n(a,b) = (b,a)$
	for $a, b \in M_{3^n}$.
	Then the direct limit $*$-automorphism $\gamma = \varinjlim_n \gamma_n$
	exists by \cite[Thm.~1.10.16]{Lin2001Intro}.
	Let $A = \begin{pmatrix}
		2 & 1 \\ 1 & 2
	\end{pmatrix}$.
	Then $K_0(D) = \bigcup_{n=0}^{\infty} A^{-n}
	(\mathbb{Z} \oplus \mathbb{Z})$.
	A computation shows that $K_0(\gamma)$ is nontrivial on $K_0(D)$,
	so $M_{\gamma}$ is a nontrivial $C(S^1)$-algebra
	by Lemma~\ref{lem_homotopy_and_trivial_bundle}.
	Then $C^*(\mathbb{Z}, M_{\gamma}, \alpha)$ is simple and
	$C^*(\mathbb{Z}, M_{\gamma}, \alpha)_y$
	is a large subalgebra of crossed product type
	in $C^*(\mathbb{Z}, M_{\gamma}, \alpha)$ by our results.
\end{example}

\begin{example}\label{example_2}
	Take $D$ to be the reduced group C*-algebra $C^*_r(F_2)$
	of the free group on two generators.
	Then $D$ is a simple unital C*-algebras with unique tracial state
	(see \cite{Power1975simplicity}).
	Let $\gamma \in \Aut(D)$ exchange the two generators of $F_2$.
	Then $\gamma$ is not homotopic to $\mathrm{id}_D$
	because $\gamma$ is nontrivial on $K_1(D)$.
	Note that $\{\gamma^{n} \colon n \in \mathbb{Z}\}$ is pseudoperiodic
	since $\gamma^2 = \mathrm{id}$.
	Then $C^*(\mathbb{Z}, M_{\gamma}, \alpha)$ is simple and
	$C^*(\mathbb{Z}, M_{\gamma}, \alpha)_y$
	is a large subalgebra of crossed product type
	in $C^*(\mathbb{Z}, M_{\gamma}, \alpha)$.
\end{example}

\begin{proposition}\label{prop_Banach_space_decomposition}
	Let $X$ be an infinite compact Hausdorff space,
	and let $A$ be a locally trivial unital $C(X)$-algebra.
	Suppose that $h \colon X \to X$ is a minimal homeomorphism and
	$\alpha \in \Aut(A)$ lies over $h$.
	Let $Y \subset X$ be a closed subset with nonempty interior.
	Set \[
	Z_n = \begin{cases}
		\bigcup_{i=0}^{n-1} h^{i}(Y) \quad & n > 0 \\
		\varnothing  \quad & n = 0 \\
		\bigcup_{i=1}^{n} h^{-i}(Y) \quad & n < 0 .
	\end{cases}
	\]
	Then there is $N \in \mathbb{Z}_{>0}$ such that
	$C^*(\mathbb{Z}, A, \alpha)_{Y}$ has the Banach space
	direct sum decomposition \[
		C^* (\mathbb{Z}, A, \alpha)_{Y}
		= \bigoplus_{n=-N}^N C_0(X \setminus Z_n) A u^n \,.
	\]
\end{proposition}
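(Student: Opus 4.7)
The plan is to apply Corollary~\ref{corollary_of_piece_1} to the ideal $J = C_0(X \setminus Y)A$, after identifying the ideals $J_n$ of Convention~\ref{convention_coefficient_ideal} with $C_0(X \setminus Z_n)A$ and producing $N > 0$ with $J_N = J_{-N} = 0$. Since $\alpha$ lies over $h$, one has $\alpha^i(\eta(f)) = \eta(f \circ h^{-i})$ for $f \in C(X)$, and $f \circ h^{-i}$ vanishes on $h^i(Y)$ iff $f$ vanishes on $Y$; hence $\alpha^i(J) = C_0(X \setminus h^i(Y))A$ for every $i \in \mathbb{Z}$. Because for each closed $F \subset X$ the ideal $C_0(X \setminus F)A$ coincides with $\ker(\pi_F)$, intersecting kernels gives
\[
\bigcap_{i=0}^{n-1} C_0(X \setminus h^i(Y))A = C_0(X \setminus Z_n)A
\]
for $n > 0$, and analogously for $n < 0$. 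Thus $J_n = C_0(X \setminus Z_n)A$ for all $n \in \mathbb{Z}$.

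Next I will show that $Z_N = Z_{-N} = X$ for some $N > 0$. It is standard that in a compact minimal system every forward orbit is dense: if $F$ denotes the closure of the forward orbit of any $x \in X$, then $h(F) \subset F$, and the nested intersection $K = \bigcap_{n \geq 0} h^n(F)$ is a non-empty closed set satisfying $h(K) = K$, so by minimality $K = X$, forcing $F = X$. Applied to $h^{-1}$ this gives density of backward orbits, so using that $\mathrm{int}(Y)$ is non-empty, for every $x \in X$ there exists $n \geq 0$ with $h^{-n}(x) \in \mathrm{int}(Y)$, i.e., $x \in h^n(\mathrm{int}(Y))$. Hence $\{h^n(\mathrm{int}(Y))\}_{n \geq 0}$ is an open cover of $X$, and compactness extracts a finite subcover, yielding $N_+ > 0$ with $\bigcup_{i=0}^{N_+-1} h^i(\mathrm{int}(Y)) = X$, and therefore $Z_{N_+} = X$. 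The symmetric argument applied to $h^{-1}$ produces $N_- > 0$ with $Z_{-N_-} = X$; setting $N = \max(N_+, N_-)$ gives $Z_N = Z_{-N} = X$.

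With this $N$, the first step gives $J_N = J_{-N} = C_0(\varnothing)A = 0$, so Corollary~\ref{corollary_of_piece_1} applies and yields
\[
C^*(\mathbb{Z},A,\alpha)_J = \bigoplus_{k=-(N-1)}^{N-1} J_k u^k.
\]
Adjoining the vanishing summands at $k = \pm N$ gives the asserted form $\bigoplus_{n=-N}^{N} C_0(X \setminus Z_n) A u^n$. No step is a real obstacle; the only point requiring any care is the density of backward orbits in a compact minimal system, which ensures that finitely many forward iterates of the non-empty open set $\mathrm{int}(Y)$ already cover $X$.
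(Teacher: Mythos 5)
Your proof is correct and takes the same route the paper intends: the paper's proof simply invokes the second part of Corollary~\ref{corollary_of_piece_1} with $J = C_0(X \setminus Y)A$ (citing the analogous commutative argument in \cite{GKPT2018Crossed}), and you supply exactly the missing verifications, namely that $J_n = C_0(X\setminus Z_n)A$ and that minimality plus compactness forces $Z_N = X$, hence $J_N = 0$. No gaps.
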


\begin{proof}
	The proof is similar to the proof of
	Corollary 11.3.7 of \cite{GKPT2018Crossed}.
	In fact, this is the second part of Corollary~\ref{corollary_of_piece_1}
	with $J = C_0(X \setminus Y)A$.
\end{proof}

\begin{remark}\label{rek_direct_limit}
	Let $y \in X$ be arbitary.
	Suppose that $Y_1 \supset Y_2 \supset \cdots$ are closed subsets of $X$
	such that $\bigcap_{n=1}^{\infty} Y_n = \{y\}$.
	Then the algebras $C^*(\mathbb{Z}, A, \alpha)_{Y_n}$,
	for $n = 1, 2, \ldots$ form an increasing sequence such that \[
		\mathrm{clos} \bigg(
		\bigcup_{n=1}^{\infty}
			C^* \big(\mathbb{Z}, A, \alpha \big)_{Y_n}
		\bigg)
		= C^*(\mathbb{Z}, A, \alpha)_y .
	\]
	That is, $C^*(\mathbb{Z}, A, \alpha)_y
	= \varinjlim_{n} C^* \big(\mathbb{Z}, A, \alpha \big)_{Y_n}$.
\end{remark}

\begin{definition}\label{def_modified_Rokhlin_tower}
	Let $X$ be an infinite compact Hausdorff space and
	let $h\colon X \to X$ be a minimal homeomorphism.
	Let $Y \subset X$ be closed with nonempty interior.
	Denote $r_Y(x)$ to be the \emph{first return time} of $x$ to $Y$,
	i.e., $r_Y(x) = \min (\{n \geq 1 \colon h^n(x) \in Y\})$.
	The \emph{modified Rokhlin tower} associated with $Y$ consists of
	distinct numbers $n(0) < n(1) < \cdots < n(l)$,
	the closed subsets
	$Y_0, Y_1, \ldots, Y_{\ell}$
	with $Y_k = \mathrm{clos}\{x \in Y \colon r_Y(x) = n(k)\}$,
	and the open subsets
	$Y_0^{\bullet}, Y_1^{\bullet}, \ldots, Y_{\ell}^{\bullet}$ with
	$Y_k^{\bullet} = \mathrm{int}\{x \in Y \colon r_Y(x) = n(k)\}$,
	where $n(k)$, for each $k$ is the first return time to $Y$.
	See Definition~11.3.4 and Lemma 11.3.5 of \cite{GKPT2018Crossed}.
\end{definition}

\begin{lemma}\label{lem_shrink_cover}
	Let $X$ be a compact Hausdorff space.
	Let $(U_i)_{i \in I}$ be a finite open cover of $X$.
	Then there exists a subset $J \subset I$
	and an open cover $(V_j)_{j \in J}$ of $X$ such that \[
	\overline{V_{k}} \subset U_k
	\quad \mbox{\text{and}} \quad
	V_{k} \setminus \bigcup_{j \in J \setminus \{k\}} \overline{V_j}
	\ne \varnothing
	\]
	for all $k \in J$.
\end{lemma}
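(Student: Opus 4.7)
The plan is a two-step reduction: first prune $(U_i)_{i \in I}$ to an irredundant subcover that automatically furnishes witness points, then apply a strengthened version of the classical shrinking lemma that forces each witness to land in its shrunken open set.

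First I would use finiteness of $I$ to choose $J \subset I$ minimal with the property that $(U_j)_{j \in J}$ still covers $X$. By minimality, for each $k \in J$ the family $(U_j)_{j \in J \setminus \{k\}}$ fails to cover $X$, so there is a witness
\[
x_k \in X \setminus \bigcup_{j \in J \setminus \{k\}} U_j ,
\]
and necessarily $x_k \in U_k$. Next, I would shrink $(U_j)_{j \in J}$ to an open cover $(V_j)_{j \in J}$ with $\overline{V_j} \subset U_j$ and with the additional requirement $x_j \in V_j$ for every $j \in J$. Since a compact Hausdorff space is normal, enumerate $J = \{j_1, \dots, j_n\}$ and build the $V_{j_\ell}$ inductively. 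Having produced $V_{j_1}, \dots, V_{j_{\ell-1}}$ satisfying the above with the invariant that $V_{j_1}, \dots, V_{j_{\ell-1}}, U_{j_\ell}, \dots, U_{j_n}$ still covers $X$, set
\[
F_\ell = X \setminus \bigl( V_{j_1} \cup \cdots \cup V_{j_{\ell-1}} \cup U_{j_{\ell+1}} \cup \cdots \cup U_{j_n} \bigr) .
\]
Then $F_\ell \cup \{x_{j_\ell}\}$ is a closed subset of the open set $U_{j_\ell}$, so normality yields an open $V_{j_\ell}$ with $F_\ell \cup \{x_{j_\ell}\} \subset V_{j_\ell}$ and $\overline{V_{j_\ell}} \subset U_{j_\ell}$, preserving both the cover invariant and the witness requirement.

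Finally, I would verify the conclusion directly. The family $(V_j)_{j \in J}$ is an open cover of $X$ with $\overline{V_k} \subset U_k$ by construction. For any $k \in J$ and any $j \in J \setminus \{k\}$, minimality of $J$ gives $x_k \notin U_j$, and therefore $x_k \notin \overline{V_j}$ since $\overline{V_j} \subset U_j$. Together with $x_k \in V_k$ this shows
\[
x_k \in V_k \setminus \bigcup_{j \in J \setminus \{k\}} \overline{V_j} \ne \varnothing ,
\]
which is exactly what is required.

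The argument is entirely elementary, and I do not expect any conceptual obstacle. The only subtlety is one of sequencing: the witness points must be selected from the pruned cover \emph{before} any shrinking takes place, and the shrinking step must then be adapted (by throwing the singletons $\{x_{j_\ell}\}$ into the closed set being separated from $X \setminus U_{j_\ell}$) so that each witness survives inside its own shrunken piece. Once this order is respected, the closure hypothesis $\overline{V_j} \subset U_j$ does the rest of the work automatically.
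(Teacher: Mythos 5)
Your proof is correct and follows essentially the same route as the paper: pass to a minimal subcover and then shrink using normality. The only difference is that your inductive modification of the shrinking step (adding the singleton $\{x_{j_\ell}\}$ to the closed set) is unnecessary — since $(V_j)_{j\in J}$ is still a cover and $x_k\notin U_j\supset \overline{V_j}$ for all $j\ne k$, the witness $x_k$ automatically lands in $V_k$ for any standard shrinking, which is exactly the observation the paper uses.
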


\begin{proof}
	Let $J$ be a subset of $I$ such that $(U_j)_{j \in J}$
	is a minimal subcover of $(U_i)_{i \in I}$.
	Then
	$U_{k} \setminus \bigcup_{j \in J \setminus \{k\}} U_j \ne \varnothing$
	for every $k \in J$.
	Since every compact Hausdorff space is normal,
	one may choose an open cover $(V_j)_{j \in J}$ of $X$
	such that $\overline{V_j} \subset U_j$ for all $j \in J$.
	Then \[
		V_{k} \setminus \bigcup_{j \in J \setminus \{k\}} \overline{V_j}
		= X \setminus \bigcup_{j \in J \setminus \{k\}} \overline{V_j}
		\supset U_{k} \setminus \bigcup_{j \in J \setminus \{k\}} U_j
		\ne \varnothing \,.	\qedhere
	\] 
\end{proof}

Let $D$ be a simple unital $C^{\ast}$-algebra.
The class of recursive subhomogeneous algebras over $D$
has been defined in \cite[Def.~3.2, 3.3]{ABP2020structure}.
We recall some notions for convenience.

\begin{notation}\label{notation_RSH}
	Every recursive subhomogeneous algebra over $D$ can be written
	in the form: \[
	R \cong \Big[ \cdots \Big[ \big[C_0 \oplus_{\varphi_1, \rho_1} C_1 \big]
	\oplus_{\varphi_2, \rho_2} C_2 \Big]
	\oplus_{\varphi_{l}, \rho_{l}} C_{l},
	\]
	with $C_k = C(X_k, M_{n(k)}(D))$ for compact Hausdorff spaces $X_k$
	and positive integers $n(k)$, and with the restriction
	homomorphisms $\rho_k \colon C_k \to C_{k}^{(0)}$
	and unital $*$-homomorphisms $\varphi_k \colon C_{k-1} \to C_{k}^{(0)}$,
	where $C_k^{(0)} = C(X_k^{(0)}, M_{n(k)}(D))$
	for compact subsets $X_k^{(0)} \subset X_k$ (possibly empty).
	Associated with this decomposition, we define the following objects.
	\begin{enumerate}
		\item Its length $l$.
		\item The $k$-th stage algebra \[
		R^{(k)} = \Big[ \cdots \Big[ \big[C_0 \oplus_{\varphi_1, \rho_1} C_1
			\big] \oplus_{\varphi_2, \rho_2} C_2 \Big]
			\oplus_{\varphi_{l}, \rho_{k}} C_{k} \,. \]
		\item Its topological dimension $\max_{k} \dim(X_k)$.
		\item Its standard representation
		$\sigma\colon R \to \bigoplus_{k=0}^{l} C(X_k, M_{n(k)}(D))$,
		defined to be the obvious inclusion.
	\end{enumerate}
\end{notation}

Let $X$ be a compact Hausdorff space,
and let $D$ be a simple unital C*-algebra.
Let $A$ be a locally trivial $C(X)$-algebra with typical fiber $D$.
By repeated use of Lemma 2.4 of \cite{Dadarlat2009continuous},
one knows that $A$ is a recursive subhomogeneous algebra over $D$.
In particular, as showned in Lemma~\ref{lem_pullback_mapping_torus},
the mapping torus $M_{\gamma}$ is a recursive subhomogeneous algebra
over $D$.
Next we show that under appropriate conditions the orbit breaking subalgebra
$C^* (\mathbb{Z}, A, \alpha)_{Y}$ is also a recursive subhomogeneous algebra.
We follow the proof in \cite[Sect.~11.3]{GKPT2018Crossed}.
Similar results play a key role in \cite{LP2004direct},
\cite{Phillips2016Cast} and \cite{ABP2020structure}.

\begin{convention}\label{convention_for_RSH}
	Let $X$ be an infinite compact Hausdorff space,
	and let $D$ be a simple unital C*-algebra.
	Let $A$ be a locally trivial $C(X)$-algebra with typical fiber $D$.
	Let $h \colon X \to X$ be a minimal homeomorphism
	and let $\alpha \in \Aut(A)$ lie over $h$.
	Suppose that
	$\mathcal{A} = \{\phi_j \colon A(\overline{U_j}) \to C(\overline{U_j}, D)\}_{j}$
	is an atlas of $A$ such that
	$(U_j)_{j}$ is a finite open cover of $X$ and
	$U_{k} \setminus \bigcup_{j \ne k} \overline{U_{j}} \ne \varnothing$
	for all $k$.
	Let $Y \subset U_1 \setminus \bigcup_{i \ne 1} \overline{U_i}$ be a
	closed subset with nonempty interior.
	Let $Z_n$ be as in Proposition \ref{prop_Banach_space_decomposition}
	and let $\{n(k), Y_k, Y_k^{\bullet}\}_{k = 0}^{l}$ be the
	modified Rokhlin tower associated with $Y$ of $(X, h)$
	as in Definition~\ref{def_modified_Rokhlin_tower}.
	Denote by $\pi_1 \colon A \to A(\overline{U_1})$
	the quotient map and let $\bar{\phi}_1 = \phi_1 \circ \pi_1$.
	Write $\beta_{m,x}^{1,1} = \alpha(\mathcal{A})_{m,x}^{1,1}$
	for every $m \in \mathbb{Z}$ and every $x \in h^m(\overline{U_1}) \cap \overline{U_1}$.
\end{convention}

\begin{proposition}\label{prop_hom_induced_by_Rokhlin_tower}
	Adopt Convention~\ref{convention_for_RSH}.
	Define the shift unitary $s_k \in M_{n(k)} \subset C(Y_k, M_{n(k)})$ by \[
	s_k = \begin{pmatrix}
		0      & 0      & \cdots & \cdots & 0      & 0      & 1 \\
		1      & 0      & \cdots & \cdots & 0      & 0      & 0 \\
		0      & 1      & \cdots & \cdots & 0      & 0      & 0 \\
		\vdots & \vdots & \ddots &        & \vdots & \vdots & \vdots \\
		\vdots & \vdots &        & \ddots & \vdots & \vdots & \vdots \\
		0      & 0      & \cdots & \cdots & 1      & 0      & 0 \\
		0      & 0      & 0      & 0      & 0      & 1      & 0
	\end{pmatrix}
	% = \begin{pmatrix}	0 & 1 \\ 1_{n(k) - 1} & 0 \end{pmatrix}
	\,.\]
	For $k = 0, 1, \ldots, l$, there is a unique linear map \[
		\gamma_k \colon
		C^*(\mathbb{Z}, A, \alpha)_{Y} \to C(Y_k, M_{n(k)}(D))
	\]
	such that for $m = 0, 1,\ldots, n(k) - 1$ and $a \in C_0(X \setminus Z_m)A$
	we have:
	\begin{enumerate}
		\item $\gamma_k (a u^m) = \mathrm{diag}
		(\bar{\phi}_1(a)|_{Y_k}, \bar{\phi}_1(\alpha^{-1}(a))|_{Y_k}, \ldots,
			\bar{\phi}_1(\alpha^{-[n(k)-1]}(a))|_{Y_k}) \cdot s_k^m$.
		\item $\gamma_k (u^{-m}a) = s_k^{-m} \cdot
			\mathrm{diag}(
			\bar{\phi}_1(a)|_{Y_k}, \bar{\phi}_1(\alpha^{-1}(a))|_{Y_k}, \cdots,
			\bar{\phi}_1(\alpha^{-[n(k)-1]}(a))|_{Y_k})$.
	\end{enumerate}
	Moreover, the map \[
		\gamma \colon C^* (\mathbb{Z}, A, \alpha)_{Y}
		\to \bigoplus_{k=0}^{l} C(Y_k, M_{n(k)}(D))
	\]
	given by $\gamma(a) = (\gamma_0(a), \gamma_1(a), \ldots, \gamma_{l}(a))$
	is a $*$-homomorphism.
\end{proposition}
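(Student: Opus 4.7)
The plan is to use the finite Banach space decomposition
$C^{\ast}(\mathbb{Z}, A, \alpha)_{Y} = \bigoplus_{n = -N}^{N} J_n u^n$
from Proposition~\ref{prop_Banach_space_decomposition}
(where $J_n = C_0(X \setminus Z_n) A$) to define $\gamma_k$ on each strand
and then verify the $*$-homomorphism axioms on monomials.
On $J_m u^m$ with $0 \leq m < n(k)$ I take formula (1) as the definition;
on $J_{-m} u^{-m}$ with $0 < m < n(k)$, I rewrite
$b u^{-m} = u^{-m} \alpha^m(b)$, note $\alpha^m(J_{-m}) = J_m$,
and apply formula (2). For $|m| \geq n(k)$ I extend by zero.
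Because the decomposition is direct and finite,
and $\bar{\phi}_1$ is contractive,
this yields a well-defined bounded linear map $\gamma_k$ on all of
$C^{\ast}(\mathbb{Z}, A, \alpha)_Y$.

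The hypothesis $Y \subset U_1 \setminus \bigcup_{i \neq 1} \overline{U_i}$ ensures $Y_k \subset \overline{U_1}$, so the restriction of $\bar{\phi}_1(\alpha^{-j}(a))$ to $Y_k$ is genuinely well defined. The extension by zero is forced by the formula itself: for $a \in J_m$ with $m \geq n(k)$ and $x \in Y_k$, every $j \in \{0, \ldots, n(k) - 1\}$ satisfies $j < m$, so $h^j(x) \in h^j(Y) \subset Z_m$ gives $a(h^j(x)) = 0$, hence $\bar{\phi}_1(\alpha^{-j}(a))(x) = \phi_1|_x \circ \alpha_{-j, x}(a(h^j(x))) = 0$, and therefore the right-hand side of (1) vanishes at $x$. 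At $m = 0$, formulas (1) and (2) agree since $s_k^0 = 1$, so there is no inconsistency.

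For the $*$-property on a monomial $au^m$ with $a \in J_m$, $m \geq 0$, I write $(au^m)^{\ast} = u^{-m} a^{\ast}$ and apply formula (2), then compare with the adjoint of (1); the two match because $\bar{\phi}_1$ and $\alpha$ are $*$-preserving. For multiplicativity, using $(au^m)(bu^{m'}) = a \alpha^m(b) u^{m + m'}$ with $a \in J_m$, $b \in J_{m'}$, $a \alpha^m(b) \in J_{m + m'}$, and the identity $\alpha^{-j}(a \alpha^m(b)) = \alpha^{-j}(a) \cdot \alpha^{m - j}(b)$, a matrix computation shows that both $\gamma_k(a \alpha^m(b) u^{m + m'})$ and the product $\gamma_k(au^m) \gamma_k(bu^{m'})$ assign $\bar{\phi}_1(\alpha^{-p}(a))|_{Y_k} \cdot \bar{\phi}_1(\alpha^{m - p}(b))|_{Y_k}$ to the $(p, q)$-entry whenever $p - q \equiv m + m' \pmod{n(k)}$ and $0$ otherwise.

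The main obstacle is the bookkeeping when $m + m' \geq n(k)$: the cyclic shift $s_k^{m + m'}$ wraps past index $n(k)$, so a priori $\gamma_k(au^m) \gamma_k(bu^{m'})$ could contribute nonzero wrap-around entries while $\gamma_k(a \alpha^m(b) u^{m + m'}) = 0$ by the extended definition. One must verify that at every wrap-around index, either the row $p$ satisfies $p < m$ (forcing the $a$-factor to vanish because $a \in J_m$) or the paired $b$-subscript is less than $m'$, matching the ideal-theoretic fact that $a \alpha^m(b) \in J_{m + m'}$. Similar matrix computations handle the remaining sign combinations for $m$ and $m'$. Once all cases agree, $\gamma = \bigoplus_k \gamma_k$ is a $*$-homomorphism on the dense $*$-subalgebra of finite sums from the Banach decomposition, and hence on all of $C^{\ast}(\mathbb{Z}, A, \alpha)_Y$ by continuity.
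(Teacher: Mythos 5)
Your argument is correct and is essentially the argument the paper intends: the paper's proof simply defers to Lemma~11.3.9 of \cite{GKPT2018Crossed}, whose proof is exactly this strand-by-strand definition on the Banach space decomposition of Proposition~\ref{prop_Banach_space_decomposition} followed by verification of the $*$-homomorphism identities on monomials, with the wrap-around entries of $s_k^{m+m'}$ killed by the vanishing of $a \in C_0(X \setminus Z_m)A$ on $h^i(Y)$ for $i < m$. You correctly identify the one genuinely delicate point (the extension by zero for $|m| \geq n(k)$ being forced by the same vanishing), so there is nothing substantive to add.
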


\begin{proof}
	The proof is similar to the proof of Lemma 11.3.9 of \cite{GKPT2018Crossed}.
\end{proof}

\begin{lemma}
	Adopt Convention~\ref{convention_for_RSH}.
	For every $i,j \in \{1, \ldots, n(k)\}$,
	let $\delta_{i,j} = 1$ for $i = j$ and $\delta_{i,j} = 0$ for $i \ne j$.
	Define maps \[
		E_k^{(m)} \colon C(Y_k, M_{n(k)}(D)) \to C(Y_k, M_{n(k)}(D))
	\quad \text{by} \enspace \, \big[a_{i,j}\big]_{i,j = 1}^{n(k)} \mapsto
	\big[\delta_{i+m,j} a_{i,j}\big]_{i,j = 1}^{n(k)} \,.\]
	Set $G_m = \bigoplus_{k=0}^{l} E_k^{(m)} \big(C(Y_k, M_{n(k)}(D))\big)$.
	Then: \begin{enumerate}
		\item\label{prop_range_part_1}
		There is a Banach space direct sum decomposition \[
			\bigoplus_{k=0}^{l} C(Y_k, M_{n(k)}(D))
			= \bigoplus_{m = -n(l)}^{n(l)} G_m.
		\]
		\item For $k = 0, 1, \ldots, l$, $m \geq 0$,
		$a \in C_0(X \setminus Z_m)A$ and $x \in Y_k$,
		setting \[
		c_1 = \mathrm{diag} \big(
			\bar{\phi}_1(\alpha^{-m}(a))(x), \bar{\phi}_1(\alpha^{-(m+1)}(a))(x),
			\ldots, \bar{\phi}_1(\alpha^{-[n(k)-1]}(a))(x)		\big),
		\]
		the expression $\gamma_k(a u^m)(x)$ is given by the following matrix,
		in which the upper right corner is a zero matrix of size $m$: \[
		\gamma_k(a u^m)(x) = \begin{pmatrix} & 0 \\ c_1 &\end{pmatrix}.
		\]
		\item For $m \geq 0$ and $a \in C_0(X \setminus Z_m)A$, we have \begin{gather*}
			\gamma_k (au^m) \in E_{k}^{(m)} \big( C(Y_k, M_{n(k)}(D)) \big),
			\quad \gamma(au^m) \in G_m, \\
			\gamma_k(u^{-m}a) \in E_{k}^{(-m)} \big( C(Y_k, M_{n(k)}(D)) \big),
			\quad \text{and} \, \enspace \gamma(u^{-m}a) \in G_{-m}.
		\end{gather*}
		\item The $*$-homomorphism $\gamma$ is compatible with the direct sum
		decomposition of Proposition~\ref{prop_Banach_space_decomposition}
		on its domain and the direct sum decomposition of
		part \eqref{prop_range_part_1} on its codomain.
	\end{enumerate}
\end{lemma}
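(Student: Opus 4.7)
My plan is to verify the four parts in sequence, with part~(2) being the computational heart and parts~(3) and (4) following fairly directly.

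For part~(1), I would observe that $E_k^{(m)}$ acts pointwise on $C(Y_k, M_{n(k)}(D))$ as the linear projection of $M_{n(k)}(D)$ onto the subspace of matrices whose nonzero entries lie on a single off-diagonal (the locus $\{(i,j) : j = i + m\}$). The elementary fact that
\[
M_{n(k)} = \bigoplus_{|m| \leq n(k)-1} \bigl\{[a_{i,j}] : a_{i,j} = 0 \text{ unless } j = i + m\bigr\}
\]
extends fiberwise to $C(Y_k, M_{n(k)}(D))$ and then componentwise over $k$. For $|m| \geq n(k)$ the summand $E_k^{(m)}(C(Y_k, M_{n(k)}(D)))$ is zero, so enlarging the index range to $|m| \leq n(l)$ is harmless.

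For part~(2), the key input is Proposition~\ref{prop_hom_induced_by_Rokhlin_tower}(1), which gives
\[
\gamma_k(a u^m) = \mathrm{diag}\bigl(\bar{\phi}_1(a)|_{Y_k}, \bar{\phi}_1(\alpha^{-1}(a))|_{Y_k}, \ldots, \bar{\phi}_1(\alpha^{-(n(k)-1)}(a))|_{Y_k}\bigr) \cdot s_k^m \,.
\]
The essential observation is that for $a \in C_0(X \setminus Z_m)A$ and any $x \in Y_k \subset Y$, the first $m$ diagonal entries vanish. Indeed, $\alpha^{-j}(a)(x)$ corresponds under $\alpha_{-j, x}$ to $a(h^j x)$, and $h^j(Y_k) \subset h^j(Y) \subset Z_m$ for $j = 0, 1, \ldots, m-1$, while $a$ vanishes throughout $Z_m$. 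Thus the diagonal matrix has the form $\mathrm{diag}(0, \ldots, 0, d_{m+1}, \ldots, d_{n(k)})$, where $d_i = \bar{\phi}_1(\alpha^{-(i-1)}(a))|_{Y_k}$. A direct computation of this product with $s_k^m$ (whose $(i,j)$ entry is $1$ iff $j \equiv i - m \pmod{n(k)}$) produces nonzero entries only at positions $(i, i-m)$ for $i = m+1, \ldots, n(k)$; the wrap-around of $s_k^m$ which would have contributed to the upper-right block is annihilated precisely by the vanishing of $d_1, \ldots, d_m$. This yields the block form in the statement.

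Part~(3) is immediate from the description in part~(2): the support of $\gamma_k(a u^m)$ is a single off-diagonal matching the range of $E_k^{(m)}$, so $\gamma_k(a u^m) \in E_k^{(m)}(C(Y_k, M_{n(k)}(D)))$ and hence $\gamma(a u^m) \in G_m$. The statements for $u^{-m}a$ follow either by the parallel formula $\gamma_k(u^{-m}a) = s_k^{-m} \cdot \mathrm{diag}(\cdots)$ from Proposition~\ref{prop_hom_induced_by_Rokhlin_tower}(2) with the same vanishing argument, or by writing $u^{-m}a = (a^* u^m)^*$ and noting that taking adjoints interchanges $E_k^{(m)}$ and $E_k^{(-m)}$.

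Finally, part~(4) is then a formal assembly. Proposition~\ref{prop_Banach_space_decomposition} provides the Banach space decomposition $C^*(\mathbb{Z}, A, \alpha)_Y = \bigoplus_{m = -N}^{N} C_0(X \setminus Z_m) A u^m$ on the domain, and by part~(3) the map $\gamma$ sends the $m$-th summand into $G_m$, which is the $m$-th summand of the codomain decomposition established in part~(1). The main obstacle in executing this plan is purely bookkeeping: keeping straight the shift direction of $s_k^m$, the exact range of indices on which the $d_i$ vanish, and aligning these with the off-diagonal convention used to define $E_k^{(m)}$. Once the vanishing of the first $m$ diagonal entries is in place, the matrix multiplication and the rest of the argument are routine.
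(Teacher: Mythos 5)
Your argument is correct and is essentially the computation that the paper delegates to Corollary 11.3.16 of \cite{GKPT2018Crossed}: the vanishing of the first $m$ diagonal entries for $a \in C_0(X\setminus Z_m)A$ and $x\in Y_k\subset Y$ (since $h^{j}(x)\in Z_m$ for $0\le j\le m-1$), followed by the multiplication with $s_k^m$, is exactly the intended proof, and your treatment of parts (1), (3), (4) as formal consequences is what the cited source does. One bookkeeping point you should make explicit rather than wave at: with the lemma's literal definition $E_k^{(m)}\colon [a_{i,j}]\mapsto[\delta_{i+m,j}a_{i,j}]$ (support on $j=i+m$), the matrix you correctly compute in part (2) is supported on $j=i-m$, so either $E_k^{(m)}$ must be read as the projection onto $\{(i,j)\colon i=j+m\}$ or part (3) should land in $E_k^{(-m)}$ --- your proof establishes the intended statement once this sign convention is fixed, and the decomposition in part (1) and the compatibility in part (4) are unaffected by the relabeling.
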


\begin{proof}
	The proof is similar to the proof of Corollary 11.3.16 of \cite{GKPT2018Crossed}.
\end{proof}

\begin{corollary}
	Adopt Convention~\ref{convention_for_RSH}.
	The $*$-homomorphism $\gamma$ of 
	Proposition~\ref{prop_hom_induced_by_Rokhlin_tower} is injective.
\end{corollary}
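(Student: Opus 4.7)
The plan is to exploit the compatibility of $\gamma$ with the two Banach space direct sum decompositions at once: on the domain via Proposition~\ref{prop_Banach_space_decomposition} and on the codomain via part~(1) of the preceding lemma. Because $\gamma$ sends the summand $C_0(X \setminus Z_n) A u^n$ into the corresponding summand $G_n$, and both decompositions are direct, injectivity of $\gamma$ reduces to checking that each restriction $\gamma|_{C_0(X \setminus Z_n) A u^n}$ is injective.

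So fix $m \geq 0$ and suppose $a \in C_0(X \setminus Z_m) A$ satisfies $\gamma(a u^m) = 0$. For each $k$ and each $x \in Y_k$, the explicit matrix formula in part~(2) of the preceding lemma shows that $\gamma_k(a u^m)(x) = 0$ forces the lower-left diagonal block $c_1$ to vanish, namely $\bar{\phi}_1(\alpha^{-j}(a))(x) = 0$ for every $j = m, m+1, \ldots, n(k) - 1$. Since $\phi_1|_x$ is a $*$-isomorphism onto $D$ and since the fibrewise relation gives $\alpha^{-j}(a)(x) = \alpha_{-j, x}(a(h^j x))$, this is equivalent to $a(h^j x) = 0$ for $x \in Y_k$ and $j = m, \ldots, n(k) - 1$. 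The remaining range $0 \leq j < m$ is handled by the support condition: $a \in C_0(X \setminus Z_m) A$ together with $Y_k \subset Y$ forces $a(h^j x) = 0$ for $x \in Y_k$ and $0 \leq j < m$. Combining the two ranges, $a$ vanishes on the set $\bigcup_{k=0}^{l} \bigcup_{j=0}^{n(k)-1} h^j(Y_k^{\bullet})$, which by the construction of the modified Rokhlin tower (Definition~\ref{def_modified_Rokhlin_tower}) is an open dense subset of $X$. Since $A$ is locally trivial, hence a continuous $C(X)$-algebra by Lemma~\ref{Cast_bundle_lem}, the function $x \mapsto \|a(x)\|$ is continuous, so $a = 0$ and therefore $a u^m = 0$.

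The case $m < 0$ follows from the positive case by a $*$-argument: if $\gamma(a u^m) = 0$ then $\gamma((a u^m)^*) = \gamma(\alpha^{-m}(a^*) u^{-m}) = 0$, and since $-m > 0$ with $\alpha^{-m}(a^*) \in C_0(X \setminus Z_{-m}) A$, the positive case gives $\alpha^{-m}(a^*) u^{-m} = 0$, whence $a u^m = 0$. The main point requiring care is the density of the tower set $\bigcup_{k,j} h^j(Y_k^{\bullet})$ in $X$, which is a standard property of the modified Rokhlin partition, together with the passage from vanishing on a dense subset to vanishing everywhere; the latter uses that the norm function $x \mapsto \|a(x)\|$ is genuinely continuous (not merely upper semi-continuous) because local triviality of $A$ guarantees continuity of the $C(X)$-algebra. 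Both facts are ensured by the standing hypotheses of Convention~\ref{convention_for_RSH}.
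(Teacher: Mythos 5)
Your proof is correct and is essentially the argument the paper intends, which it delegates to the proof of Lemma~11.3.17 of \cite{GKPT2018Crossed}: use the compatibility of $\gamma$ with the two Banach space direct sum decompositions to reduce to a single summand $C_0(X\setminus Z_m)Au^m$, then read the vanishing of $a$ along the Rokhlin tower off the explicit matrix formula (with the adjoint trick handling $m<0$). The only inefficiency is at the end: since $\gamma_k(au^m)$ is a function on the \emph{closed} set $Y_k$ and the sets $h^j(Y_k)$ for $0\le j\le n(k)-1$ and $0\le k\le l$ already cover $X$, you obtain $a(x)=0$ for every $x\in X$ directly, so the detour through density of $\bigcup_{k,j}h^j(Y_k^{\bullet})$ and continuity of $x\mapsto\|a(x)\|$ can be dropped.
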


\begin{proof}
	The proof is similar to the proof of Lemma 11.3.17 of \cite{GKPT2018Crossed}.
\end{proof}

\begin{lemma}
	Adopt Convention~\ref{convention_for_RSH}.
	Let \[
		b = (b_0, b_1, \ldots, b_{l})
		\in \bigoplus_{k=0}^{l} C(Y_k, M_{n(k)}(D)) .
	\]
	Then $b \in \gamma \big( C^{\ast}(\mathbb{Z}, A, \alpha)_{Y} \big)$
	if and only if whenever \begin{itemize}
		\item $r \in \mathbb{Z}_{>0}$,
		\item $k, t_1, \cdots, t_r \in \{0, \cdots, l\}$,
		\item $n(t_1) + n(t_2) + \cdots + n(t_r) = n(k)$,
		\item $x \in (Y_k \setminus Y_k^{\bullet}) \cap Y_{t_1}
			\cap h^{-n(t_1)}(Y_{t_2}) \cap \cdots \cap
			h^{-[n(t_1) + \cdots + n(t_{r-1})]}(Y_{t_r})$,
	\end{itemize}
	then $b_k(x)$ is given by the block diagonal matrix \begin{align*}
		b_{k}(x) & = \mathrm{diag} \, \Big(
			b_{t_1}(x), \,
			\beta_{-n(t_1),x}^{1,1}\big[b_{t_2}(h^{n(t_1)}x)\big], \\
			& \qquad \qquad \ldots, \,
			\beta_{-[n(t_1) + \cdots + n(t_{r-1})], x}^{1,1} \big[
				(b_{t_r}(h^{n(t_1) + \cdots + n(t_{r-1})}x)) \big]
		\Big) \,.
	\end{align*}
\end{lemma}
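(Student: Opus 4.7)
The plan is to prove both directions by combining the explicit formulas of Proposition~\ref{prop_hom_induced_by_Rokhlin_tower} with the Banach-space decomposition $\bigoplus_m G_m$ of the previous lemma, following the scalar-coefficient argument of~\cite[Lem.~11.3.18]{GKPT2018Crossed}.

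For necessity, suppose $b = \gamma(a)$. Since $\gamma$ is a continuous $*$-homomorphism and $C_c(\mathbb{Z}, A, \alpha) \cap C^{\ast}(\mathbb{Z}, A, \alpha)_{Y}$ is dense in $C^{\ast}(\mathbb{Z}, A, \alpha)_{Y}$ by Proposition~\ref{piece_1}, I may reduce to the case $a = c u^m$ with $m \geq 0$ and $c \in C_0(X \setminus Z_m) A$ (the case $u^{-m} c$ being symmetric). Abbreviate $N_i = n(t_1) + \cdots + n(t_{i-1})$, so that the $i$-th diagonal block of $b_k(x)$ has size $n(t_i)$ and occupies rows and columns indexed by $[N_i, N_{i+1})$. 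By Proposition~\ref{prop_hom_induced_by_Rokhlin_tower}, the only nonzero entries of $\gamma_k(c u^m)(x)$ lie on the $m$-th subdiagonal, and the entry at position $(r, r-m)$ equals $\bar\phi_1(\alpha^{-r}(c))(x)$. A within-block entry (so $r, r-m \in [N_i, N_{i+1})$) rewrites via $\bar\phi_1(\alpha^{-r}(c))(x) = \beta_{-N_i, x}^{1,1}\bigl[\bar\phi_1(\alpha^{-(r-N_i)}(c))(h^{N_i}x)\bigr]$ as the transport by $\beta_{-N_i, x}^{1,1}$ of the corresponding entry of $\gamma_{t_i}(c u^m)(h^{N_i}x)$. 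For a cross-block entry at $(r, c)$ with $r = N_i + l_r$ in block $i$ and $c = r-m$ in an earlier block, one has $l_r < m$, so $h^r(x) = h^{l_r}(h^{N_i}x) \in h^{l_r}(Y) \subset Z_m$; hence $\bar\phi_1(\alpha^{-r}(c))(x) = 0$ since $c \in C_0(X \setminus Z_m) A$. This produces the claimed block-diagonal form.

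For sufficiency, given $b$ satisfying the compatibility condition, decompose $b = \sum_m b^{(m)}$ using $\bigoplus_m G_m$. For each $m \geq 0$ one constructs $a^{(m)} \in C_0(X \setminus Z_m) A$ with $\gamma(a^{(m)} u^m) = b^{(m)}$ by inverting the formula of Proposition~\ref{prop_hom_induced_by_Rokhlin_tower}: for $z \in X \setminus Z_m$ lying in a translate $h^{-j}(Y_k^{\bullet})$ with $j \in [m, n(k)-1]$, define $a^{(m)}(z)$ so that $\bar\phi_1(\alpha^{-j}(a^{(m)}))(h^{j}z)$ equals the appropriate $m$-th subdiagonal entry of $b_k^{(m)}(h^{j}z)$; this determines $a^{(m)}(z)$ after applying $(\phi_1|_{h^{j} z})^{-1}$ and then $\alpha_{j, h^{j}z}$. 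When $h^{j}(z)$ lies on a boundary $Y_k \setminus Y_k^{\bullet}$, several Rokhlin cells may contain $z$; the block-diagonal compatibility of $b$ is designed precisely so that the candidate values from different cells agree. Continuity of $a^{(m)}$ then follows from matching on boundaries, and the vanishing on $Z_m$ automatically places $a^{(m)}$ in $C_0(X \setminus Z_m) A$. Summing over $m$ (using $u^{-|m|} a^{(m)}$ for $m < 0$) yields a preimage of $b$ under $\gamma$.

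The main obstacle is the sufficiency direction, specifically verifying that the pointwise-defined $a^{(m)}$ is continuous across the Rokhlin-cell boundaries and that the definition is independent of the cell chosen. The block-diagonal compatibility hypothesis is engineered precisely so that these consistency checks succeed, but carefully tracking the local trivializations $\phi_1$ and the change-of-trivialization cocycle $\beta^{1,1}$ across cells requires some bookkeeping. This is the direct analog, in the locally-trivial $C(X)$-algebra setting, of the scalar-coefficient argument~\cite[Lem.~11.3.18]{GKPT2018Crossed}.
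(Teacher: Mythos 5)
Your proposal follows essentially the same route as the paper, whose proof is simply the adaptation of \cite[Lem.~11.3.18]{GKPT2018Crossed} to the bundle setting: necessity by reducing to generators $cu^{m}$ and reading off the banded matrix of Proposition~\ref{prop_hom_induced_by_Rokhlin_tower}, sufficiency by splitting $b$ along the subdiagonal decomposition $\bigoplus_m G_m$ and inverting the formula level by level on the Rokhlin tower, with the block-diagonal compatibility supplying agreement on the cell boundaries $Y_k \setminus Y_k^{\bullet}$. One slip to repair in the sufficiency step: since $\alpha^{-j}(a)(w)=\alpha_{-j,w}\bigl(a(h^{j}w)\bigr)$, the value $a^{(m)}(z)$ is read off from the entry $\bar{\phi}_1(\alpha^{-j}(a^{(m)}))(h^{-j}z)$ of $b^{(m)}_k(h^{-j}z)$ for $z\in h^{j}(Y_k^{\bullet})$, not from $b^{(m)}_k(h^{j}z)$ for $z\in h^{-j}(Y_k^{\bullet})$ as written (your prescription would pin down $a^{(m)}$ at $h^{2j}z$ rather than at $z$, and would also break the automatic vanishing on $Z_m$); with the translates oriented correctly the argument goes through as you describe.
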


\begin{proof}
	The proof is similar to the proof of Lemma 11.3.18 of \cite{GKPT2018Crossed}.
\end{proof}

\begin{theorem}\label{thm_RSH}
	Let $X$ be an infinite compact Hausdorff space,
	and let $D$ be a simple unital C*-algebra.
	Let $A$ be a locally trivial $C(X)$-algebra with typical fiber $D$.
	Let $h \colon X \to X$ be a minimal homeomorphism
	and let $\alpha \in \Aut(A)$ lie over $h$.
	Suppose that
	$\mathcal{A} = \{\phi_j \colon A(\overline{U_j}) \to C(\overline{U_j}, D)\}_{j}$
	is an atlas of $A$ such that
	$(U_j)_{j}$ is a finite open cover of $X$ and
	$U_{k} \setminus \bigcup_{j \ne k} \overline{U_{j}} \ne \varnothing$
	for all $k$.
	Let $Y \subset U_1 \setminus \bigcup_{i \ne 1} \overline{U_i}$ be a
	closed subset with nonempty interior.
	Let $Z_n$ be as in Proposition \ref{prop_Banach_space_decomposition}
	and let $\{n(k), Y_k, Y_k^{\bullet}\}_{k = 0}^{l}$ be the
	modified Rokhlin tower associated with $Y$ of $(X, h)$
	as in Definition~\ref{def_modified_Rokhlin_tower}.
	Write $\beta_{m,x}^{1,1} = \alpha(\mathcal{A})_{m,x}^{1,1}$
	for every $m \in \mathbb{Z}$ and every $x \in h^m(\overline{U_1}) \cap \overline{U_1}$.
	Then the $*$-homomorphism \[
	\gamma \colon C^* (\mathbb{Z}, A, \alpha)_{Y}
		\to \bigoplus_{k=0}^{l} C(Y_k, M_{n(k)}(D))
	\]
	of Proposition~\ref{prop_hom_induced_by_Rokhlin_tower} induces a
	$*$-isomorphism of $C^* (\mathbb{Z}, A, \alpha)_{Y}$ with the
	recursive subhomogeneous algebra over $D$ defined,
	following Notation~\ref{notation_RSH},
	as follows:
	\begin{enumerate}
	\item $l$ and $n(0), n(1), \ldots, n(l)$ are as
		in Definition~\ref{def_modified_Rokhlin_tower}.
	\item $X_k = Y_k$ for $0 \leq k \leq l$.
	\item $X_k^{(0)} = Y_k \cap \bigcup_{j=0}^{k-1} Y_j$.
	\item For $x \in X_k^{(0)}$ and
		$b = (b_0, \ldots, b_{k-1})$ in the image in
		$\bigoplus_{j=0}^{k-1} C(X_{j}, M_{n(j)}(D))$ of the
		$k-1$ stage algebra $C^* (\mathbb{Z}, A, \alpha)_{Y}^{(k-1)}$,
		whenever
		\[x \in (Y_k \setminus Y_k^{\bullet}) \cap Y_{t_1}
			\cap h^{-n(t_1)}(Y_{t_2})
			\cap \cdots \cap h^{-[n(t_1) + \cdots + n(t_{r-1})]}(Y_{t_r})\]
		with $n(t_1) + n(t_2) + \cdots + n(t_r) = n(k)$,
		then the unital $*$-homomorphism
		$\varphi_k \colon C^* (\mathbb{Z}, A, \alpha)_{Y}^{(k-1)}
		\to C(X_{k}^{(0)}, M_{n(k)}(D))$
		satisfies the equation \begin{align*}
			\varphi_k (b_0, b_1, \ldots, b_{k-1})(x)
			& = \mathrm{diag} \, \Big(
				b_{t_1}(x), \,
				\beta_{-n(t_1),x}^{1,1}\big[b_{t_2}(h^{n(t_1)}x)\big], \,
				\ldots \\
				& \hspace*{5em}
				\beta_{-[n(t_1) + \cdots + n(t_{r-1})], x}^{1,1} \big[
				(b_{t_r}(h^{n(t_1) + \cdots + n(t_{r-1})}x)) \big]
				\Big) \,.
			\end{align*}
		\item $\rho_k \colon C(X_k , M_{n(k)}(D)) 
			\to C(X_k^{(0)}, M_{n(k)}(D))$ is the restriction map.
	\end{enumerate}
	The orbit breaking subalgebra $C^*(\mathbb{Z}, A, \alpha)_{Y}$
	has the following recursive decomposition of length $l$:
	\begin{align*}
	\Big[ \cdots \Big[ \big[ &
	C(X_{0},\mathrm{M}_{n(0)}(D)) \oplus_{\varphi_1 , \rho_1}
	C(X_{1}, \mathrm{M}_{n(1)}(D)) \big] \\
	& \hspace*{4em} \oplus_{\varphi_{2}, \rho_{2}} C(X_{2}, \mathrm{M}_{n(2)}(D))
	\Big] \cdots \Big]
	\oplus_{\varphi_{l}, \rho_{l}} C(X_{l}, \mathrm{M}_{n(l)}(D)) \,,
	\end{align*}
	and the topological dimension of this decomposition is $\dim(X)$.
\end{theorem}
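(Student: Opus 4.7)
The plan is to combine the three preparatory results stated immediately before the theorem (Proposition~\ref{prop_hom_induced_by_Rokhlin_tower} constructing $\gamma$, the corollary establishing its injectivity, and the lemma characterizing the image of $\gamma$ by block-diagonal compatibility) with an inductive pullback description of the target algebra. The strategy closely follows Section~11.3 of \cite{GKPT2018Crossed}, which treats the scalar-coefficient case; the extra work here is to carry along the twisting $\beta^{1,1}_{-m,x}$ that appears because, when $D$ is noncommutative, $\alpha$ no longer amounts to multiplication by a scalar-valued function.

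Starting from the injection $\gamma$, I would prove by induction on $k \in \{0,1,\ldots,l\}$ that the image $R^{(k)} \subset \bigoplus_{j=0}^{k} C(Y_j, M_{n(j)}(D))$ of the truncation of $\gamma$ to the first $k+1$ coordinates coincides with the $k$-th stage algebra built from items (1)--(5) of the statement. The base case $k=0$ is immediate since $X_0^{(0)} = \varnothing$ forces $R^{(0)} = C(Y_0, M_{n(0)}(D))$. For the inductive step, the characterization lemma says that a tuple $(b_0,\ldots,b_k)$ lies in $R^{(k)}$ if and only if $(b_0,\ldots,b_{k-1}) \in R^{(k-1)}$ and, for every $x$ in the boundary overlap $(Y_k \setminus Y_k^{\bullet}) \cap Y_{t_1} \cap h^{-n(t_1)}(Y_{t_2}) \cap \cdots$ with $n(t_1)+\cdots+n(t_r)=n(k)$, the value $b_k(x)$ is the prescribed block-diagonal matrix in the $\beta^{1,1}_{-m,x}$'s. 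Reading this set of equations as $\rho_k(b_k) = \varphi_k(b_0,\ldots,b_{k-1})$ with the $\varphi_k$ and $\rho_k$ from items (4) and (5) identifies $R^{(k)}$ with the pullback, completing the inductive step. At $k=l$ the image $R^{(l)}$ is precisely the RSH algebra in the statement, so the isomorphism is $\gamma$ itself.

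A few technical items deserve attention. First, $\varphi_k$ must be verified to be a well-defined unital continuous $*$-homomorphism into $C(X_k^{(0)}, M_{n(k)}(D))$; this draws on the continuity of $x \mapsto \beta^{1,1}_{-m,x}(d)$ established earlier in the $C(X)$-algebra section, together with the unique stacked decomposition of a boundary point $x \in X_k^{(0)}$ provided by the modified Rokhlin tower. One also needs to confirm that if the same $x$ admits more than one such decomposition then all of them yield the same block-diagonal value; this forced consistency is already built into $R^{(k-1)}$ by the inductive hypothesis, so no new check is needed.

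For the topological dimension statement, the inequality $\max_k \dim(Y_k) \leq \dim(X)$ is immediate because each $Y_k$ is a closed subset of $X$, and the reverse inequality follows from the fact that $Y$ has nonempty interior in $X$, so $\dim(Y) = \dim(X)$, combined with the finite sum theorem for covering dimension applied to $Y = \bigcup_{k=0}^{l} Y_k$. The principal obstacle in the whole argument is the translation between the block-diagonal formula of the characterization lemma and the pullback map $\varphi_k$: the noncommutative twisting by $\beta^{1,1}_{-m,x}$ does not introduce new conceptual difficulties, but it requires careful bookkeeping of how the chosen local trivialization $\phi_1$ interacts with iteration of $\alpha$ along the Rokhlin tower.
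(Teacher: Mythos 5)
Your proposal is correct and follows essentially the same route as the paper, which itself proves this theorem by adapting the argument of Theorem 11.3.19 of \cite{GKPT2018Crossed}: injectivity of $\gamma$ plus the block-diagonal characterization of its image, assembled by induction on the stages into the pullback description of the recursive subhomogeneous algebra. The only point worth flagging is that the dimension claim $\dim(Y)=\dim(X)$ uses minimality of $h$ (finitely many iterates of $\mathrm{int}(Y)$ cover $X$, then the sum theorem), not merely that $Y$ has nonempty interior, but this is a standard step and does not affect the argument.
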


\begin{proof}
	The proof is similar to the proof of Theorem 11.3.19 of \cite{GKPT2018Crossed}.
\end{proof}

\begin{proposition}\label{prop_RSH_structure}
	Let $X$ be an infinite compact metric space,
	and let $D$ be a simple separable unital C*-algebra.
	Let $A$ be a locally trivial $C(X)$-algebra with typical fiber $D$.
	Suppose that $h \colon X \to X$ is a minimal homeomorphism
	and $\alpha \in \Aut(A)$ lies over $h$.
	Let $y \in X$ be arbitary.
	Then $C^*(\mathbb{Z}, A, \alpha)_{y}$ is a direct limit of separable
	recursive subhomogeneous algebras over $D$ of the form
	$C^* (\mathbb{Z}, A, \alpha)_{Y_n}$	for closed subsets
	$Y_n \subset X$ with $\mathrm{int}(Y_n) \ne \varnothing$.
\end{proposition}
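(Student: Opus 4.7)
The plan is to combine Remark~\ref{rek_direct_limit} with Theorem~\ref{thm_RSH}: the former identifies $C^{\ast}(\mathbb{Z}, A, \alpha)_y$ as a direct limit of $C^{\ast}(\mathbb{Z}, A, \alpha)_{Y_n}$ for any decreasing sequence of closed neighborhoods $Y_n$ of $y$ with $\bigcap_n Y_n = \{y\}$, while the latter delivers the recursive subhomogeneous structure on each individual $C^{\ast}(\mathbb{Z}, A, \alpha)_{Y_n}$ once a suitable atlas is fixed. The main technical task is therefore to produce an atlas $\mathcal{A} = \big\{\phi_j \colon A(\overline{U_j}) \to C(\overline{U_j}, D)\big\}_{j=1}^{N}$ of $A$ that satisfies the hypotheses of Convention~\ref{convention_for_RSH}, together with the extra requirement that $y \in U_1$ and $y \notin \overline{U_j}$ for every $j \neq 1$, so that arbitrarily small closed neighborhoods of $y$ fit inside $U_1 \setminus \bigcup_{j \neq 1} \overline{U_j}$.

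To construct such an atlas, I would start from any atlas of $A$, take the open cover formed by the interiors of its local trivializing sets, pass to a minimal subcover (relabeling so that $y$ lies in the distinguished set $U_1$), and then apply Lemma~\ref{lem_shrink_cover} to obtain an open cover $(V_j)$ with $\overline{V_j}$ contained in a local trivialization and $V_k \setminus \bigcup_{j \neq k} \overline{V_j} \ne \varnothing$ for every $k$. In general $y$ could still lie in $\overline{V_j}$ for several $j \ne 1$, so an additional local adjustment is needed. Since $h$ is minimal on an infinite compact Hausdorff space, $X$ has no periodic points and no isolated points, and each nonempty open set $V_k \setminus \bigcup_{j \neq k} \overline{V_j}$ contains a point $x_k \neq y$; choosing $\epsilon > 0$ smaller than $\min_{k \neq 1} d(x_k, y)$ and small enough that $\overline{B_{2\epsilon}(y)} \subset V_1$, I replace each $V_j$ with $j \neq 1$ by $V_j \setminus \overline{B_{\epsilon/2}(y)}$ and leave $V_1$ unchanged. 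A direct verification shows that the modified family still covers $X$, still satisfies $\overline{V_j} \subset U_j$ and the non-redundancy condition (witnessed by the same $x_k$), and now also forces $y \notin \overline{V_j}$ for all $j \neq 1$.

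With this atlas fixed, I would choose $Y_n$ to be a decreasing sequence of closed balls around $y$ of radii tending to $0$, all contained in the open neighborhood $V_1 \setminus \bigcup_{j \neq 1} \overline{V_j}$ of $y$. Each $Y_n$ has nonempty interior, because it contains the corresponding open ball and $X$ has no isolated points, while $\bigcap_n Y_n = \{y\}$ by metrizability. Theorem~\ref{thm_RSH} applied with $Y = Y_n$ realizes each $C^{\ast}(\mathbb{Z}, A, \alpha)_{Y_n}$ as a recursive subhomogeneous algebra over $D$; separability is automatic, since the RSH stages are of the form $C(X_k^{(n)}, M_{n(k)}(D))$ with compact metric base spaces $X_k^{(n)} \subset X$ and separable fiber $D$. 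Remark~\ref{rek_direct_limit} then yields $C^{\ast}(\mathbb{Z}, A, \alpha)_y = \varinjlim_n C^{\ast}(\mathbb{Z}, A, \alpha)_{Y_n}$, as required. The only substantive obstacle is the atlas adjustment in the previous paragraph; once that is in place, the proposition falls out as an essentially formal combination of the two ingredients.
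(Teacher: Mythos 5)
Your argument is correct, but at the one nontrivial step it takes a genuinely different route from the paper. Both proofs reduce the proposition to the combination of Theorem~\ref{thm_RSH} and Remark~\ref{rek_direct_limit}, and both must arrange for arbitrarily small closed neighborhoods to sit inside $U_1 \setminus \bigcup_{j \neq 1} \overline{U_j}$ for a suitable atlas. You achieve this by surgery on the cover: after shrinking via Lemma~\ref{lem_shrink_cover} you delete a small closed ball about $y$ from every $V_j$ with $j \neq 1$, and your verification that the modified family is still an atlas satisfying the non-redundancy condition is sound (the witnesses $x_k$ survive because $\epsilon < \min_{k \neq 1} d(x_k, y)$, and the deleted ball is absorbed by $V_1$). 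The paper instead leaves the atlas untouched and moves the point: by minimality some $h^m(y)$ lies in $U_1 \setminus \bigcup_{j \geq 2} \overline{U_j}$, and the $*$-automorphism $\sum_k a_k u^k \mapsto \sum_k \alpha^m(a_k) u^k$ of $C^*(\mathbb{Z}, A, \alpha)$ carries $C^*(\mathbb{Z}, A, \alpha)_y$ onto $C^*(\mathbb{Z}, A, \alpha)_{h^m(y)}$ by Proposition~\ref{piece_1}, so it suffices to treat $h^m(y)$. Your version avoids that crossed-product isomorphism and has the minor advantage of producing $Y_n$ that actually shrink to $y$ itself rather than to a point in its orbit, so the direct limit is an honest increasing union inside $C^*(\mathbb{Z}, A, \alpha)_y$ rather than an identification up to isomorphism; the price is the cover adjustment, which you carry out correctly. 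One cosmetic slip: the relabeling that puts $y$ into the distinguished set should be done after applying Lemma~\ref{lem_shrink_cover}, not before, since the shrinking need not keep $y$ in the originally chosen $V_1$; this does not affect the argument.
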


\begin{proof}
	Suppose that $\{\phi_j \colon A(\overline{U_j})
	\to C(\overline{U_j}, D)\}_{j=1}^n$	is a collection of
	local trivializations of $A$ such that the sets $(U_j)_{j=1}^n$ form
	an open cover of $X$. Using Lemma~\ref{lem_shrink_cover},
	one may assume that
	$U_1 \setminus \bigcup_{j = 2}^n \overline{U_j} \ne \varnothing$.
	By minimality of $h$, there exists $m \in \mathbb{Z}$ such that
	$h^m(y) \in U_1 \setminus \bigcup_{j = 2}^n \overline{U_j}$.
	Then choose a decreasing sequence $(Y_k)_{k = 1}^{\infty}$
	of closed subsets of $U_1 \setminus \bigcup_{j = 2}^n \overline{U_j}$
	with $\mathrm{int}(Y_k) \ne \varnothing$ such that
	$\bigcap_k Y_k = \{h^m(y)\}$.
	Let $\varphi \colon C^* (\mathbb{Z}, A, \alpha)
	\to C^* (\mathbb{Z}, A, \alpha)$ be the unique $*$-homomorphism
	such that \[
	\varphi\left( \sum_{k = -n}^{n} a_k u^k \right)
	= \sum_{k = -n}^{n} \alpha^m(a_k) u^k \quad \text{for} \enspace
	n \in \mathbb{Z}_{\geq 0} , \, a_{-n}, \ldots, a_{n} \in A.
	\]
	It is standard to check that $\varphi$ is a $*$-isomorphism,
	and that \[\varphi\big(C^* (\mathbb{Z}, A, \alpha)_{y}\big)
	= C^* (\mathbb{Z}, A, \alpha)_{h^m(y)}\] by Proposition~\ref{piece_1}.
	Thus $C^* (\mathbb{Z}, A, \alpha)_{y}
	\cong C^* (\mathbb{Z}, A, \alpha)_{h^m(y)}$ is a direct limit of
	recursive subhomogeneous algebras over $D$ by Theorem~\ref{thm_RSH}
	and Remark~\ref{rek_direct_limit}.
\end{proof}

\begin{proposition}
	Let $X$ be an infinite compact metric space,
	and let $D$ be a simple separable unital C*-algebra.
	Let $\mathcal{E}$ be a separable unital strongly selfabsorbing C*-algebra.
	Let $A$ be a locally trivial $C(X)$-algebra with typical fiber $D$.
	Suppose that $h \colon X \to X$ is a minimal homeomorphism
	and $\alpha \in \Aut(A)$ lies over $h$.
	If $D$ is $\mathcal{E}$-stable,
	then for any $y \in X$,	the orbit breaking subalgebra
	$C^*(\mathbb{Z}, A, \alpha)_{y}$ is $\mathcal{E}$-stable.
\end{proposition}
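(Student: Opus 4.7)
The plan is to combine the recursive subhomogeneous approximation of Proposition~\ref{prop_RSH_structure} with two standard permanence properties of strongly self-absorbing C*-algebras: stability under inductive limits of separable C*-algebras and stability under extensions.

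First, Proposition~\ref{prop_RSH_structure} expresses $C^{\ast}(\mathbb{Z}, A, \alpha)_y$ as an inductive limit $\varinjlim_n R_n$ of separable recursive subhomogeneous algebras $R_n = C^{\ast}(\mathbb{Z}, A, \alpha)_{Y_n}$ over $D$. Because tensoring with $\mathcal{E}$ commutes with inductive limits, the limit is $\mathcal{E}$-stable as soon as each $R_n$ is, so the problem reduces to verifying $\mathcal{E}$-stability of a fixed $R_n$.

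Now fix $n$ and use the recursive decomposition of $R_n$ of length $\ell$ provided by Theorem~\ref{thm_RSH}, whose layers are $C_k = C(Y_k, M_{n(k)}(D))$. I would induct on $k$ to show that the stage algebra $R_n^{(k)}$ is $\mathcal{E}$-stable. The base case is $R_n^{(0)} = C(Y_0, M_{n(0)}(D))$, which is $\mathcal{E}$-stable because $D \otimes \mathcal{E} \cong D$ forces
\[
C(Y_0, M_{n(0)}(D)) \otimes \mathcal{E}
\cong C(Y_0) \otimes M_{n(0)} \otimes D \otimes \mathcal{E}
\cong C(Y_0, M_{n(0)}(D)).
\]
For the inductive step, the pullback description $R_n^{(k)} = R_n^{(k-1)} \oplus_{\varphi_k, \rho_k} C_k$ yields a short exact sequence
\[
0 \to C_0(Y_k \setminus Y_k^{(0)}, M_{n(k)}(D)) \to R_n^{(k)} \to R_n^{(k-1)} \to 0,
\]
whose ideal is $\mathcal{E}$-stable by the same tensor calculation and whose quotient is $\mathcal{E}$-stable by the inductive hypothesis. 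Applying the theorem of Toms and Winter that an extension of separable $\mathcal{E}$-stable C*-algebras by an $\mathcal{E}$-stable ideal remains $\mathcal{E}$-stable closes the induction.

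The main obstacle is the inductive step, which depends essentially on the nontrivial extension permanence theorem for strongly self-absorbing C*-algebras; the remaining ingredients are routine tensor product manipulations and direct applications of the structure theorems already proved in this section. An alternative route would try to view each $R_n$ as a continuous $C(Z)$-algebra with $\mathcal{E}$-stable fibers and invoke Hirshberg--R{\o}rdam--Winter style results, but that approach appears to require finite covering dimension of the base space, which is not assumed here, so the Toms--Winter extension argument seems preferable.
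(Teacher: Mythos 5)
Your proposal is correct and is essentially the paper's argument: the paper combines Proposition~\ref{prop_RSH_structure} with Corollary~3.4 of \cite{TW2007strongly} for the inductive limit step and cites Proposition~3.7 of \cite{ABP2020structure} for the $\mathcal{E}$-stability of each recursive subhomogeneous algebra over $D$, and that cited proposition is proved by exactly the stage-by-stage induction via the Toms--Winter extension theorem that you carry out explicitly. The only difference is that you unpack that citation rather than quoting it.
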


\begin{proof}
	Combine Proposition~\ref{prop_RSH_structure},
	\cite[Prop.~3.7]{ABP2020structure} and
	\cite[Cor.~3.4]{TW2007strongly}.
\end{proof}

By Theorem 3.1 of \cite{Winter2011strongly},
any strongly selfabsorbing C*-algebra $\mathcal{E}$ is $\mathcal{Z}$-stable.
We immediately know that
$C^*(\mathbb{Z}, A, \alpha)_{y}$ is $\mathcal{Z}$-stable.

\begin{corollary}\label{Z_stability_of_lar_subalg}
	Let $X$ be an infinite compact metric space,
	and let $D$ be a simple separable unital C*-algebra.
	Let $\mathcal{E}$ be a separable unital strongly selfabsorbing C*-algebra.
	Let $A$ be a locally trivial continuous $C(X)$-algebra
	with typical fiber $D$,.
	Suppose that $h \colon X \to X$ is a minimal homeomorphism and
	$\alpha \in \Aut(A)$ lies over $h$.
	If $D$ is $\mathcal{E}$-stable,
	then for any $y \in X$, the orbit breaking subalgebra
	$C^* (\mathbb{Z}, A, \alpha)_{y}$ is $\mathcal{Z}$-stable.
\end{corollary}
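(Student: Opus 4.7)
The plan is to obtain this as a one-line consequence of the preceding proposition together with Winter's theorem. First I would verify that the hypotheses of the preceding $\mathcal{E}$-stability proposition are satisfied: the hypotheses of Corollary~\ref{Z_stability_of_lar_subalg} are literally the hypotheses of that proposition (a locally trivial $C(X)$-algebra is automatically continuous, as noted earlier in Section~\ref{bundle_section}). Applying the proposition gives that $C^*(\mathbb{Z}, A, \alpha)_{y}$ is $\mathcal{E}$-stable, i.e., there is an isomorphism $C^*(\mathbb{Z}, A, \alpha)_{y} \otimes \mathcal{E} \cong C^*(\mathbb{Z}, A, \alpha)_{y}$.

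Second, I would invoke \cite[Thm.~3.1]{Winter2011strongly}, which asserts that every separable unital strongly selfabsorbing C*-algebra absorbs the Jiang--Su algebra, so $\mathcal{E} \otimes \mathcal{Z} \cong \mathcal{E}$. Combining the two isomorphisms, I would then compute
\[
C^*(\mathbb{Z}, A, \alpha)_{y} \otimes \mathcal{Z}
\cong C^*(\mathbb{Z}, A, \alpha)_{y} \otimes \mathcal{E} \otimes \mathcal{Z}
\cong C^*(\mathbb{Z}, A, \alpha)_{y} \otimes \mathcal{E}
\cong C^*(\mathbb{Z}, A, \alpha)_{y},
\]
which is exactly $\mathcal{Z}$-stability of the orbit breaking subalgebra.

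There is no real obstacle here; the work has all been done in the preceding two propositions (the recursive subhomogeneous structure in Proposition~\ref{prop_RSH_structure} and the permanence of $\mathcal{E}$-stability through that structure), and this corollary is merely the specialization $\mathcal{Z} \precsim \mathcal{E}$ made explicit.
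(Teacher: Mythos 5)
Your proposal is correct and is essentially identical to the paper's own argument: the paper likewise deduces the corollary immediately from the preceding $\mathcal{E}$-stability proposition together with Winter's theorem that every strongly selfabsorbing $\mathcal{E}$ satisfies $\mathcal{E} \otimes \mathcal{Z} \cong \mathcal{E}$. The explicit chain of isomorphisms you write out is exactly the implicit content of the paper's one-line justification.
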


The proof of the theorem below is similar to
the proof of Theorem 4.5 of \cite{ABP2020structure}.

\begin{theorem}\label{Z_stability_of_crossed_product}
	Let $X$ be an infinite compact metric space,
	and let $D$ be a simple separable unital $\mathcal{Z}$-stable C*-algebra
	with nonempty tracial state space.
	Let $A$ be a locally trivial $C(X)$-algebra with typical fiber $D$.
	Suppose that $h \colon X \to X$ is a minimal homeomorphism
	and $\alpha \in \Aut(A)$ lies over $h$.
	Then $C^*(\mathbb{Z}, A, \alpha)$ is tracially $\mathcal{Z}$-absorbing.
	If, in addition, $A$ is nuclear,
	then $C^{\ast}(\mathbb{Z}, A, \alpha)$ is $\mathcal{Z}$-stable.
\end{theorem}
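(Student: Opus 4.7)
The plan is to reduce the problem to the $\mathcal{Z}$-stability of the orbit breaking subalgebra via a large subalgebra argument, then promote tracial $\mathcal{Z}$-absorption to $\mathcal{Z}$-stability in the nuclear setting. The first observation is that, because $D$ is simple, separable, unital and $\mathcal{Z}$-stable, $D$ has strict comparison of positive elements, so by \cite[Lem.~1.12]{ABP2020structure} the full automorphism group $\Aut(D)$ is pseudoperiodic. In particular, for every atlas $\mathcal{A}$ of $A$ the family $\{\alpha(\mathcal{A})_{m,x}^{i,j}\}$ lies inside the pseudoperiodic set $\Aut(D)$, so $\alpha$ is automatically pseudoperiodic generated. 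Together with the nonempty tracial state space assumption and Lemma~\ref{lem_finiteness}, this places us squarely in the setting of Theorem~\ref{thm_for_application}.

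Fix $y \in X$ and set $B = C^{\ast}(\mathbb{Z}, A, \alpha)_{y}$. By Theorem~\ref{thm_for_application}, $B$ is a large subalgebra of crossed product type in $C^{\ast}(\mathbb{Z}, A, \alpha)$. Since $C^{\ast}(\mathbb{Z}, A, \alpha)$ is simple (Proposition~\ref{bundle_simple}) and stably finite (Lemma~\ref{lem_finiteness}), \cite[Thm.~3.6]{AP2020permanence} upgrades $B$ to a centrally large subalgebra. On the other hand, Corollary~\ref{Z_stability_of_lar_subalg} applied with $\mathcal{E} = \mathcal{Z}$ shows that $B$ itself is $\mathcal{Z}$-stable, hence tracially $\mathcal{Z}$-absorbing. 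The inheritance result \cite{ABP2018centrally} for centrally large subalgebras in stably finite containing algebras then transfers tracial $\mathcal{Z}$-absorption from $B$ to $C^{\ast}(\mathbb{Z}, A, \alpha)$, yielding the first conclusion.

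For the second conclusion, suppose further that $A$ is nuclear. Since $\mathbb{Z}$ is amenable, $C^{\ast}(\mathbb{Z}, A, \alpha)$ is nuclear. It is also separable, unital, and simple by Proposition~\ref{bundle_simple}. A tracially $\mathcal{Z}$-absorbing simple separable unital nuclear C*-algebra is $\mathcal{Z}$-stable by the Hirshberg--Orovitz theorem, so $C^{\ast}(\mathbb{Z}, A, \alpha)$ is $\mathcal{Z}$-stable. The main obstacle I anticipate is purely bookkeeping: verifying that all the standing hypotheses of Theorem~\ref{thm_for_application} (finiteness, pseudoperiodicity, existence of tracial states, local triviality) propagate correctly from the hypotheses on $D$ and $A$, and that the inheritance theorem from \cite{ABP2018centrally} applies verbatim in the $\mathcal{E} = \mathcal{Z}$ case; no new dynamical input is needed beyond what is already established in the paper.
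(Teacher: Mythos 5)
Your proposal is correct and follows essentially the same route as the paper: strict comparison from $\mathcal{Z}$-stability of $D$ gives pseudoperiodicity of $\Aut(D)$ (hence $\alpha$ is pseudoperiodic generated), Theorem~\ref{thm_for_application} together with stable finiteness and \cite[Thm.~3.6]{AP2020permanence} makes the orbit breaking subalgebra centrally large, Corollary~\ref{Z_stability_of_lar_subalg} gives its $\mathcal{Z}$-stability, and the inheritance theorems of \cite{ABP2018centrally} yield tracial $\mathcal{Z}$-absorption and, in the nuclear case, $\mathcal{Z}$-stability. No gaps.
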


\begin{proof}
	Since $D$ is $\mathcal{Z}$-stable,
	Theorem 3.3 of \cite{HO2013tracially} implies that $D$ has
	strict comparison of positive elements.
	So $\Aut(D)$ is pseudoperiodic by \cite[Lem.~1.12]{ABP2020structure}.
	Using Corollary~\ref{Z_stability_of_lar_subalg},
	we get an orbit breaking subalgebra $B = C^* (\mathbb{Z}, A, \alpha)_{y}$
	which is $\mathcal{Z}$-stable.
	Theorem \ref{thm_for_application} implies that $B$ is a large subalgebra
	of crossed product type in $C^* (\mathbb{Z}, A, \alpha)$.
	Since $C^* (\mathbb{Z}, A, \alpha)$ is stably finite
	by Lemma~\ref{lem_finiteness},
	Theorem 3.6 of \cite{AP2020permanence} implies that $B$ is a
	centrally large subalgebra of $C^* (\mathbb{Z}, A, \alpha)$.
	Therefore $C^* (\mathbb{Z}, A, \alpha)$
	is tracially $\mathcal{Z}$-absorbing,
	by Theorem 3.2 of \cite{ABP2018centrally}.
	If $A$ is nuclear, then $C^* (\mathbb{Z}, A, \alpha)$ is also nuclear
	by \cite[Thm.~4.2.4]{BO2008Cast}.
	So $\mathcal{Z}$-stability of $C^* (\mathbb{Z}, A, \alpha)$
	follows from Theorem 3.3 of \cite{ABP2018centrally}.
\end{proof}

\begin{remark}\label{rek_known_result}
	Note that if $X$ has finite covering dimension,
	then this result is already known by the permenance of
	$\mathcal{Z}$-stability for crossed products by actions
	with finite Rokhlin dimension with commuting towers.
	Let $\tilde{h} \colon f \mapsto f \circ h^{-1}$ be the induced action on $C(X)$.
	Then $\tilde{h}$ has finite Rokhlin dimension
	with commuting towers by \cite[Thm.~6.1]{HWZ2015rokhlin}.
	Recall that the structure map is equivariant
	and its image is in the center of $A$,
	so one can check directly that
	$\dim_{\text{Rok}}^c(A, \alpha) \leq \dim_{\text{Rok}}^c(C(X), \tilde{h})$.
	Now since $A$ is a separable recursive subhomogeneous algebra over $D$,
	$A$ is $\mathcal{Z}$-stable by Proposition~3.7 of \cite{ABP2020structure}.
	Then one gets $\mathcal{Z}$-stability of $C^* (\mathbb{Z}, A, \alpha)$
	by Theorem 5.8 of \cite{HWZ2015rokhlin}.
\end{remark}

\begin{proposition}\label{prop_larsub_sr1}
	Let $X$ be an infinite compact metric space
	with covering dimension at most $1$.
	Let $D$ be a simple separable unital C*-algebra with
	$\mathrm{tsr}(D) = 1$, $\mathrm{RR}(D) = 0$ and $K_1(D) = 0$.
	Let $A$ be a locally trivial $C(X)$-algebra with typical fiber $D$.
	Suppose that $h \colon X \to X$ is a minimal homeomorphism
	and $\alpha \in \Aut(A)$ lies over $h$.
	Then for any $y \in X$,
	the orbit breaking subalgebra $C^*(\mathbb{Z}, A, \alpha)_{y}$
	has stable rank one.
\end{proposition}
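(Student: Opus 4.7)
The plan is to reduce to a known stable--rank computation for recursive subhomogeneous algebras over $D$ by invoking the RSH structure theorem just proved.

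First I would apply Proposition~\ref{prop_RSH_structure} to write
\[
C^{\ast}(\mathbb{Z}, A, \alpha)_{y} \;\cong\; \varinjlim_n R_n ,
\]
where each $R_n = C^{\ast}(\mathbb{Z}, A, \alpha)_{Y_n}$ is a separable recursive subhomogeneous algebra over $D$ in the sense of Theorem~\ref{thm_RSH}, associated to a closed set $Y_n \subset X$ with nonempty interior. By Theorem~\ref{thm_RSH} the topological dimension of each RSH decomposition equals $\dim(X)\leq 1$, so every base space $X_k$ appearing as a stratum of any $R_n$ satisfies $\dim(X_k)\leq 1$, and the corresponding building block is $C(X_k, M_{n(k)}(D))$.

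Next I would show that each $R_n$ has stable rank one. The starting observation is that every building block $C(X_k, M_{n(k)}(D))\cong M_{n(k)}\otimes C(X_k, D)$ has stable rank one: with $\dim(X_k)\leq 1$, the hypotheses $\mathrm{tsr}(D)=1$, $\mathrm{RR}(D)=0$ and $K_1(D)=0$ allow one to approximate any continuous $D$--valued function on the $1$--skeleton $X_k$ by an invertible, since the obstruction to lifting unitaries across the $1$--cells lives in $K_1(D)$ and the local perturbation across $0$--cells is handled by real rank zero together with $\mathrm{tsr}=1$; this is the standard ``$\dim X\leq 1$ plus $K_1(D)=0$ plus $\mathrm{RR}(D)=0$'' tsr--one criterion used in \cite{LP2010crossed}. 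This property then propagates through the pullback construction
\[
R^{(k)} \;=\; R^{(k-1)} \oplus_{\varphi_k,\rho_k} C(X_k, M_{n(k)}(D))
\]
by induction on the length $l$ of the RSH decomposition, using the surjectivity of the restriction map $\rho_k$: a preimage of an invertible in $C(X_k, M_{n(k)}(D))$ that matches a given invertible in $R^{(k-1)}$ along $X_k^{(0)}$ can be built, up to arbitrary norm perturbation, precisely because both stable rank one and vanishing $K_1$ allow one to homotope inside the image of $\rho_k$. The analog of this argument for RSH algebras over $D$ is exactly the stable--rank counterpart of Proposition~3.7 of \cite{ABP2020structure}, which was the $\mathcal{Z}$--stability statement used earlier.

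Finally, stable rank one is preserved under direct limits (Rieffel), so $C^{\ast}(\mathbb{Z}, A, \alpha)_{y} = \varinjlim_n R_n$ has stable rank one.

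The main obstacle is verifying the inductive step in the RSH pullback, because a naive application of the pullback formula $\mathrm{tsr}(A\oplus_C B)\leq \max(\mathrm{tsr}(A),\mathrm{tsr}(B),\mathrm{tsr}(C))+1$ only yields tsr at most $2$. To get down to tsr $1$ one must genuinely exploit $K_1(D)=0$, $\mathrm{RR}(D)=0$ and the surjectivity of $\rho_k$ to lift approximate invertibles without any dimension increment; this is where I expect most of the technical work to be concentrated, and where invoking the analog in \cite{ABP2020structure} (rather than reproving it) would be the cleanest route.
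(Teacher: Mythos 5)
Your proposal is correct and follows essentially the same route as the paper: the paper's proof simply defers to Proposition~4.13 of \cite{ABP2020structure}, which is exactly the argument you reconstruct --- write $C^*(\mathbb{Z},A,\alpha)_y$ as a direct limit of recursive subhomogeneous algebras over $D$ of topological dimension at most $1$ (here via Proposition~\ref{prop_RSH_structure} and Theorem~\ref{thm_RSH}), apply the stable-rank-one result for such RSH algebras under the hypotheses $\mathrm{tsr}(D)=1$, $\mathrm{RR}(D)=0$, $K_1(D)=0$, and conclude by permanence of stable rank one under direct limits. You also correctly locate the genuine technical content (lifting invertibles through the pullbacks without the $+1$ from the naive tsr estimate) in the cited result of \cite{ABP2020structure} rather than reproving it, which is precisely what the paper does.
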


\begin{proof}
	The proof is similar to the proof of Proposition 4.13 of \cite{ABP2020structure}.
\end{proof}

The proof of the theorem below is similar to the proof of Theorem 4.14 of \cite{ABP2020structure}.

\begin{theorem}\label{thm_sr1}
Let $X$ be an infinite compact metric space
with covering dimension at most $1$.
Let $D$ be a simple separable unital C*-algebra
such that $\mathrm{T}(D) \ne\varnothing$,
$\mathrm{tsr}(D) = 1$,
$\mathrm{RR}(D) = 0$ and $K_1(D) = 0$.
Let $A$ be a locally trivial $C(X)$-algebra with typical fiber $D$.
Let $h \colon X \to X$ be a minimal homeomorphism
and let $\alpha \in \Aut(A)$ lie over $h$.
Suppose that $\alpha$ is pseudoperiodic generated
in the sense of Definition~\ref{def_ppg}.
Then $C^* (\mathbb{Z}, A, \alpha)$ has stable rank one.
\end{theorem}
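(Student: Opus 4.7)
The plan is to follow the strategy used for Theorem~\ref{Z_stability_of_crossed_product}, substituting stable rank one for tracial $\mathcal{Z}$-absorption at the appropriate steps.

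First, I want to verify that Theorem~\ref{thm_for_application} applies. The C*-algebra $D$ has a tracial state (since $\mathrm{T}(D) \ne \varnothing$), $A$ is locally trivial with typical fiber $D$, $h$ is minimal, $\alpha$ lies over $h$, and $\alpha$ is pseudoperiodic generated by assumption. Therefore, for any choice of $y \in X$, the orbit breaking subalgebra $B = C^{\ast}(\mathbb{Z}, A, \alpha)_y$ is a large subalgebra of crossed product type in $C^{\ast}(\mathbb{Z}, A, \alpha)$.

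Next, I would apply Proposition~\ref{prop_larsub_sr1}: since $X$ has covering dimension at most $1$ and $D$ satisfies $\mathrm{tsr}(D) = 1$, $\mathrm{RR}(D) = 0$ and $K_1(D) = 0$, the orbit breaking subalgebra $B$ has stable rank one. At this point I have a large subalgebra $B \subset C^{\ast}(\mathbb{Z}, A, \alpha)$ of crossed product type with $\mathrm{tsr}(B) = 1$, and I want to pass stable rank one up to the containing algebra.

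To do so, I first upgrade ``large of crossed product type'' to ``centrally large''. By Lemma~\ref{lem_finiteness}, $C^{\ast}(\mathbb{Z}, A, \alpha)$ has a tracial state and is stably finite. Theorem~3.6 of \cite{AP2020permanence} then guarantees that every large subalgebra of crossed product type in a stably finite C*-algebra is centrally large; hence $B$ is centrally large in $C^{\ast}(\mathbb{Z}, A, \alpha)$. Finally, the main permanence result of \cite{AP2020permanence} states that if $B$ is a centrally large subalgebra of a simple unital C*-algebra and $\mathrm{tsr}(B) = 1$, then the containing algebra also has stable rank one; simplicity of $C^{\ast}(\mathbb{Z}, A, \alpha)$ is provided by Proposition~\ref{bundle_simple}. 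Combining these conclusions yields $\mathrm{tsr}(C^{\ast}(\mathbb{Z}, A, \alpha)) = 1$.

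The only step requiring genuine work here is Proposition~\ref{prop_larsub_sr1}, which has already been handled in the paper by mimicking Proposition~4.13 of \cite{ABP2020structure} using the recursive subhomogeneous structure of $B$ from Proposition~\ref{prop_RSH_structure}; everything else is bookkeeping and invocation of known permanence results. The main conceptual point I would stress in writing the proof is that the dimension assumption $\dim(X) \le 1$ is used only through $B$ (whose topological dimension as an RSH algebra over $D$ equals $\dim(X)$), while the assumptions on $D$ are exactly what is needed to control the RSH pieces $C(Y_k, M_{n(k)}(D))$; the pseudoperiodic generated hypothesis is used solely to make Theorem~\ref{thm_for_application} applicable.
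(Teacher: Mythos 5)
Your proof is correct and follows essentially the same route as the paper: Proposition~\ref{prop_larsub_sr1} gives $\mathrm{tsr}(B)=1$, largeness of crossed product type plus stable finiteness (Lemma~\ref{lem_finiteness} and Theorem~3.6 of \cite{AP2020permanence}) upgrades $B$ to centrally large, and Theorem~5.3 of \cite{AP2020permanence} passes stable rank one to $C^*(\mathbb{Z},A,\alpha)$. In fact you cite the right result (Theorem~\ref{thm_for_application}) for largeness, where the paper's proof mistakenly points to Proposition~\ref{prop_larsubalg_of_mapping_torus}, which concerns mapping tori only.
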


\begin{proof}
	Using Proposition~\ref{prop_larsub_sr1},
	we get an orbit breaking subalgebra $B$ of the form
	$C^* (\mathbb{Z}, A, \alpha)_y$ for some $y \in X$
	which has stable rank one.
	Proposition \ref{prop_larsubalg_of_mapping_torus} implies that
	$B$ is a large subalgebra of crossed product type
	of $C^* (\mathbb{Z}, A, \alpha)$.
	Since $C^* (\mathbb{Z}, A, \alpha)$ is stably finite
	by Lemma~\ref{lem_finiteness},
	Theorem 3.6 of \cite{AP2020permanence} implies that $B$ is a
	centrally large subalgebra of $C^* (\mathbb{Z}, A, \alpha)$.
	Therefore $C^* (\mathbb{Z}, A, \alpha)$ has stable rank one
	by Theorem 5.3 of \cite{AP2020permanence}.
\end{proof}

\begin{lemma}\label{nuc_mapping_torus}
	Let $D$ be a simple unital C*-algebra. 
	Let $\gamma \in \Aut(D)$ and let $M_{\gamma}$
	be as in Lemma~\ref{mapping_torus_locally_trivial}.
	If $D$ is nuclear, then $M_{\gamma}$ is also nuclear.
\end{lemma}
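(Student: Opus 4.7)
The plan is to exhibit $M_{\gamma}$ as an extension of two nuclear C*-algebras and invoke the classical fact that nuclearity is preserved under extensions. The natural choice is to use evaluation at one endpoint.

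Specifically, I would consider the $*$-homomorphism $\mathrm{ev}_0 \colon M_{\gamma} \to D$ sending $a$ to $a(0)$. Surjectivity follows from the explicit lift $d \mapsto \big(t \mapsto (1-t)d + t \gamma(d)\big)$, which manifestly lies in $M_{\gamma}$. If $a \in \ker(\mathrm{ev}_0)$, then the boundary condition $a(1) = \gamma(a(0)) = \gamma(0) = 0$ forces $a$ to vanish at both endpoints, so
\[
\ker(\mathrm{ev}_0) = \{a \in C([0,1], D) \colon a(0) = a(1) = 0\} \cong C_0((0,1)) \otimes D.
\]
This yields the short exact sequence
\[
0 \longrightarrow C_0((0,1)) \otimes D \longrightarrow M_{\gamma} \xrightarrow{\;\mathrm{ev}_0\;} D \longrightarrow 0.
\]

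The ideal is nuclear because $C_0((0,1))$ is commutative hence nuclear, and the minimal tensor product of nuclear C*-algebras is nuclear; the quotient $D$ is nuclear by hypothesis. Applying the classical permanence result that an extension of a nuclear C*-algebra by a nuclear C*-algebra is nuclear (see, e.g., \cite[IV.3.1.13]{Blackadar2006Theory}), we conclude that $M_{\gamma}$ is nuclear.

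There is no real obstacle to this argument; the only point that requires any verification is the identification of the kernel, which is immediate from the defining relation of the mapping torus. An alternative, equally short route would be to apply Lemma~\ref{lem_pullback_mapping_torus}, realizing $M_{\gamma}$ as a pullback of $C(K_1, D)$ and $C(K_2, D)$ along a surjective restriction map, and invoke the permanence of nuclearity under such pullbacks; however, the short exact sequence approach is marginally more elementary and self-contained.
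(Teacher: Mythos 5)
Your proof is correct and follows essentially the same route as the paper: both realize $M_{\gamma}$ as an extension of $D$ by $C_0(S^1 \setminus \{1\}) \otimes D \cong C_0((0,1)) \otimes D$ and invoke permanence of nuclearity under extensions. Your version merely adds the explicit verification of surjectivity of $\mathrm{ev}_0$ and the identification of the kernel, which the paper leaves implicit.
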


\begin{proof}
	There is a short exact sequence \[
	0 \to C_0(S^1 \setminus \{1\}) \otimes D \to M_{\gamma}
	\to D \to 0 .
	\]
	The ideal and quotient are nuclear,
	and it is known that nuclearity is preserved by extensions.
\end{proof}

In Example \ref{example_1}, the crossed product
$C^*(\mathbb{Z}, M_{\gamma}, \alpha)$ is $\mathcal{Z}$-stable by
Theorem~\ref{Z_stability_of_crossed_product} and Lemma~\ref{nuc_mapping_torus}.
As pointed out in Remark~\ref{rek_known_result},
this result can be obtained by using known theorems involving Rokhlin dimension.
We also get stable rank one by Theorem~\ref{thm_sr1};
this result is also known by
\cite[Thm.~6.7]{Rordam2004stable} (see also \cite[Sect.~8]{LN2020stable})
if we admit the $\mathcal{Z}$-stability of $C^*(\mathbb{Z}, M_{\gamma}, \alpha)$.
In Example \ref{example_2},
however,
we do not know whether the corresponding crossed product
$C^* (\mathbb{Z}, M_{\gamma}, \alpha)$ is tracially $\mathcal{Z}$-absorbing,
since $C^{*}_{\mathrm{r}}(F_2)$ is neither nuclear nor $\mathcal{Z}$-stable
(see \cite[Sect.~2]{GJS2000obstructions}).

\appendix{}

\section*{Acknowledgments} The first author was supported by a grant from
the National Natural Sciences Foundation of China (No.11871375).
The second author was supported by the US National Science Foundation under Grant DMS-2055771.

\bibliographystyle{plain}
\bibliography{larsubalg_bundle}

\end{document}